\newtheorem{theorem}{Theorem}[section]
\newtheorem{prop}[theorem]{Proposition}
\newtheorem{lemma}[theorem]{Lemma}
\newtheorem{corollary}[theorem]{Corollary}
\DeclareMathOperator*{\argmin}{arg\,min}
\newcommand{\R}{{\mathbb R}}
\newcommand{\Z}{{\mathbb Z}}
\renewcommand{\v}{{\mathbf v}}
\newcommand{\w}{{\mathbf w}}
\renewcommand{\u}{{\mathbf u}}
\renewcommand{\l}{{\ell}}
\DeclarePairedDelimiter{\ceil}{\lceil}{\rceil}
\begin{document}

\title{Approximating geodesics via random points}
\author{Erik Davis and
Sunder Sethuraman}
\date{}

\address{\noindent Department of Mathematics, University of Arizona,
  Tucson, AZ  85721
\newline
e-mail:  \rm \texttt{edavis@math.arizona.edu}
}

\address{\noindent Department of Mathematics, University of Arizona,
  Tucson, AZ  85721
\newline
e-mail:  \rm \texttt{sethuram@math.arizona.edu}
}

\begin{abstract}
  Given a `cost' functional $F$ on paths $\gamma$ in a domain $D\subset\mathbb{R}^d$, in the form $F(\gamma) = \int_0^1 f(\gamma(t),\dot\gamma(t))dt$, it is of interest
to approximate its 
  minimum cost and geodesic paths. 
  Let $X_1,\ldots, X_n$ be points drawn independently from $D$ according to a distribution with a density.   Form a random geometric graph on the points where $X_i$ and $X_j$ are connected when $0<|X_i - X_j|<\epsilon$, and the length scale $\epsilon=\epsilon_n$ vanishes at a suitable rate.   
	
	For a general class of functionals $F$, associated to Finsler and other distances on $D$, using a probabilistic form of Gamma convergence, we show
	that the minimum costs and geodesic paths, with respect to types of approximating discrete `cost' functionals, built from the random geometric graph, converge almost surely in various senses to those corresponding to the continuum cost $F$, as the number of sample points diverges.  In particular, the geodesic path convergence shown appears to be among the first results of its kind.
\end{abstract}

\subjclass[2010]{60D05, 58E10, 62-07, 49J55, 49J45, 53C22, 05C82}

\keywords{ geodesic, shortest path, distance, consistency, random
  geometric graph, Gamma convergence, scaling limit, Finsler }

\maketitle

\section{Introduction}
Understanding the `shortest' or geodesic paths between points in a medium
is an intrinsic concern in diverse applied problems, from `optimal routing' in networks and disordered materials
to `identifying manifold structure in large data sets', as well as in studies of probabilistic $\Z^d$-percolation models, since the seminal paper of  
\cite{hammersley}
(cf. recent survey \cite{Auffinger}).  See also \cite{Hirsch}, \cite{Howard_Newman}, \cite{Howard}, \cite{Hwang}, \cite{lagatta0}, \cite{lagatta} which consider percolation in $\mathbb{R}^d$ continuum settings.

There are sometimes abstract formulas for the geodesics, from the calculus of variations, or other differential equation approaches. 
For instance, with respect to a patch of a Riemannian manifold $(M,g)$, with $M\subset \R^d$ and tensor field $g(\cdot)$, it is known that the distance function $U(\cdot)= d(x,\cdot)$, for fixed $x$, is a viscosity solution of the  Eikonal equation $\|\nabla U(y)\|_{g(y)^{-1}}=1$ for $y\neq x$, with boundary condition $U(x)=0$.  Here, $\|v\|_{A}=\sqrt{\langle v,Av\rangle}$, where $\langle\cdot,\cdot\rangle$ is the standard innerproduct on $\R^d$.
Then, a geodesic $\gamma$ connecting $x$ and $z$ may be recovered from $U$ by solving a `descent' equation, $\dot\gamma(t) = -\eta(t)g^{-1}(\gamma(t))\nabla U(\gamma(t))$, where $\eta(t)$ is a scalar function controlling the speed.  

On the other hand, computing numerically the distances and geodesics may be a complicated issue.
One of the standard approaches is the `fast marching method'
to
approximate the distance $U$,
by solving the Eikonal equation on a regular grid of $n$ points.
This method has been extended in a variety of ways, including with respect to
triangulated domains, 
as well as 
irregular samples  $\{x_1,\ldots,x_n\}$ of an Euclidean submanifold (cf. \cite{sethian1999fast}, \cite{memoli2005distance}). See also \cite{peyre2010geodesic} in the above contexts for a review. 

Alternatively,
variants of Dijkstra's or `heat flow' methods, on graphs approximating the space are sometimes used.  In Dijkstra's algorithm, distances and shortest paths are found by successively computing optimal routes to nearest-neighbor edges.  In `heat flow' methods, geodesic distances can be found in terms of the small time asymptotics of a heat kernel on the space.  
For instance, see \cite{Cabello_unitdisks}, \cite{Crane_geodesics_in_heat}, \cite{Giesen_Wagner}, \cite{Glickenstein}, \cite{Yu_geodesics_computation}.

Another
idea
has been to collect a random sample $\mathcal{X}_n$ of $n$ points from a manifold embedded in $\R^d$, put a network structure on these points, say in terms of a $\epsilon$-random geometric or $k$-nearest neighbor graph, and then approximate the `continuum' geodesics 
lengths 
by 
lengths of 
`discrete' geodesic paths found in this network. 
Presumably, under assumptions on how the points are sampled and how the random graphs are formed, as the number of points diverge, these `discrete' distances should converge almost surely to the `continuum' shortest path lengths.  Such a statistical consistency result is fundamental in `manifold learning' \cite{bern}.  For instance, the popular ISOMAP procedure \cite{tenenbaum}, \cite{bernstein2000graph} is based on these notions to elicit manifold structure in data sets.

	More specifically, let $D$ be a subset of $\R^d$ corresponding to a patch of the manifold, and consider a `kernel' $f(x,v):D\times \mathbb{R}^d \rightarrow [0,\infty)$.  Define the $f$-cost of a path $\gamma(t):[0,1]\rightarrow D$ from $\gamma(0)=a$ to $\gamma(1)=b$ as $F(\gamma)=\int_0^1f(\gamma(t),\dot\gamma(t))dt$.  The $f$-distance from $a$ to $b$ is then the infimum of such costs over paths $\gamma$.  For example, if $f(x,v) = |v|^p$, the $f$-distance is $|b-a|^p$, the $p$th power of the Euclidean distance.

With respect to a class of functions $f$ and samples drawn from a distribution on the $D$ with density $\rho$, 
papers \cite{bernstein2000graph}, 
\cite{Sajama}, and 
\cite{alamgir2012shortest} address, among other results, how $\epsilon=\epsilon_n$ and $k=k_n$ should decrease and increase respectively so that various concentration type bounds between types of discrete and continuum optimal distances hold with high probability, leading to consistent estimates.  

For instance, in 
\cite{Sajama}, for $\epsilon_n$-random graphs and smooth $\rho$, certain density dependent estimators of continuum distances were considered, where $f(x,v) = h(\rho(x))|v|$ and $h(y)$ is decreasing, smooth, constant for $|y|$ small, and bounded away from $0$.  This work extends 
\cite{bernstein2000graph}, which considered $f(x,v) = |v|$ and uniformly distributed samples.
On the other hand, in 
\cite{alamgir2012shortest}, among other results, on $k_n$-nearest-neighbor graphs, continuum distances, where $f(x,v)= h(\rho(x))|v|$ and $h$ is increasing, 
Lipschitz, 
and bounded away from $0$, were approximated (see also \cite{Hashimoto}).

  In these contexts, the purpose of this article is twofold.  First, we identify a general class of $f$-distances for which different associated discrete distances, formed from random $\epsilon_n$-random geometric graphs on a domain $D\subset \R^d$, converge almost surely to them.  
		Second, we describe when the associated discrete geodesic paths converge almost surely, in uniform and Hausdorff norms, to continuum $f$-distance geodesic paths, a type of consistency which appears to be among the first contributions of this kind.
		The main results are Theorems \ref{theorem1}, \ref{theorem2}, \ref{p>1approxthm}, \ref{p=1approxthm}, \ref{linear_p=1approxthm}, and Corollary \ref{hausdorffcor}.
			   See Section \ref{results} and Subsection \ref{remarks} for precise statements and related remarks.

		We consider the following three different discrete costs.  The first, $d_1$, optimizes on paths $\gamma$, starting and ending at $a$ and $b$ respectively, linearly interpolated between points in $\mathcal{X}_n \cup \{a,b\}$, where consecutive points are within $\epsilon_n$ of each other, and the time to traverse each link is the same.  The second, $d_2$, optimizes with respect to `quasinormal' interpolations between the points, using however the $f$-geodesic paths.  The third, $d_3$, does not interpolate at all, and optimizes a `Riemann sum' cost $\frac{1}{m}\sum_{i=0}^{m-1} f(v_i,m(v_{i+1}-v_i))$ where $m$ is the number of edges in the discrete path $\{v_0,\ldots, v_{m}\} \subset \mathcal{X}_n\cup\{a,b\}$.  We note, discrete distances $d_2$ and $d_3$, in the setting $f(x,v)= |v|$ were introduced in \cite{bernstein2000graph}, and density dependent versions were used in the results in \cite{alamgir2012shortest} and \cite{Sajama}.  The discrete distance $d_1$, although natural, seems not well considered in the literature.

	The conditions we impose on $f$ include $p$-homogeneity in $v$ for $p\geq 1$, convexity and an ellipticity condition with respect to $v$, and a smoothness assumption away from $v\neq 0$.  Such conditions include a large class of kernels $f$ associated to Finsler spaces, as well as those kernels considered in \cite{Sajama} and \cite{bernstein2000graph}, with respect to $\epsilon_n$-random graphs.  The domain $D\subset\mathbb{R}^d$ is assumed to be bounded and convex. Also, we assume that the rate of decrease of $\epsilon_n$ is such that the graph on $\mathcal{X}_n$ is connected for all large $n$.

	While a main contribution of the article is to provide a general setting in which the `discrete to continuum' convergences hold,
  we 
	remark our proof method is quite different from that in the literature, where specific features of $f$, such as $f(x,v)= |v|$ in \cite{bernstein2000graph}, are important in estimation of distances, not easily generalized.
	We give a probabilistic form of `Gamma convergence' to derive the almost sure limits, which may be of interest itself.   
	This method involves showing `liminf', `limsup' and `compactness' elements, as in the analysis context, but here on appropriate probability $1$ sets.  Part of the output of the technique, beyond giving convergence of the distances, is that it yields convergence of the minimizing discrete paths to continuum geodesics in various senses.

	The $f$-costs share different properties depending on if $p=1$ or $p>1$, and also when $d\geq 2$ or $d=1$.
	For instance, the $f$-cost is invariant to reparametrization of the path exactly when $p=1$.  Also, when $p>1$, the form of the $f$-cost may be seen to be coercive on the modulus of $\gamma$, not the case when $p=1$.  In fact, the $p=1$ case is the most troublesome, and more assumptions on $f$ and $\epsilon_n$ are required in Theorems \ref{p=1approxthm} and \ref{linear_p=1approxthm} to deal with the `linear' path cost $d_1$ and `Riemann' cost $d_3$, which are `rougher' than the `quasinormal' cost  $d_2$.
	  
	  At the same time, in $d=1$, in contrast to $d\geq 2$, all paths lie in the interval $[a,b]\subset\mathbb{R}$.  When also $p=1$, the problem is somewhat degenerate:  By invariance to reparametrization, the costs $d_1$ and $d_2$ turn out to be nonrandom and to reduce to the integral $\int_a^b f(s, 1)ds$.  Also, the cost $d_3$ is a Riemann sum which converges to this integral.

Finally, we comment on a difference in viewpoint with respect to results in continuum percolation.  The `Riemann sum' cost considered here seems related to but is different than the cost optimized in the works 
\cite{Howard_Newman}, \cite{Howard}.  There, for $p>1$, one optimizes the cost of a path $\{w_0,\ldots, w_m\}$, along random points, from the origin $0$ to $nx$, for $x\in \R^d$, given by $\sum_{i=0}^{m-1} |w_{i+1}-w_i|^p$, and infers a scaled distance $d(x)= c(d,p)|x|$, in law of large numbers scale $n$, where the proportionality constant $c(d,p)$ is not explicit. In contrast, however, in this article, given already an integral $f$-distance, the viewpoint is to optimize costs of paths of length order $1$ (not $n$ as in \cite{Howard_Newman}, \cite{Howard}), where the length scale between points is being scaled of order $\epsilon_n$, and then to recover the $f$-distance in the limit.  We note also another difference: When $f(x,v)= |v|^p$, as remarked above, the $f$-distance from the origin to $x$ is $|x|^p$, instead of $\sim |x|$ as in the continuum percolation studies.

In Section \ref{results}, the setting, assumptions and results are given with respect to three types of discrete costs.  In Section \ref{interpolating_proofs}, proofs of Theorems \ref{theorem1} and \ref{theorem2} and Corollary \ref{hausdorffcor} on the `interpolating' costs are given.  In Section \ref{riemann_proofs}, proofs of Theorems \ref{p>1approxthm}, \ref{p=1approxthm}, and \ref{linear_p=1approxthm}, with respect to `Riemann' costs, and `interpolating' costs when $p=1$, are given.
In Section \ref{appendix}, some technical results, used in the course of the main proofs, are collected.

\section{Setting and Results}
\label{results}
\begin{figure}
  \begin{subfigure}{0.48\textwidth}
    \includegraphics[width=\linewidth]{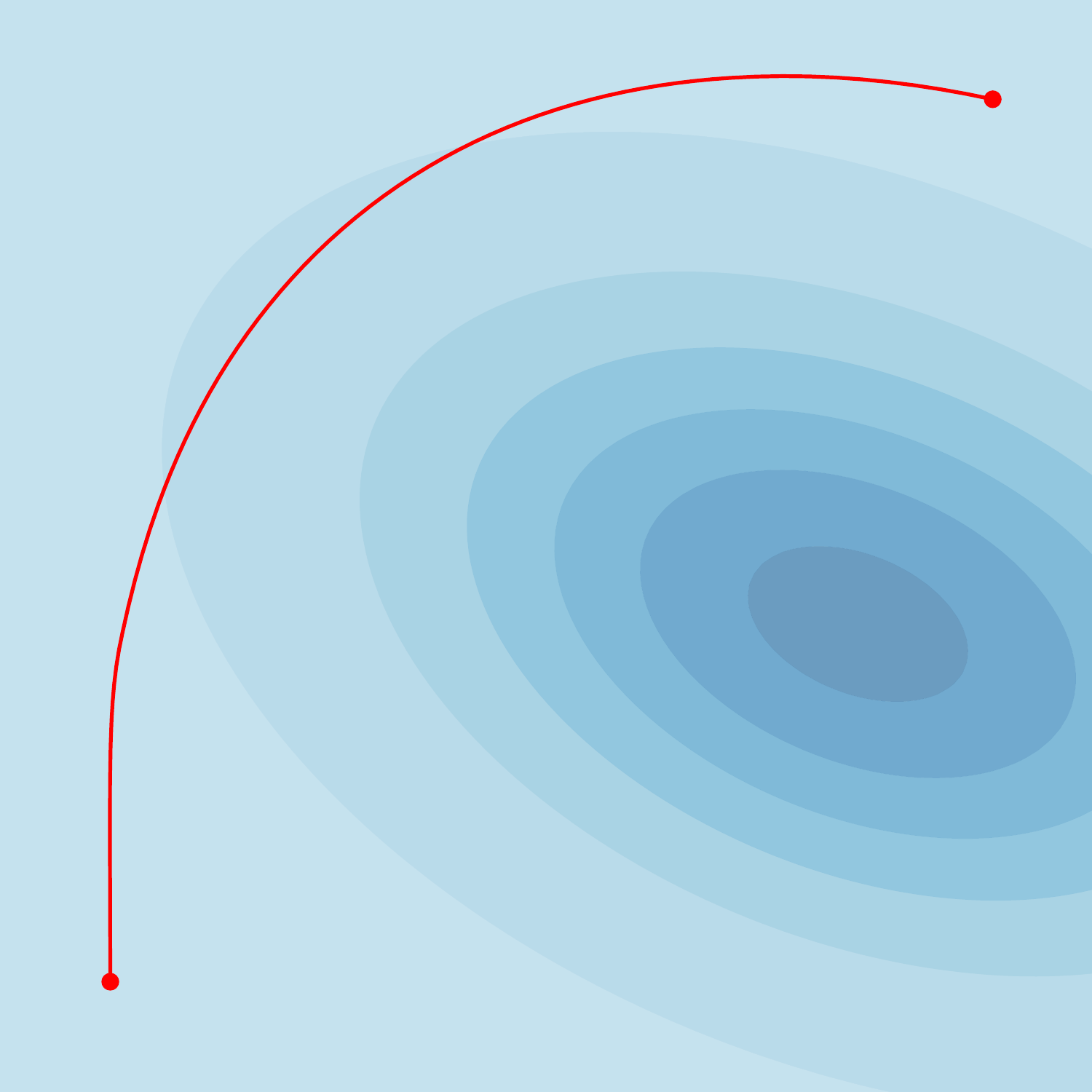}
    \caption{$F$-minimizing path, with level sets of $w$ indicated.}
    \label{fig:geodesic}
  \end{subfigure}
  \begin{subfigure}{0.48\textwidth}
    \includegraphics[trim={0.8cm 0.8cm 0.65cm 0.35cm}, width=\linewidth]{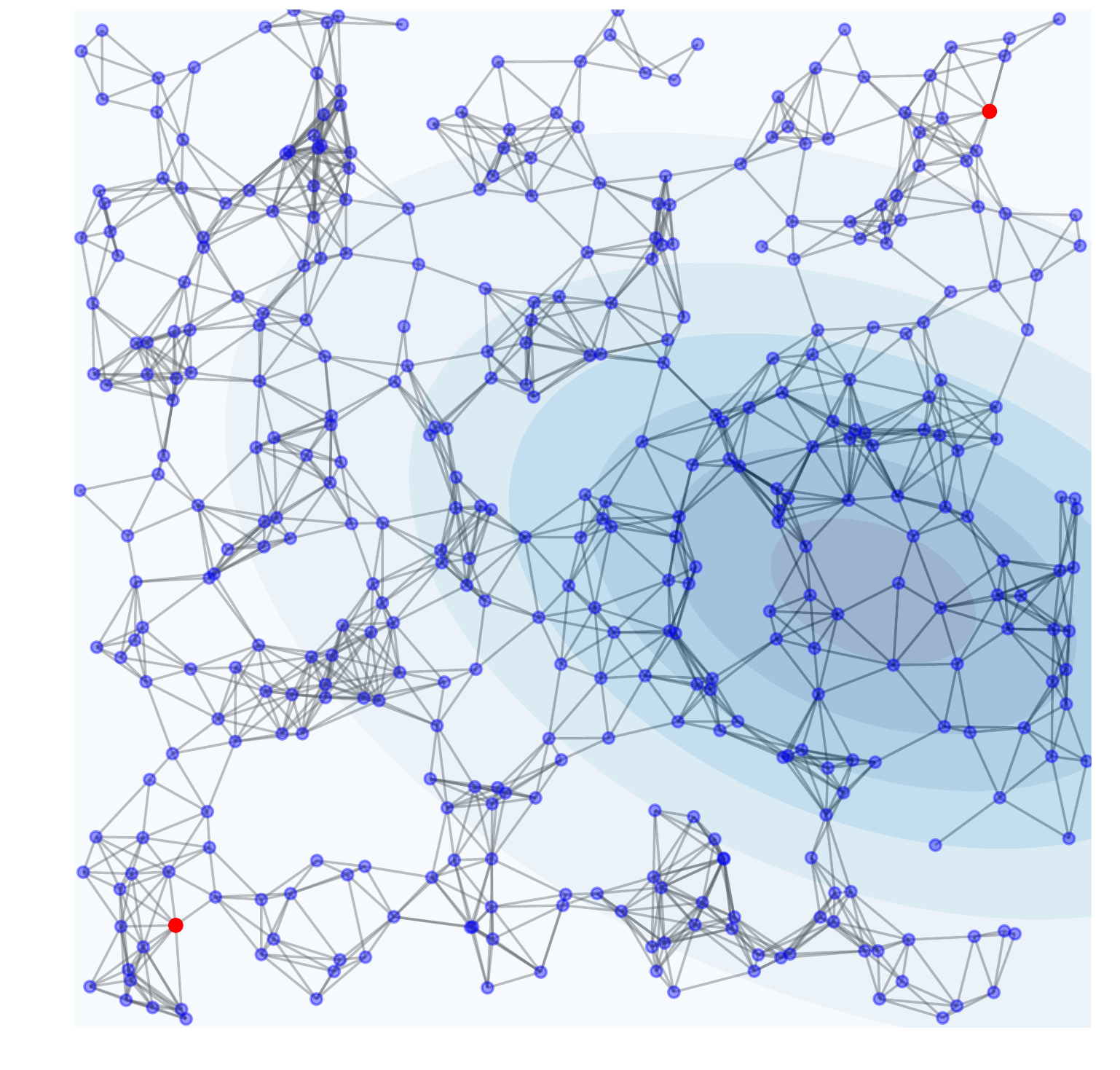}
    \caption{$\epsilon_n$-graph, on $n=400$ uniform points with $\epsilon_n = (1/400)^{0.3}$}
    \label{fig:rgg}
  \end{subfigure}
  \caption{Continuum geodesic and $\epsilon_n$-graph for $f(x,v) = w(x)|v|$ on the domain  $D = [-1,1] \times [-1,1]$, where $w(x) = 1 + 8 \exp(-2 (x_1 - 1/2)^2 + xy + 2y^2)$, and $a = (-0.8,-0.8)$, $b = (0.8,0.8)$}
  \label{fig1}\end{figure}

We now introduce the setting of the problem, and `standing assumptions', which hold throughout the article.

For $d\geq 1$, we will be working on a subset $D\subset\R^d$, 
\begin{equation}
\label{domain_assumption}
{\rm which \ is \ the \ closure \ of \ an \ open, \ bounded, \ convex \ domain.}
\end{equation} 
Therefore, $D$ is a Lipschitz domain (cf. Corollary 9.1.2 in \cite{Agronowich}, Section 1.1.8 in \cite{Mazya}).

Consider
points $a,b \in D$ and let $\Omega(a,b)$ denote the space of Lipschitz
paths $\gamma: [0,1] \to D$ with $\gamma(0) = a$ and $\gamma(1) =
b$. Given $f : D \times \mathbb{R}^d \to [0,\infty)$, we define the
cost $F : \Omega(a,b) \to [0,\infty)$ by
\begin{equation*} 
F(\gamma) = \int_0^1 f(\gamma(t),\dot{\gamma}(t)) \, dt,
\end{equation*}
and associated optimal cost
\begin{equation} \label{dfdef}
d_f(a,b) = \inf_{\gamma \in \Omega(a,b)} F(\gamma).
\end{equation}

We will make the following assumptions on the integrand $f$:
\begin{itemize}
\item[ (A0)] $f$ is continuous on $D\times \mathbb{R}^d$, and $C^1$ on
$D \times (\mathbb{R}^d \setminus \{0\})$,
\item[(A1)] $f(x,v)$ is convex in $v$,
\item[(A2)]
there exists $p \geq 1$ such that $f(x,v)$ is $p$-homogenous in $v$,
\begin{equation} \label{fhomo}
  f(x,\lambda v) = \lambda^p f(x,v) \text{ for } \lambda > 0,
\end{equation}
\item[(A3)] there exist constants $m_1,m_2 > 0$ such that
\begin{equation} \label{fbounds} m_1|v|^p \leq f(x,v) \leq m_2 |v|^p
  \text{ for all } x \in D.
\end{equation}
\end{itemize}

We remark, when $p>1$ and $p$-homogenity (A2) holds, that $f$ may be extended to a $C^1$ function on $D\times \mathbb{R}^d$. 

 Part of the reasoning for the assumptions (A0)-(A3) is that they include, for $p\geq 1$, the familiar kernel $f(x,v) = |v|^p$, for which, when $p=1$, $F(\gamma)$ is the arclength of the path $\gamma$ and $d_f(a,b)$ is the length of the line segment from $a$ to $b$.

 Also, under these assumptions on $f$, it is known that the infimum in
\eqref{dfdef} is attained at a path in $\Omega(a,b)$, perhaps nonuniquely (see
Proposition \ref{existence} of the appendix).  
In addition, we remark, when $p=1$, under additional differentiability assumptions, $d_f$ represents a Finsler distance (cf. \cite{Paiva}, \cite{tamassy} and references therein).

When $p=1$, the cost has an interesting scaling property:  By $1$-homogeneity of $f$, the cost $F$
is invariant under smooth reparameterization of
paths. That is, given a path $\gamma \in \Omega(a,b)$ and smooth, increasing
$s : [0,1] \to [0,1]$, with $s(0) = 0$ and $s(1) = 1$, one has
$F(\tilde{\gamma}) = F(\gamma)$ where
$\tilde{\gamma}(t) = \gamma(s(t))$.

This property allows to deduce, when $p=1$, that 
$d_f$ satisfies the triangle property (not guaranteed when $p>1$): Let $\gamma_1$ be a path from $u$ to $w$, and $\gamma_2$ be a path from $w$ to $z$.  Write
\begin{eqnarray}
&&\int_0^1 f(\gamma_1(t),\dot\gamma_1(t))dt +  \int_0^1 f(\gamma_2(t),\dot\gamma_2(t))dt\nonumber\\
&&\ \ \ = \ \int_0^{1/2} f(\gamma_1(2s), 2\dot\gamma_1(2s))ds + \int_0^{1/2} f(\gamma_2(2s), 2\dot\gamma_2(2s))ds\nonumber\\
&&\ \ \ = \ \int_0^1f(\gamma_3(t),\dot\gamma_3(t))dt,
\label{p=1triangle}
\end{eqnarray}
where $\gamma_3$ is a path from $u$ to $z$, following $\gamma_1(2\cdot)$ and $\gamma_2(2\cdot)$ on time intervals $[0,1/2]$ and $[1/2,1]$ respectively.
Optimizing over $\gamma_1$, $\gamma_2$ and $\gamma_3$ gives $d_f(u,w) +d_f(w,z)\geq d_f(u,z)$.

We now construct a random geometric graph on $D$ through which approximations of $d_f$ and its geodesics will be made.  
Let
$\{X_i,X_2,\ldots \}\subset D$ be a sequence of independent points, identically distributed according to a distribution $\nu$ with probability density $\rho$.  For
each $n \in \mathbb{N}$, let $\mathcal{X}_n = \{X_1,\ldots,X_n\}$ and fix a length scale $\epsilon_n > 0$. With
respect to a realization $\{X_i \}$, we define a graph
$\mathcal{G}_n(a,b)$, on the vertex set $\mathcal{X}_n \cup \{a,b\}$,
by connecting an edge between $u,v$ in $\mathcal{X}_n \cup \{a ,b \}$
iff $0<|u - v| < \epsilon_n$, where $|\cdot|$ refers to the Euclidean
distance in $\mathbb{R}^d$. 

For $u,v \in \mathcal{X}_n\cup\{a,b\}$, we say that a finite sequence
$(v_0,v_1,\ldots,v_m)$ of vertices is a {\it path} with $m$-steps
from $u$ to $v$ in $\mathcal{G}_n(a,b)$ if $v_0 = u$, $v_m = v$, and
there is an edge from $v_i$ to $v_{i+1}$ for $0 \leq i < m$. Let
$V_n(a,b)$ denote the set of paths from $a$ to $b$ in
$\mathcal{G}_n(a,b)$.

We will assume a certain decay rate on $\epsilon_n$,
namely that $\lim_{n \uparrow \infty} \epsilon_n = 0$ and
\begin{equation} \label{rateassumption}
  \limsup_{n \to \infty} \frac{(\log n)^{1/d}}{n^{1/d}} \frac{1}{\epsilon_n} = 0.
\end{equation}
Under this type of decay rate, almost surely, for all large $n$ and $a,b\in D$, points $a,b$ will be connected  by a path in the graph $\mathcal{G}_n(a,b)$, in other words, $V_n(a,b)$ will be nonempty.  Indeed, under this rate, the degree of a point in the graph will diverge to infinity. 
 See Proposition \ref{nearestneighbor} in the appendix, and remarks in Section \ref{remarks}.

We will also assume that the underlying probability density $\rho$ is uniformly bounded, that is, there exists a constant $c > 0$ such that 
\begin{equation}
\label{rho_assumption}
c\leq \rho(x) \leq c^{-1} \ \ \ {\rm  for\  all\ } x \in D.
\end{equation}

See Figure \ref{fig1}, parts (a) and (b), which depict a geodesic path with respect to a cost $F$, and an $\epsilon_n$-random graph.

\medskip
{\it `Standing assumptions'.}  To summarize, the assumptions, dimension $d\geq 1$, \eqref{domain_assumption} on $D$, items (A0)-(A3) on $f$ when $p\geq 1$, decay rate \eqref{rateassumption} on $\epsilon_n$, and density bound \eqref{rho_assumption} on $\rho$, denoted as the `standing assumptions', will hold throughout the article.
\medskip

In the next two Subsections, we present results on approximation of $d_f(a,b)$ and its geodesics with respect to two types of schemes, where approximating costs are built (1) in terms of `interpolations' of points in $V_n(a,b)$ and  also (2) in terms of `Riemann sums'.

\subsection{Interpolating costs}

We introduce two types of discrete costs based on `linear' and `quasinormal' paths.

\medskip
{\it Linear interpolations.}
With respect to a realization $\{X_i\}$, 
for $u,v \in D$, let $l_{u,v} : [0,1] \to D$ denote the
constant-speed linear path from $a$ to $b$, given by
\[ l_{u,v}(t) = (1-t)u + tv. \] 
Consider now
$\v=(v_0,v_1,\ldots,v_m) \in V_n(a,b)$. We define
$l_{\v} \in \Omega(a,b)$ to be the concatenation of the
linear segments $\{l_{v_{i-1},v_i}\}_{i=1}^m$, where each segment is traversed in the same time $1/m$.  More precisely, for
$i/m \leq t \leq (i+1)/m$, define
\begin{equation*} \label{concatenation}
  l_{\v}(t) =  l_{v_i,v_{i+1}}(mt-i),
\end{equation*}
and note that the resulting piecewise linear path is in $\Omega(a,b)$.

Define now a subset $\Omega_n^{l}(a,b)$ of $\Omega(a,b)$ by
\begin{equation*}
  \Omega_n^{l}(a,b) = \left\{ l_{\v }\middle| \v
	\in V_n(a,b) \right\},
\end{equation*}
and define the (random) discrete cost $L_n : \Omega_n^l(a,b) \to [0,\infty]$ by
\begin{equation*}
L_n(\gamma) = F(\gamma) \text{ for } \gamma \in \Omega_n^l(a,b).
\end{equation*}
In other words, $L_n$ is the restriction of $F$ to $\Omega_n^l(a,b)$, noting the $p$-homogenity of $f$, taking form
\begin{eqnarray}
\label{L_n_equation}
L_n(l_\v) &=& \sum_{i=0}^{m-1} \int_{i/m}^{(i+1)/m} f(l_{v_i,v_{i+1}}(mt-i),m(v_{i+1}-v_i))dt\nonumber\\
&=& m^{p-1}\sum_{i=0}^{m-1} \int_0^1 f(l_{v_i, v_{i+1}}(t), v_{i+1}-v_i)dt.
\end{eqnarray}

\medskip

{\it Quasinormal interpolations.} Define now a different discrete cost which may nonlinearly interpolate among points in paths of $V_n(a,b)$. 
We say that a Lipschitz path $\gamma$ is quasinormal with respect to $f$ if
there exists a $c > 0$ such that
\begin{equation*} \label{quasinormal}
f(\gamma(t),\dot{\gamma}(t)) = c \text{ for a.e. } t \in [0,1].
\end{equation*}

It is known, under the `standard assumptions' on $f$ (see Proposition \ref{existence}) that, for $u,v \in D$, there exists
a quasinormal path $\gamma : [0,1] \to D$, with $\gamma(0) = u, \gamma(1) = v$, which is optimal, 
$d_f(u,v) = \int_0^1 f(\gamma(t),\dot{\gamma}(t))\, dt$. 
For what follows, when we refer to a `quasinormal' path connecting $u$ and $v$, we mean such a fixed optimal path denoted by $\gamma_{u,v}$.

Given a path $\v=(v_0,\ldots,v_m) \in V_n(a,b)$,
let $\gamma_{\v}\in \Omega(a,b)$ denote the concatenation of
$\{\gamma_{v_{i-1},v_i}\}_{i=1}^m$, where each segment uses the same time $1/m$. More precisely,
for $i/m \leq t \leq (i+1)/m$, define
\begin{equation*} 
  \gamma_{\v}(t) =  \gamma_{v_i,v_{i+1}}(mt-i).
\end{equation*}
As with piecewise linear functions, define the
subset $\Omega_n^{\gamma}(a,b)$ of $\Omega(a,b)$ by
\begin{equation*}
  \Omega_n^{\gamma}(a,b) = \left\{ \gamma_{\v} \middle| \v
	\in V_n(a,b) \right\}.
\end{equation*}
Let $G_n : \Omega_n^{\gamma}(a,b) \to \mathbb{R}$ denote the
restriction of $F$ to $\Omega_n^{\gamma}(a,b)$.

 Then, with respect to a path
$\gamma=\gamma_{\v} \in \Omega_n^{\gamma}(a,b)$, by the
$p$-homogenity of $f$, we evaluate that
\begin{eqnarray}
 \label{G_n_equation}
  G_n(\gamma) &=& \int_0^1 f(\gamma(t),\dot{\gamma}(t))\, dt  \\
	& = & 
                  \sum_{i=1}^{m} \int_{(i-1)/m}^{i/m} f(\gamma_{v_{i-1},v_i}(mt-i),m\dot{\gamma}_{v_{i-1},v_i}(mt-i))\, dt \nonumber\\
              &=& m^{p-1}\sum_{i=1}^{m} \int_0^1 f(\gamma_{v_{i-1},v_i}(t),\dot{\gamma}_{v_{i-1},v_i}(t))\, dt \ =\    m^{p-1}\sum_{i=1}^{m} d_f(v_{i-1},v_i). \nonumber
                 \end{eqnarray}
                Further, by $p$-homogeneity of $f$ and optimality of $\{\gamma_{v_i,v_{i+1}}\}_{i=1}^m$, the segments of $\gamma = \gamma_{\v}$ are also optimal, in the sense that
                \begin{eqnarray}
							&&\int_{i/m}^{(i+1)/m} f(\gamma(t),
                  \dot{\gamma}(t))\, dt =	m^{p-1}\int_0^{1} f(\gamma_{v_i,v_{i+1}}(t), \dot \gamma_{v_i,v_{i+1}}(t))dt \nonumber\\
&&\ \ \ = \inf_{\widetilde{\gamma}}
                  m^{p-1}\int_{0}^{1} f(\widetilde{\gamma}(t),\dot{\widetilde{\gamma}}(t))\, dt = \inf_{\widehat{\gamma}}\int_{i/m}^{(i+1)/m} f(\widehat{\gamma}(t), \dot {\widehat{\gamma}}(t))dt,
								\label{segment_optimal}\end{eqnarray}
                where the infima are over Lipschitz paths $\widetilde{\gamma}:[0,1]\to D$ and $\widehat{\gamma}: [i/m, (i+1)/m] \to D$ with 
			$\widetilde{\gamma}(0)=v_i$, $\widetilde{\gamma}(1)=v_{i+1}$, $\widehat{\gamma}(i/m) = v_i$ and $\widehat{\gamma}((i+1)/m) = v_{i+1}$.
			
\medskip
{\it Relations between $G_n$ and $L_n$.}
At this point, we remark there are kernels $f$ for which $G_n = L_n$, namely those such that linear segments are in fact quasinormal geodesics.  An example is $f(x,v)=|v|$.  Identifying these kernels is a question with a long history, going back to Hilbert, whose 4th problem paraphrased asks for which geometries are the geodesics straight lines (cf. surveys \cite{Paiva}, \cite{papadopoulos}).  			
Hamel's criterion, namely $\partial_{x_i} \partial_{v_j} f = \partial_{x_j}\partial_{v_i} f$ for $1\leq i,j\leq d$, is a well-known solution to this question (see \cite{gelfand}, \cite{Paiva}, \cite{papadopoulos} and references therein).  

We also note, as mentioned in the introduction, that the case $d=p=1$ is `degenerate' in that $\min G_n$ and $\min L_n$ are not random.  Indeed, let $\gamma_{\v}\in \argmin G_n$ and suppose $\v= (v_0,\ldots, v_m)\in V_n(a,b)$.  We observe that $\gamma_\v$ must be nondecreasing, as otherwise, one could build a smaller cost path, from parts of $\gamma_\v$ using invariance to reparametrization, violating optimality of $\gamma_\v$.  In particular, $\dot\gamma_\v\geq 0$ and $v_i<v_{i+1}$ for $0\leq i\leq m-1$.  Then, 
$$ G_n(\gamma_\v) = \sum_{i=0}^{m-1} \int_{i/m}^{(i+1)/m} f(\gamma_\v(t),\dot\gamma_\v(t))dt
= \sum_{i=0}^{m-1} \int_{v_i}^{v_{i+1}} f(s, 1)ds = \int_a^b f(s,1)ds,$$
using the $1$-homogeneity of $f$ and changing variables.  The same argument yields that $\min L_n = \int_a^b f(s,1)ds$.  We do not consider this `degenerate' case further.
\medskip

The first result is for linearly interpolated paths.

\begin{theorem} \label{theorem1} 
Suppose that $p > 1$. With respect to
  realizations $\{ X_i\}$ in a probability $1$ set, the
  following holds. The minimum values of the costs $L_n$ converge
  to the minimum of $F$,
  \[ \lim_{n \to \infty} \min_{\gamma \in \Omega_n^l(a,b)} L_n(\gamma) = \min_{\gamma \in \Omega(a,b)} F(\gamma). \]

  Moreover, consider a sequence of optimal paths $\gamma_n \in \argmin L_n$.  Any subsequence of $\{\gamma_n\}$ has a further subsequence that converges
  uniformly to a limit path
  $\gamma \in \argmin F$,
  \[ \lim_{k \to \infty} \sup_{0 \leq t \leq 1} |\gamma_{n_k}(t) - \gamma(t)| = 0. \]
	
	In addition, if $\gamma$ is the unique minimizer of $F$, then the whole sequence $\gamma_n$ converges uniformly to $\gamma$.

\end{theorem}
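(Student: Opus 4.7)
The plan is a probabilistic Gamma-convergence argument: on a probability-$1$ event in $\{X_i\}$, verify (i) a liminf inequality, (ii) compactness, and (iii) a recovery/limsup inequality. Given these, standard reasoning yields $\min L_n\to\min F$ and subsequential uniform convergence of minimizers to an element of $\argmin F$; uniqueness promotes this to full-sequence convergence by the usual subsequence-of-subsequence argument (if $\gamma_n\not\to\gamma$, a subsequence bounded away from $\gamma$ has a further uniformly convergent subsequence, necessarily to the unique minimizer $\gamma$, a contradiction). For (i), since $\Omega_n^l(a,b)\subset\Omega(a,b)$ and $L_n=F|_{\Omega_n^l(a,b)}$, it suffices to invoke the classical lower semicontinuity of $F$ under uniform convergence of Lipschitz paths, a consequence of (A0)--(A1). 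For (ii), $p>1$ is used crucially: $\sup_n L_n(\gamma_n)<\infty$ gives $\int_0^1|\dot\gamma_n|^p\,dt$ bounded via (A3), and H\"older's inequality yields the uniform estimate
\[ |\gamma_n(t)-\gamma_n(s)|\leq |t-s|^{1-1/p}\bigg(\int_0^1|\dot\gamma_n|^p\,du\bigg)^{1/p}, \]
so that Arzel\`a--Ascoli delivers a uniformly convergent subsequence with limit in $\Omega(a,b)$.

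The heart of the proof is (iii). Given a Lipschitz $\gamma^*\in\argmin F$ (existence from Proposition \ref{existence}), I construct $\tilde\gamma_n\in\Omega_n^l(a,b)$ with $\tilde\gamma_n\to\gamma^*$ uniformly and $L_n(\tilde\gamma_n)\to F(\gamma^*)$. The probabilistic input, made available by \eqref{rateassumption}, \eqref{rho_assumption}, and Proposition \ref{nearestneighbor}, is that almost surely, for all large $n$, every point of $D$ lies within some covering scale $\delta_n\ll\epsilon_n$ of a sample point. Choose a mesh $m_n\to\infty$ with $m_n>2\mathrm{Lip}(\gamma^*)/\epsilon_n$ and pick $v_i^n\in\mathcal X_n$ with $|v_i^n-\gamma^*(i/m_n)|<\delta_n$; then consecutive $v_i^n$ are within $\epsilon_n$, so $\mathbf v^n=(a,v_1^n,\ldots,v_{m_n-1}^n,b)\in V_n(a,b)$ and $l_{\mathbf v^n}\to\gamma^*$ uniformly. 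Applying $p$-homogeneity to \eqref{L_n_equation} rewrites $L_n(l_{\mathbf v^n})$ as $\frac{1}{m_n}\sum_i\int_0^1 f\bigl(l_{v_i^n,v_{i+1}^n}(t),m_n(v_{i+1}^n-v_i^n)\bigr)\,dt$, with a velocity argument bounded uniformly by $\mathrm{Lip}(\gamma^*)+o(1)$. Uniform continuity of $f$ on compact sets, combined with $L^p$ convergence (again using $p>1$) of mesh difference quotients to $\dot\gamma^*$ along the piecewise linear interpolant of $\gamma^*$ on $\{i/m_n\}$, shows this expression tends to $F(\gamma^*)$. Assembling (i)--(iii), for $\gamma_n\in\argmin L_n$ we get $\limsup L_n(\gamma_n)\le \min F$ from comparison with $\tilde\gamma_n$, uniform boundedness triggers (ii) to extract a subsequential limit $\gamma$, and (i) gives $F(\gamma)\le\min F$, forcing $\gamma\in\argmin F$ and $\min L_n\to\min F$.

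The main obstacle is the recovery step, since (i) and (ii) are deterministic and standard. One must jointly calibrate three scales on a probability-$1$ event: the edge length $\epsilon_n$ (given), the covering scale $\delta_n$ of $\mathcal X_n$ in $D$ (determined by \eqref{rateassumption} and \eqref{rho_assumption}, and requiring a standard random-geometric-graph covering estimate sharper than the connectivity statement in Proposition \ref{nearestneighbor}), and the mesh size $m_n$. A secondary but important subtlety is that $\gamma^*$ is only Lipschitz and $\dot\gamma^*$ merely in $L^\infty$, so the Riemann-sum identification of $L_n(l_{\mathbf v^n})$ with $F(\gamma^*)$ is not pointwise; one routes through the piecewise linear interpolant of $\gamma^*$ and uses $L^p$-convergence of its difference quotients, exploiting $p>1$ a second time. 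These calibrations, rather than any novel analytic input, are the load-bearing technical content of the theorem.
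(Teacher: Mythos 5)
Your proposal is correct and follows essentially the same route as the paper: a probabilistic Gamma-convergence argument with (i) the liminf inequality from lower semicontinuity of $F$ under (A0)--(A1), (ii) compactness via H\"older's inequality and Arzel\`a--Ascoli using $p>1$, and (iii) a recovery sequence built by snapping $\gamma^*(i/m_n)$ to nearby sample points with mesh $m_n\sim K/\epsilon_n$, followed by the standard subsequence argument. The only points you leave implicit are that Proposition \ref{nearestneighbor} already \emph{is} the covering estimate you ask for (the quantity $R_n=\|T_n-Id\|_\infty$ satisfies $R_n/\epsilon_n\to 0$ under \eqref{rateassumption}, so no sharper input is needed), and that consecutive recovery vertices must be \emph{distinct} for $(v_0,\ldots,v_{m_n})$ to lie in $V_n(a,b)$ (edges require $0<|u-v|<\epsilon_n$), which the paper secures via the lower bound $c_1\le|\dot\gamma^*|$ from Proposition \ref{existence}, giving $|\gamma^*(i/m_n)-\gamma^*((i-1)/m_n)|\gtrsim\epsilon_n\gg R_n$.
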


The case $d\geq 2$ and $p=1$ requires further development, and is addressed with a few more assumptions in Theorem \ref{linear_p=1approxthm}.

\medskip

We now address quasinormal interpolations.

\begin{theorem} \label{theorem2} 
Suppose that either (1) $p> 1$ or (2) $d\geq 2$ and $p=1$. Then, with respect to 
  realizations $\{ X_i\}$ in a probability $1$ set, the
  following holds. The minimum values of the energies $G_n$ converge
  to the minimum of $F$,
  \[ \lim_{n \to \infty} \min_{\gamma \in \Omega_n^\gamma(a,b)} G_n(\gamma) = \min_{\gamma \in \Omega(a,b)} F(\gamma). \]

  Moreover, consider a sequence of optimal paths $\gamma_n \in \argmin G_n$.  Any subsequence of $\{\gamma_n\}$ has a further subsequence that converges
  uniformly to a limit path
  $\gamma \in \argmin F$,
  \[ \lim_{k\to \infty} \sup_{0 \leq t \leq 1} |\gamma_{n_k}(t) - \gamma(t)| = 0. \]

In addition, if $\gamma$ is the unique minimzer of $F$, then the whole sequence $\gamma_n$ converges uniformly to $\gamma$.
\end{theorem}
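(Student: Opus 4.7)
My plan is a probabilistic Gamma-convergence argument, adapted from the strategy used for Theorem \ref{theorem1} but tailored to quasinormal interpolations and to the $p=1$ case. I work on a probability-one event on which, first, $V_n(a,b)$ is nonempty for all large $n$ (from \eqref{rateassumption} via Proposition \ref{nearestneighbor}), and second, there exists a sequence $\delta_n \downarrow 0$ such that every $x \in D$ has a sample point within $\delta_n \epsilon_n$ for all large $n$. The three ingredients are a recovery (limsup) construction, compactness of minimizers, and a liminf inequality.

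For the \emph{recovery step}, I fix a minimizer $\gamma \in \argmin F$ (existence from Proposition \ref{existence}) and choose $m_n = \lceil 2 K_\gamma/\epsilon_n \rceil$, where $K_\gamma$ bounds the Lipschitz norm of $\gamma$. On the good event I pick $v_0 = a$, $v_{m_n} = b$, and $v_i \in \mathcal{X}_n$ within $\delta_n \epsilon_n$ of $\gamma(i/m_n)$, so that $|v_i - v_{i-1}| < \epsilon_n$ and $\v = (v_0, \ldots, v_{m_n}) \in V_n(a,b)$. Using the identity \eqref{G_n_equation}, the competitor bound $d_f(v_{i-1}, v_i) \leq \int_0^1 f(l_{v_{i-1}, v_i}(t), v_i - v_{i-1}) \, dt$, the $p$-homogeneity of $f$, and uniform continuity of $f$ on compact subsets of $D \times \mathbb{R}^d$, I would identify the sum $G_n(\gamma_{\v})$ with a Riemann sum for $F(\gamma)$ up to errors vanishing with $\delta_n$, concluding $\limsup_n G_n(\gamma_{\v}) \leq F(\gamma)$ and hence $\limsup_n \min G_n \leq \min F$.

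For \emph{compactness and the liminf inequality}, take any $\gamma_n \in \argmin G_n$; the recovery step shows $G_n(\gamma_n)$ is bounded. When $p > 1$, ellipticity (A3) gives $\int_0^1 |\dot\gamma_n|^p \leq G_n(\gamma_n)/m_1$, so the $\gamma_n$ are bounded in $W^{1,p}$ and equi-H\"older by Morrey, yielding a uniformly convergent subsequence with limit $\gamma \in W^{1,p}(0,1;D) \subset \Omega(a,b)$; lower semicontinuity of $F$ under weak $W^{1,p}$ convergence (from convexity (A1) via a standard Ioffe-type argument) gives $F(\gamma) \leq \liminf G_n(\gamma_{n_k})$. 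When $p = 1$ and $d \geq 2$, I reparametrize $\gamma_n$ at constant speed $\tilde\gamma_n$; by the invariance in \eqref{p=1triangle}, $F(\tilde\gamma_n) = G_n(\gamma_n)$, while the Lipschitz constant $|\dot{\tilde\gamma}_n| = \int_0^1 |\dot\gamma_n|$ is bounded by $G_n(\gamma_n)/m_1$, so Arzel\`a-Ascoli supplies a uniform limit $\gamma \in \Omega(a,b)$, and lsc of the Finsler functional $F$ on uniformly Lipschitz curves (again via (A1)) delivers the liminf. Combining with the limsup gives $\gamma \in \argmin F$ and $\min G_n \to \min F$; uniqueness of the continuum minimizer upgrades subsequential convergence to full-sequence convergence by a standard argument.

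The main obstacle is the $p=1$ case. Before reparametrization the $\gamma_n$ need not be equicontinuous, and weak-$W^{1,p}$ lower semicontinuity breaks down at $p=1$. The reparametrization trick succeeds precisely because of the $1$-homogeneity manipulation in \eqref{p=1triangle}, but one must verify that the constant-speed limit is non-degenerate (so $\gamma$ genuinely lies in $\Omega(a,b)$ rather than collapsing to a constant) and that lower semicontinuity of $F$ persists under the weaker uniform topology used; the conditions $d \geq 2$ and convexity (A1) are each essential here, respectively preventing the one-dimensional degeneracy discussed at the end of the preceding subsection and preserving lsc.
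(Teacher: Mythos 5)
Your overall Gamma-convergence skeleton (recovery sequence, compactness, liminf) matches the paper's strategy, and your recovery step and the H\"older/Morrey equicontinuity argument for $p>1$ are essentially the paper's Lemmas \ref{limsup} and \ref{compactness}. Two remarks on the recovery step: you should use the \emph{two-sided} bound $c_1|s-t|\leq|\gamma(s)-\gamma(t)|\leq c_2|s-t|$ furnished by \eqref{derivbounds} of Proposition \ref{existence}, not just the upper Lipschitz bound, because the graph $\mathcal{G}_n(a,b)$ only places an edge when $0<|v_i-v_{i-1}|<\epsilon_n$, so you must also rule out coincident consecutive interpolation points; this is exactly what Proposition \ref{ctwoapprox} does.

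The genuine gap is in the $p=1$ compactness. Reparametrizing $\gamma_n$ to constant speed gives you equicontinuity of $\tilde\gamma_n$ and hence (with the liminf and limsup steps) the convergence $\min G_n\to\min F$ and a subsequential uniform limit of the \emph{reparametrized} paths. But the theorem asserts uniform convergence of the paths $\gamma_n$ in their given parametrization (equal time $1/m$ per segment), and your argument does not control that parametrization: a priori an optimal discrete path could consist of a huge number $m$ of segments of wildly varying lengths, in which case $\{\gamma_n\}$ need not be equicontinuous and need not have any uniformly convergent subsequence, even though $\{\tilde\gamma_n\}$ does. The missing ingredient is the paper's Proposition \ref{p=1compactnesslemma}: on the auxiliary probability-one event $A_2$ (no sample point lies on the fixed quasinormal geodesic between two others), optimality plus the triangle inequality for $d_f$ (valid since $p=1$) forces each ball of radius $\epsilon_n/2$ about a vertex of the optimal path to contain at most two of its vertices; covering the vertices by such balls yields $m\epsilon_n/4\leq L\leq G_n(\gamma_n)/m_1$, and since each segment has the Lipschitz bound \eqref{uniformlipschitz}, the whole path $\gamma_n$ is Lipschitz with constant $(m_2/m_1)m\epsilon_n\leq (4m_2/m_1^2)G_n(\gamma_n)$, uniformly in $n$. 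This bound on the number of hops $m$ in terms of the energy is precisely what your proposal lacks, and it is also why the paper needs the event $A_2$, which your argument never invokes.
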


We remark, when $d\geq 2$ and $p=1$, that there is a certain ambiguity in the results of Theorem \ref{theorem2}, due to the invariance of $F$ under reparametrization of paths.  In this case, there is no unique minimizer of $F$.  Consider for example the case where $f(x,v)=|v|$ and $F(\gamma) = \int_0^1 |\dot{\gamma}(t)|dt$. Any minimizer of this functional is a parameterization of a line, but of course such minimizers are not unique.  

One way to address this is to formulate a certain Hausdorff convergence with respect to images of the paths.
Given $\gamma \in \Omega(a,b)$, we denote the image of $\gamma$ by
\begin{equation*}
  S_{\gamma} = \left\{ \gamma(t)\ \middle|\ 0 \leq t \leq 1 \right\}.
\end{equation*}
Consider the Hausdorff
metric $d_{haus}$, defined on compact subsets $A,B$ of $D$ by
\begin{equation*}
  d_{haus}(A,B) = \max\{ \sup_{x \in A} \inf_{y \in B} d(x,y), \sup_{y \in B} \inf_{x \in A} d(x,y) \}.
\end{equation*}

\begin{corollary} \label{hausdorffcor} 
Suppose that either (1) $d\geq 2$ and $p=1$ or (2) $p>1$.   
Consider paths $\{\gamma_{\v^{(n)}}\}$, for all large $n$ either in form $\gamma_{\v^{(n)}}\in \argmin G_n$, 
or $\gamma_{\v^{(n)}}\in \argmin L_n$.

 Then, with respect to realizations $\{X_i\}$ in a probability $1$ set, any subsequence of $\{\v^{(n)}\}$ has a further subsequence which converges in the Hausdorff sense to $S_{\gamma}$,
  where $\gamma \in \argmin F$ is an optimal path.
	
	Moreover, if $F$
  has a unique (up to reparametrization) minimizer $\gamma$, then the whole sequence converges,
  \begin{equation*}
    \lim_{n \to \infty} d_{haus}(\v^{(n)},S_{\gamma}) = 0.
  \end{equation*}

\end{corollary}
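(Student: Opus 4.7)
The plan is to reduce the Hausdorff convergence of vertex sets to the uniform convergence of interpolating paths already guaranteed by Theorems \ref{theorem1} and \ref{theorem2} (and Theorem \ref{linear_p=1approxthm} for the $p=1$ linear case), together with a direct estimate showing that the vertex set $\v^{(n)}$ is $O(\epsilon_n)$-close, in Hausdorff distance, to the image $S_{\gamma_{\v^{(n)}}}$ of its interpolating path.

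First, fix an arbitrary subsequence of $\{\v^{(n)}\}$. By the relevant preceding theorem, extract a further subsequence (not relabeled) along which $\gamma_{\v^{(n)}}\to\gamma$ uniformly on $[0,1]$, for some $\gamma\in \argmin F$. Uniform convergence of paths immediately gives Hausdorff convergence of their images, since for each $x=\gamma_{\v^{(n)}}(t)\in S_{\gamma_{\v^{(n)}}}$ the point $\gamma(t)\in S_\gamma$ satisfies $|x-\gamma(t)|\leq \|\gamma_{\v^{(n)}}-\gamma\|_\infty$ (and symmetrically), so
\begin{equation*}
d_{haus}\bigl(S_{\gamma_{\v^{(n)}}},S_\gamma\bigr)\leq \sup_{t\in[0,1]}\bigl|\gamma_{\v^{(n)}}(t)-\gamma(t)\bigr|\longrightarrow 0.
\end{equation*}

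Next I would show $d_{haus}(\v^{(n)},S_{\gamma_{\v^{(n)}}})\to 0$. The inclusion $\v^{(n)}\subset S_{\gamma_{\v^{(n)}}}$ is trivial, so only one direction needs work. Any $x\in S_{\gamma_{\v^{(n)}}}$ lies on a segment joining consecutive vertices $v_i^{(n)},v_{i+1}^{(n)}$ with $|v_{i+1}^{(n)}-v_i^{(n)}|<\epsilon_n$. For linear segments the diameter is bounded directly by $\epsilon_n$. For quasinormal segments $\gamma_{v_i^{(n)},v_{i+1}^{(n)}}$, comparing to the linear path via (A3) yields $d_f(v_i^{(n)},v_{i+1}^{(n)})\leq m_2|v_{i+1}^{(n)}-v_i^{(n)}|^p$, and then quasinormality combined with the lower bound in (A3) gives $|\dot\gamma_{v_i^{(n)},v_{i+1}^{(n)}}|\leq (m_2/m_1)^{1/p}|v_{i+1}^{(n)}-v_i^{(n)}|$ a.e., whence the segment image has diameter at most $(m_2/m_1)^{1/p}\epsilon_n$. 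In either case,
\begin{equation*}
d_{haus}\bigl(\v^{(n)},S_{\gamma_{\v^{(n)}}}\bigr)\leq C\,\epsilon_n\longrightarrow 0,
\end{equation*}
for a constant $C$ depending only on $m_1,m_2,p$. Combining with the previous display through the triangle inequality for $d_{haus}$ gives $d_{haus}(\v^{(n)},S_\gamma)\to 0$ along the subsequence, proving the first assertion.

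For the uniqueness statement, note that if $\gamma$ is the unique minimizer of $F$ up to reparametrization, then every element of $\argmin F$ has the same image $S_\gamma$. Since $D$ is compact, the space of nonempty compact subsets of $D$ equipped with $d_{haus}$ is itself compact (Blaschke selection), so every subsequence of $\{\v^{(n)}\}$ has a Hausdorff-convergent further subsequence; by the argument above its limit is forced to be $S_\gamma$, and hence the whole sequence converges to $S_\gamma$. The substantive work has already been done in Theorems \ref{theorem1}, \ref{theorem2}, and \ref{linear_p=1approxthm}; the only mildly delicate piece here is the quasinormal diameter estimate, which is nevertheless a direct consequence of the two-sided bound (A3).
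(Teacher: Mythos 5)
Your proposal is correct and follows essentially the same route as the paper: the paper's Proposition \ref{unifhausdorff} likewise reduces the claim to the uniform convergence supplied by Theorems \ref{theorem1} and \ref{theorem2}, comparing vertices to $S_\gamma$ via the parametrization and using that consecutive vertices (hence segments) are within $O(\epsilon_n)$, which is exactly your two-step triangle-inequality decomposition through $S_{\gamma_{\v^{(n)}}}$. The uniqueness step is also handled by the same subsequence argument (your appeal to Blaschke selection is unnecessary but harmless).
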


\subsection{Riemann sum costs and $p=1$-linear interpolating costs}
\label{Riemann_subsect}

\begin{figure}
 \includegraphics[trim={0.8cm 0.8cm 0.65cm 0.35cm},width=0.6\linewidth]{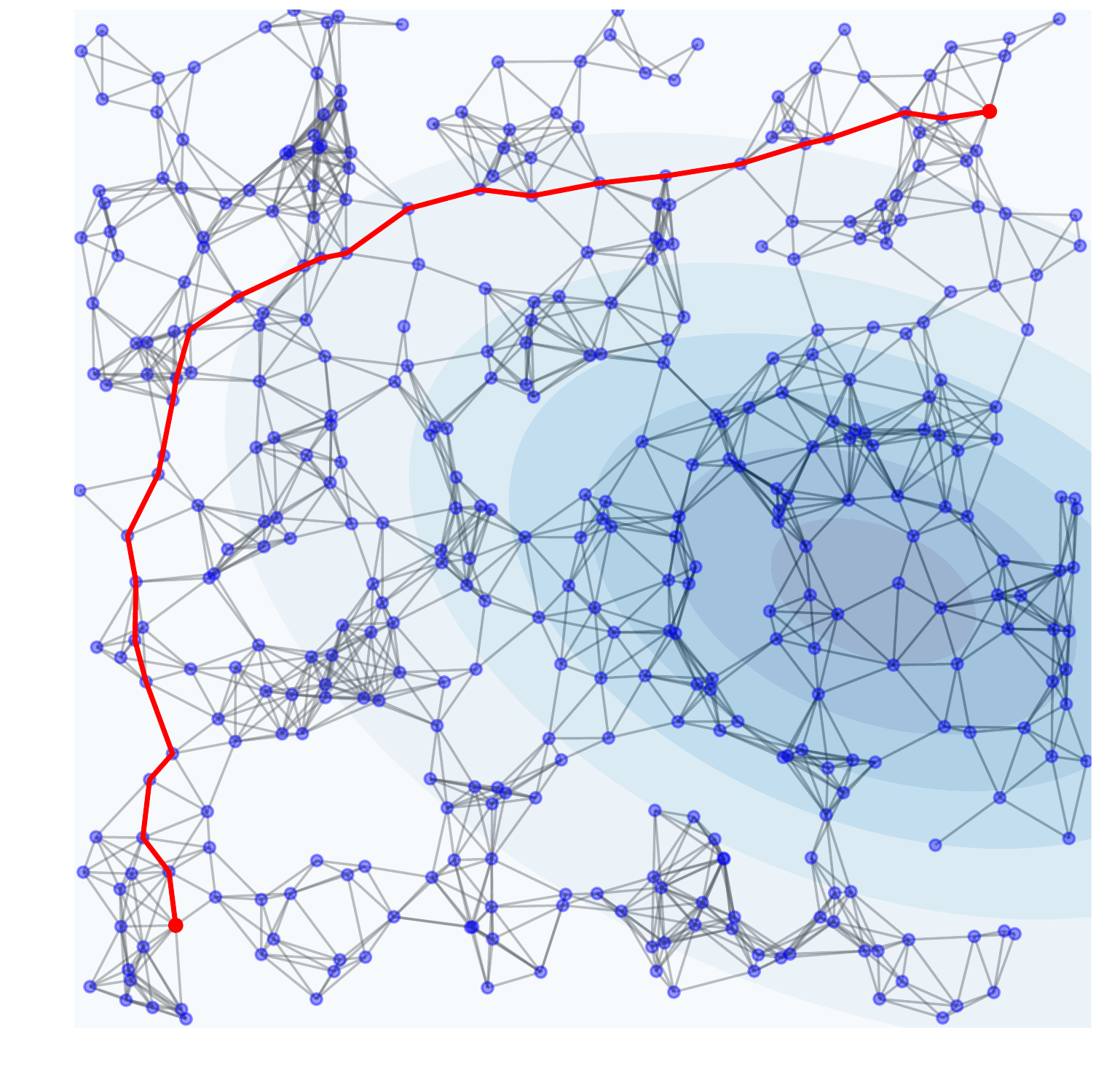}
 \label{fig:rgg_path}
 \caption{$H_{400}$-minimizing discrete path in the setting of Figure \ref{fig1}, linearly interpolated for visual clarity.}
 \label{fig2}
\end{figure}

We first introduce a cost which requires knowledge of $f$ only on discrete points and, as a consequence, more `applicable'.  At the end of the subsection, we return to linear interpolated costs when $p=1$.

  Define $H_n : V_n(a,b) \to \mathbb{R}$, for $\v = (v_0,v_1,\ldots, v_m)$, by
\begin{equation}
  H_n(\v) = \frac{1}{m} \sum_{i=1}^m f(v_i, m(v_{i+1}-v_i)).
\label{H_n_eq}
\end{equation}
The functional $H_n$ is, in a sense, a `Riemann sum' approximation to $L_n$ and $G_n$, and therefore its behavior, and the behavior of its minimizing paths, should be similar to that of $L_n$ and $G_n$.  See Figure \ref{fig2} for an example of an optimal $H_n$ path.

We make this intuition rigorous by establishing
variants of Theorems \ref{theorem1} and \ref{theorem2} with respect to
the cost $H_n$.   Given the `rougher' nature of $H_n$, however, additional assumptions on $f$ and $\epsilon_n$, beyond those in the `standing assumptions', will be helpful in this regard.
As in the previous Subsection, our results differ between the two cases $p = 1$ and
$p > 1$.

Define the following smoothness condition: 
\begin{itemize}
\item [(Lip)] There exists a $c$ such that for all
$x,y \in D$ and $v \in \mathbb{R}^d$ we have
\begin{equation*} \label{flipschitz}
      |f(x,v) - f(y,v)| \leq c |x-y| |v|^p.
\end{equation*}
\end{itemize}
We note when $f$ satisfies the homogeneity condition \eqref{fhomo}, and $\nabla_x f(x,v)$ is uniformly bounded on $D\times \{y: |y| = 1\}$, that (Lip) holds.

We now consider the behavior of $H_n$ when $p > 1$. 
The analogue to Theorem \ref{theorem1} and Corollary \ref{hausdorffcor} in this setting is the
following.

\begin{theorem}
\label{p>1approxthm}
	Suppose $p > 1$, and that $f$ in addition
  satisfies (Lip). With respect to realizations
  $\{ X_i\}$ in a probability 1 set, the minimum values of the energies $H_n$ converge to the
  minimum of $F$,
  \[ \lim_{n \to \infty} \min_{\v \in V_n(a,b)} H_n(\v) = \min_{\gamma \in \Omega(a,b)} F(\gamma). \]

  Further, consider a sequence of optimal discrete paths $\w^{(n)} \in \argmin H_n$, and their linear interpolations $\{l_{\w^{(n)}}\}$.  Then, for any subsequence of $\{l_{\w^{(n)}}\}$ and correspondingly of $\{\w^{(n)}\}$, there is a further subsequence of the linear paths which converges uniformly to a limit path
$\gamma \in \argmin F$, and of the discrete paths in the Hausdorff sense to $S_\gamma$.   

If $F$ has a unique minimizer $\gamma$, the whole sequence of linear paths converges uniformly to $\gamma$, and the whole sequence of discrete paths converges in the Hausdorff sense to $S_\gamma$.

\end{theorem}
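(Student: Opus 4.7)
The plan is to exploit that $H_n$ is a Riemann-sum approximation to $L_n$ by proving a uniform comparison of the form $|H_n(\v) - L_n(l_\v)| \leq C\epsilon_n H_n(\v)$ for every $\v \in V_n(a,b)$, and then to read off the conclusions directly from Theorem \ref{theorem1}. The hypothesis (Lip) is introduced precisely to make this comparison work, while $p$-homogeneity and the lower bound in (A3) handle the subsequent algebraic rearrangements.

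For the comparison, using \eqref{L_n_equation} and \eqref{H_n_eq} together with $p$-homogeneity, write
\begin{equation*}
H_n(\v) - L_n(l_\v) \;=\; m^{p-1}\sum_{i=0}^{m-1}\int_0^1 \bigl[f(v_i,v_{i+1}-v_i) - f(l_{v_i,v_{i+1}}(t),v_{i+1}-v_i)\bigr]\, dt.
\end{equation*}
Applying (Lip) to each integrand bounds the bracket by $c\,t\,|v_{i+1}-v_i|^{p+1}$. Using $|v_{i+1}-v_i| < \epsilon_n$ on one factor and $|v_{i+1}-v_i|^p \leq f(v_i,v_{i+1}-v_i)/m_1$ (from (A3)) on the remaining $p$-th power yields the desired estimate $|H_n(\v) - L_n(l_\v)| \leq (c/(2m_1))\epsilon_n H_n(\v)$, uniformly in $\v$.

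Convergence of the minimum values then follows by a two-sided sandwich. Taking $\w^{(n)} \in \argmin H_n$ and $\u^{(n)} \in V_n(a,b)$ with $l_{\u^{(n)}} \in \argmin L_n$, the comparison estimate gives $\min L_n \leq L_n(l_{\w^{(n)}}) \leq (1+o(1))\min H_n$ and $\min H_n \leq H_n(\u^{(n)}) \leq (1+o(1))\min L_n$, so $\min H_n \to \min F$ on the probability-one event furnished by Theorem \ref{theorem1}. For paths, the interpolations $l_{\w^{(n)}}$ are only \emph{near}-minimizers of $L_n$, not exact minimizers, so Theorem \ref{theorem1}'s statement does not apply verbatim. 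The resolution is to revisit the $\Gamma$-convergence argument underlying Theorem \ref{theorem1}: the liminf inequality and the equi-Lipschitz compactness (the latter from coercivity when $p>1$, via (A3)) apply to \emph{any} sequence of bounded $L_n$-cost, so $\{l_{\w^{(n)}}\}$ admits uniformly convergent subsequences with limits in $\argmin F$.

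Finally, the Hausdorff convergence of the discrete sets $\w^{(n)}$ to $S_\gamma$ is essentially automatic: every link of $l_{\w^{(n)}}$ has length less than $\epsilon_n$, so $d_{haus}(\w^{(n)}, S_{l_{\w^{(n)}}}) \leq \epsilon_n \to 0$, while uniform convergence $l_{\w^{(n)}} \to \gamma$ implies $S_{l_{\w^{(n)}}} \to S_\gamma$ in Hausdorff distance, and the triangle inequality finishes the job. Uniqueness of $\gamma$ promotes subsequential convergence to whole-sequence convergence by the standard subsequence-of-subsequences argument. The main technical hurdle is the near-minimizer issue noted above; beyond that, the theorem amounts to a careful bookkeeping of the $O(\epsilon_n)$ comparison between $H_n$ and $L_n$.
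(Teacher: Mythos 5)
Your proposal is correct and follows essentially the same route as the paper: the (Lip)-based segmentwise comparison $|H_n(\v)-L_n(l_\v)|\leq C\epsilon_n\min(L_n(l_\v),H_n(\v))$, the two-sided sandwich of the minima against Theorem \ref{theorem1}, the key observation that $l_{\w^{(n)}}$ are only near-minimizers of $L_n$ so one must fall back on the liminf inequality plus the $p>1$ coercivity/compactness lemma (which applies to any sequence of bounded $F$-cost), and the elementary Hausdorff comparison between $\w^{(n)}$, $S_{l_{\w^{(n)}}}$, and $S_\gamma$. The only cosmetic difference is that the compactness for $p>1$ yields equi-H\"older (exponent $1/q$) rather than equi-Lipschitz continuity, which does not affect the Arzel\`a--Ascoli argument.
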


We will need to impose further assumptions on the integrand $f$ to state results in the case $p=1$.  See below for examples of $f$ satisfying these conditions, and also Subsection \ref{remarks} for further comments. 

\begin{itemize}
\item [(Hilb)] We say that $f$ satisfies the `Hilbert condition' if, for each $x$, 
$$\inf_{\gamma \in \Omega(a,b)}\int_0^1 f(x, \dot\gamma(t))dt = f(x, b-a),$$ 
that is, straight lines are geodesics for the kernel $f(x,\cdot)$.

\item[(TrIneq)] We say $f$ satisfies the `triangle inequality' if, for each $x$, 
$$f(x,v-w)\leq f(x,v-u) + f(x,u-w)$$ 
for all $u,v,w \in \mathbb{R}^d$.

\item[(Pythag)] Let $\alpha>1$.  Consider points $u,v,w$ where $|uw|,|vw|,|uv|<\eta$ for an $\eta<1$.  Suppose there is a constant $c$ such that, for $0<r<1$,
\begin{itemize}
\item ${\rm dist}(w, {\rm line}(u,v))\geq r$, and 
\item $|uv|\leq cr^{1/\alpha}$. 
\end{itemize}
Then, we say $f$ satisfies the `Pythagoras $\alpha$-condition' if there is a constant $C=C(\alpha, f, c)$ such that, for all $x$,
$$f(x,w-u) + f(x,v-w)\geq f(x,v-u) + Cr^\alpha.$$  
Here, ${\rm line}(u,v)$ is the line segment between $u$ and $v$.

\end{itemize}

 Here, in the statement of (Hilb), the kernel function, for fixed $x$, is only a function of $v$.  The following lemma is a case of the Hamel's criterion discussed in the previous Subsection. 
\begin{lemma}
\label{Hilblemma}
Given the `standing assumptions', suppose also, for fixed $x\in D$, that $v\mapsto f(x,v)$ is $C^2$ on $\R^d\setminus\{0\}$ with positive definite Hessian.   Then, (Hilb) is satisfied.
\end{lemma}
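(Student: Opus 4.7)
The plan is to prove (Hilb) by a straightforward application of Jensen's inequality. With $x \in D$ held fixed, the statement reduces to minimizing the functional $\gamma \mapsto \int_0^1 f(x,\dot\gamma(t))\,dt$ over Lipschitz paths from $a$ to $b$ in $D$, and claiming the straight-line path is optimal with value $f(x,b-a)$. The $C^2$ and positive-definite-Hessian hypotheses are used only to link the lemma to Hamel's criterion from the previous subsection (when $f$ is $x$-independent the mixed-partial equality $\partial_{x_i}\partial_{v_j}f = \partial_{x_j}\partial_{v_i}f$ is trivial, so Hamel's criterion reduces to a pointwise convexity statement); the actual inequality comes from (A1).

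First, I would record that for each fixed $x\in D$, assumption (A1) ensures $v\mapsto f(x,v)$ is convex on $\mathbb{R}^d$, and (A0) plus (A3) ensure it is continuous and real-valued. Next, given any $\gamma\in\Omega(a,b)$, the path is Lipschitz, so $\dot\gamma\in L^\infty([0,1];\mathbb{R}^d)\subset L^1([0,1];\mathbb{R}^d)$, and $\int_0^1 \dot\gamma(t)\,dt = \gamma(1)-\gamma(0) = b-a$. Applying Jensen's inequality for the convex function $v\mapsto f(x,v)$ against the uniform probability measure on $[0,1]$ yields
\begin{equation*}
  \int_0^1 f(x,\dot\gamma(t))\,dt \;\geq\; f\!\left(x,\int_0^1 \dot\gamma(t)\,dt\right) \;=\; f(x,b-a),
\end{equation*}
so $\inf_{\gamma\in\Omega(a,b)}\int_0^1 f(x,\dot\gamma(t))\,dt \geq f(x,b-a)$.

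Finally, I would exhibit the matching upper bound via the linear path $\gamma_0(t) = (1-t)a + tb$, which lies in $\Omega(a,b)$ (using convexity of $D$ from \eqref{domain_assumption}) and has constant velocity $\dot\gamma_0\equiv b-a$, giving $\int_0^1 f(x,\dot\gamma_0(t))\,dt = f(x,b-a)$. Combining the two bounds establishes (Hilb). There is no substantive obstacle: the entire argument rests on convexity (A1) and the convexity of $D$, and the extra $C^2$/Hessian hypotheses play no role in the inequality itself — they merely provide the clean sufficient condition (strict convexity of $f(x,\cdot)$ away from the radial direction) that the authors wish to highlight as the Hamel-type criterion from the preceding discussion.
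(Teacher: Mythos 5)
Your proof is correct, but it is genuinely different from the paper's. The paper argues via the calculus of variations: it takes a quasinormal minimizer $\gamma$ of the frozen functional (which also minimizes the squared functional $\int_0^1 f^2(x_0,\dot\gamma)\,dt$, citing Proposition 5.25 of \cite{buttazzo1998one}), writes the Euler--Lagrange equation for $g(v)=f^2(x_0,v)$, namely $\frac{d}{dt}\nabla_v g(\dot\gamma(t))=0$, and then uses the positive-definiteness of the Hessian to conclude $\ddot\gamma\equiv 0$, so the minimizer is a straight line. Your route instead applies Jensen's inequality to the convex map $v\mapsto f(x,v)$ from (A1), using $\int_0^1\dot\gamma\,dt=b-a$ for Lipschitz $\gamma$, and matches the lower bound with the linear path, which is admissible because $D$ is convex. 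The comparison: your argument is more elementary, needs no existence or regularity theory for minimizers, and in fact shows that (Hilb) already follows from the standing assumptions alone, so the $C^2$ and Hessian hypotheses are superfluous for the stated conclusion; the paper's argument, on the other hand, is the Hamel-criterion-style computation and yields the slightly stronger structural fact that the (quasinormal) minimizers themselves are straight-line parametrizations, rather than only the identity of the infimum value. Both arguments use convexity of $D$ to know the segment from $a$ to $b$ is an admissible path.
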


\begin{proof} Fix an $x_0\in D$.  There is a quasinormal minimizer $\gamma\in C^2$ where both $c=f(x_0, \dot\gamma(t))=\inf_{\gamma\in \Omega(a,b)}\int_0^1f(x_0,\dot\gamma(t))dt$ and 
$$c^2=f^2(x_0,\dot\gamma(t)) = \inf_{\gamma\in\Omega(a,b)}\int_0^1 f^2(x_0, \dot\gamma(t))dt$$
 for a.e. $0\leq t\leq 1$ (cf. Prop. 5.25 in \cite{buttazzo1998one}).  Let $g(v) = f^2(x_0,v)$. Then, $\gamma$ satisfies the Euler-Lagrange equation $\frac{d}{dt}\nabla_v g( \dot\gamma(t)) = \nabla_x g(\dot\gamma(t)) = 0$.  In other words, $H\ddot\gamma(t) = 0$, where $H$ denotes the Hessian of $g$. By assumption, $H$ is positive definite.  Hence, $\ddot\gamma(t)\equiv 0$, and so $\gamma$ is a parametrization of a straight line.
\end{proof}

An example of a class of kernels $f$ satisfying the `standing assumptions' and the additional conditions above is given in the following result.
Recall $\langle\cdot, \cdot\rangle$ denotes the Euclidean inner product on $\R^d$.

\begin{lemma}\label{h_condition}
Let $x\mapsto M=M(x)$ be a $C^1$, strictly elliptic matrix-valued function on $D$.
The kernel $f(x,v) = 
\langle v, M(x)v\rangle^{1/2}$
satisfies the `standing assumptions', and also (Lip), (Hilb), (TrIneq) and (Pythag) for all $\alpha>1$.  
\end{lemma}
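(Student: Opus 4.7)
The standing assumptions and (Lip) come almost for free. Since $M$ is $C^1$ and $D$ is compact, there exist $0<\lambda_1\leq \lambda_2$ with $\lambda_1 I \preceq M(x)\preceq \lambda_2 I$ uniformly on $D$, so $f(x,v) = \langle v, M(x)v\rangle^{1/2}$ is continuous on $D\times\R^d$, $C^1$ on $D\times(\R^d\setminus\{0\})$, $1$-homogeneous, convex in $v$ (being an inner-product norm), and two-sided comparable to $|v|$; this yields (A0)--(A3) with $p=1$. For (Lip), the identity $f(x,v)^2 - f(y,v)^2 = \langle v,(M(x)-M(y))v\rangle$ gives $|f(x,v)^2 - f(y,v)^2|\leq c|x-y||v|^2$ by the $C^1$-regularity of $M$ on $D$, and dividing by $f(x,v)+f(y,v)\geq 2\sqrt{\lambda_1}|v|$ produces (Lip).

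For (Hilb) and (TrIneq), note that for each fixed $x$ the map $v\mapsto f(x,v) = \|v\|_{M(x)}$ is a Hilbert-space norm. Then (TrIneq) is the norm triangle inequality applied to $v-w = (v-u)+(u-w)$, and (Hilb) follows from Jensen's inequality for the convex function $f(x,\cdot)$:
\begin{equation*}
\int_0^1 f(x,\dot\gamma(t))\,dt \;\geq\; f\Big(x,\int_0^1 \dot\gamma(t)\,dt\Big) \;=\; f(x,b-a),
\end{equation*}
with equality realized by the constant-speed straight line from $a$ to $b$.

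The main step is (Pythag). Fix $\alpha>1$ and points $u,v,w$ satisfying the hypotheses, and set $r_M = \mathrm{dist}_{M(x)}(w,\mathrm{line}(u,v))$ and $\ell_M = \|v-u\|_{M(x)}$; uniform ellipticity yields $r_M\geq \sqrt{\lambda_1}\,r$ and $\ell_M\leq \sqrt{\lambda_2}\,c\,r^{1/\alpha}$. I would then work in an $M(x)$-orthonormal frame, placing $u=0$, $v=(\ell_M,0)$, $w=(s,h_M,0,\dots)$, and split into (i) $0\leq s\leq \ell_M$ (foot of perpendicular inside the segment, so $r_M = h_M$), where the symmetric Minkowski estimate
\begin{equation*}
\sqrt{s^2+h_M^2}+\sqrt{(\ell_M-s)^2+h_M^2}\;\geq\;\sqrt{\ell_M^2+4h_M^2}
\end{equation*}
gives $f(x,w-u)+f(x,v-w)-f(x,v-u)\geq 2 h_M^2/(\ell_M + h_M)$; and (ii) $s\notin[0,\ell_M]$ (foot outside), where a direct computation gives $\geq r_M$. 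In both cases one obtains
\begin{equation*}
f(x,w-u)+f(x,v-w)-f(x,v-u) \;\geq\; C_1\,\min\!\Big(\tfrac{r_M^2}{\ell_M},\,r_M\Big).
\end{equation*}
Substituting the bounds on $r_M, \ell_M$ produces $\geq C_2\min(r^{2-1/\alpha},r)$. Since $\alpha>1$ forces $2-1/\alpha>1$ and $r<1$, this minimum equals $C_2 r^{2-1/\alpha}$; finally, $(\alpha-1)^2\geq 0$ rearranges to $2-1/\alpha\leq\alpha$, so $r^{2-1/\alpha}\geq r^\alpha$ for $r\in(0,1)$, completing (Pythag).

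The main obstacle is really just the geometric bookkeeping in (Pythag): tracking $c$, performing the two ellipticity conversions between Euclidean and $M(x)$-distances, and verifying the exponent comparison via $(\alpha-1)^2\geq 0$. Every other condition follows quickly from the observation that $f(x,\cdot)$ is a Hilbert-space norm depending smoothly on $x$.
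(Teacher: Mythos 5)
Your proposal is correct, and for the two substantive conditions it takes a genuinely different route from the paper. For (Hilb), you use Jensen's inequality, $\int_0^1 f(x,\dot\gamma)\,dt \geq f\big(x,\int_0^1\dot\gamma\,dt\big) = f(x,b-a)$, with equality for the straight line; the paper instead invokes its Lemma on Hamel's criterion, deriving the Euler--Lagrange equation for $g=f^2(x_0,\cdot)$ and concluding $\ddot\gamma\equiv 0$ from positive definiteness of the Hessian of $g$. Your argument is more elementary, needs only convexity of the norm $v\mapsto\langle v,M(x)v\rangle^{1/2}$, and sidesteps the regularity bookkeeping entirely. For (Pythag), the paper writes out only the conformal case $M(x)=h^2(x)\,\mathrm{Id}$, dropping a Euclidean perpendicular from $w$ to the line, splitting on whether the foot lies in the segment, and completing a square in $|uz|$ and $r^\alpha$ to gain $(3\max\{c,1\})^{-1}r^\alpha$ on each leg; it then asserts the general case "carries over." You instead treat general $M(x)$ directly by passing to an $M(x)$-orthonormal frame, using the reflection/Minkowski estimate $\sqrt{s^2+h^2}+\sqrt{(\ell-s)^2+h^2}\geq\sqrt{\ell^2+4h^2}$ to extract an excess of order $\min(r_M^2/\ell_M,\,r_M)$, converting back via the ellipticity constants, and closing with the exponent comparison $2-1/\alpha\leq\alpha$ (equivalent to $(\alpha-1)^2\geq 0$) — the same inequality that underlies the paper's requirement $2<\min\{2\alpha,\alpha+1/\alpha\}$. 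The net effect is that your version actually supplies the general-$M$ argument the paper leaves implicit, at the cost of the two explicit ellipticity conversions $r_M\geq\sqrt{\lambda_1}\,r$ and $\ell_M\leq\sqrt{\lambda_2}\,c\,r^{1/\alpha}$; your treatment of (Lip) via the difference of squares and of (TrIneq) via the norm triangle inequality matches what the paper takes as immediate.
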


\begin{proof}
The kernel clearly satisfies the `standing assumptions' and (Lip).
Next, for fixed $x$, the map $v\mapsto f(x,v)$ satisfies the conditions of Lemma \ref{Hilblemma}, and so satisfies (Hilb).  Also, $v\mapsto f(x,v)$ trivially satisfies (TrIneq).

We show (Pythag) in the case $f(x,v) = h(x)|v|$, that is $M(x) = h^2(x)Id$, as the notation is easier and all the ideas carry over to the more general case.  Consider a right triangle joining $u,w, z$ where $z$ is on the line through $u,v$ (cf. Figure \ref{fig:pythag}).
\begin{figure}[h]
\centering
\includegraphics[trim={0cm 1cm 0cm 1cm}]{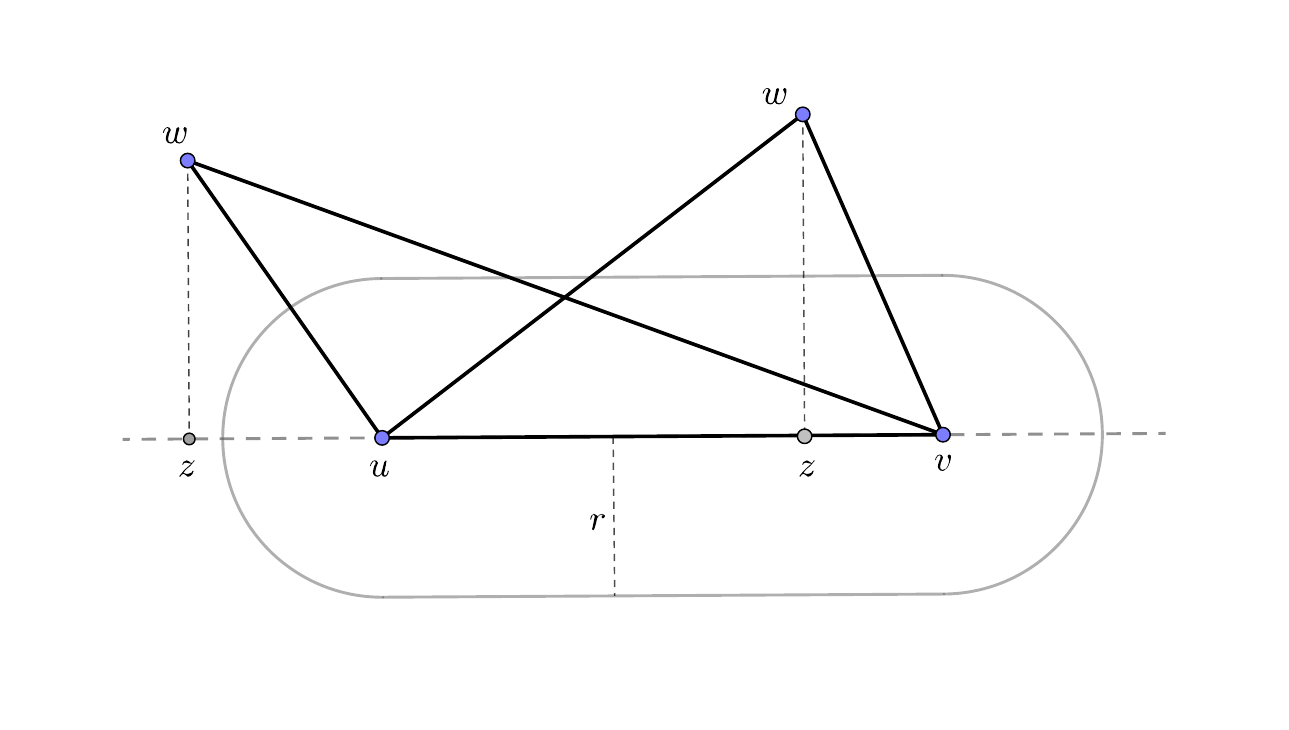}
\caption{Geometric argument used in proof of Lemma \ref{h_condition} with respect to (Pythag).}
\label{fig:pythag}
\end{figure}

If $z$ is not on the line segment connecting $u$ and $v$, then either (a) $|uw|\geq |uv|$ and $|wv|\geq r\geq r^\alpha$ or (b) $|wv|\geq |uv|$ and $|uw|\geq r\geq r^\alpha$.  In either case, $h(x)[|uw|+|wv|] \geq h(x)|uv| + m_1r^\alpha$ and (Pythag) is satisfied.

Suppose now $z$ is on the line segment connecting $u$ and $v$.  In the triangle,  $uw$ is the hypotenuse, and $wz$ and $uz$ are the legs, such that $|uw|^2 = |wz|^2 + |uz|^2$.  Hence, as $|uw|<1$, all the lengths are less than $1$.  We are given that $|wz|\geq r$ and $|uz|\leq |uv|\leq cr^{1/\alpha}$.  Then, as $r\leq |wz|<1$ and $2<\min\{2\alpha, \alpha +1/\alpha \}$, we have
\begin{eqnarray*}
|uw|^2 &=& |uz|^2 + |wz|^2\\
& \geq & |uz|^2 + (1/9)r^{2\alpha} + 2(1/3)r^{\alpha+ 1/\alpha}\\
&\geq& |uz|^2 + (9\max\{c,1\}^2)^{-1}r^{2\alpha} + 2(3\max\{c,1\})^{-1}|uz|r^\alpha\\
&=&  \big(|uz| + (3\max\{c,1\})^{-1}r^\alpha\big)^2,
\end{eqnarray*}
and so $|uw| \geq |uz| + (3\max\{c,1\})^{-1}r^\alpha$.

A similar inequality, $|wv| \geq |vz| + (3\max\{c,1\})^{-1}r^\alpha$, holds with the same argument.  Hence,
$h(x)|uw| + h(x)|wv| \geq h(x)|uv| + C(m_1,c)r^\alpha$.
\end{proof}

We will also need to limit the decay properties of $\epsilon_n$ for the next result; see Subsection \ref{remarks} for comments on this limitation.
Namely, we will suppose that $\epsilon_n$ is in form $\epsilon_n = n^{-\delta}$ where 
\begin{equation}
\label{additionaleps}
\delta>\max\{[(2-\alpha^2)\eta + d]^{-1}, [(\alpha(d-1) + 1]^{-1}\},
\end{equation}
 for an $0<\eta<1$ and $1<\alpha<\sqrt{2}$.

We note that condition \eqref{rateassumption}, when $\epsilon_n$ is in form $\epsilon_n = n^{-\delta}$, yields that $\delta< 1/d$.  However, when $d\geq 2$, we have $\max\{[(2-\alpha^2)\eta + d]^{-1}, [(\alpha(d-1) + 1]^{-1}\} < 1/d$, and so \eqref{additionaleps}, in conjuction with \eqref{rateassumption}, limits $\delta$ to an interval.

\begin{theorem}
\label{p=1approxthm}
	Suppose $d\geq 2$ and  $p=1$, and that $f$ also satisfies (Lip) and (Hilb).
	With respect to realizations $\{ X_n\}_{n\geq 1}$ in a
  probability $1$ set, the minimum values of the
  cost $H_n$  
  converge to the minimum of $F$,
  \[ \lim_{n \to \infty} \min_{\v \in V_n(a,b)} H_n(\v) = 
  \min_{\gamma \in \Omega(a,b)} F(\gamma). \]

Moreover, suppose now that $f$ in addition satisfies (TrIneq) and (Pythag) for an $1<\alpha<\sqrt{2}$, and that $\epsilon_n$ satisfies \eqref{additionaleps}.

Consider a sequence of optimal discrete paths $\v^{(n)} \in \argmin H_n$, and their linear interpolations $\{l_{\v^{(n)}}\}$.  Then, for any subsequence of $\{l_{\v^{(n)}}\}$ and so
of $\{\v^{(n)}\}$, there is a further subsequence of the linear paths which converges uniformly to a limit path
  $\gamma \in \argmin F$, and of the discrete paths in the Hausdorff sense to $S_\gamma$.   

If $F$ has a unique (up to reparametrization) minimizer $\gamma$, the whole sequence of discrete paths converges, $\lim_{n\rightarrow\infty}d_{haus}(\v^{(n)}, S_\gamma) = 0$.
\end{theorem}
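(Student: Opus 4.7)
The plan is to follow a probabilistic $\Gamma$-convergence strategy analogous to Theorem \ref{p>1approxthm}, split into convergence of minima and convergence of minimizers. The central difficulty, absent when $p > 1$, is that $F$ lacks coercivity in the modulus of paths, so reparametrization invariance must be handled explicitly via arclength.

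\textbf{Convergence of minima.} For the limsup bound, I would fix an optimizer $\gamma^* \in \argmin F$, reparametrize by a constant multiple of arclength, sample at times $t_i$ spaced so that $|\gamma^*(t_{i+1}) - \gamma^*(t_i)| < \epsilon_n/2$, and use the uniform density of $\mathcal{X}_n$ (from \eqref{rateassumption} and Proposition \ref{nearestneighbor}) to snap each $\gamma^*(t_i)$ to a nearby sample point, producing $\v^{(n)} \in V_n(a,b)$. The $1$-homogeneity of $f$ gives the simplification $H_n(\v) = \sum_i f(v_i, v_{i+1} - v_i)$, and (Lip) lets one compare $H_n(\v^{(n)})$ with $F(l_{\v^{(n)}}) \to F(\gamma^*)$. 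For the liminf, let $\v^{(n)} \in \argmin H_n$. The bound $H_n(\v) \geq m_1 \sum_i |v_{i+1} - v_i|$ from (A3) uniformly controls the lengths $\ell(l_{\v^{(n)}})$, and using (Lip) together with $p = 1$ I would derive
\[
\bigl|F(l_{\v^{(n)}}) - H_n(\v^{(n)})\bigr| \leq c\, \epsilon_n\, \ell(l_{\v^{(n)}}) \longrightarrow 0.
\]
Reparametrizing each $l_{\v^{(n)}}$ by total arclength yields a uniformly Lipschitz family, so Arzel\`a--Ascoli produces a uniform limit $\tilde\gamma \in \Omega(a,b)$; lower semicontinuity of $F$ under uniform convergence then gives $F(\tilde\gamma) \leq \liminf_k H_n(\v^{(n_k)})$. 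Combined with the limsup this shows the minima converge and that subsequential limits belong to $\argmin F$. The Hausdorff convergence $d_{haus}(\v^{(n_k)}, S_{\tilde\gamma}) \to 0$ along this subsequence then follows, since consecutive vertices of $\v^{(n)}$ lie within $\epsilon_n \to 0$ of $l_{\v^{(n)}}$, which itself converges uniformly to $\tilde\gamma$.

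\textbf{Whole-sequence uniqueness and the role of (TrIneq), (Pythag), \eqref{additionaleps}.} When $F$ has a unique minimizer $\gamma$ up to reparametrization, asserting $d_{haus}(\v^{(n)}, S_\gamma) \to 0$ for the whole sequence is the serious step. The worry is that reparametrization invariance allows an optimal discrete path to contain persistent lateral wiggles of small amplitude invisible to the $H_n \approx F(l_\v)$ comparison alone. To rule these out I would invoke (TrIneq) and (Pythag) in tandem: for a triple $(v_{i-1}, v_i, v_{i+1})$ whose middle vertex lies distance $r$ off the chord $v_{i-1}v_{i+1}$, (TrIneq) allows skipping $v_i$ without increasing cost elsewhere, while (Pythag) forces an excess cost of order $r^\alpha$ from the skip. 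Summing these penalties over the $O(\epsilon_n^{-1})$ edges of an optimal path converts them into a global lateral deviation bound, provided $\alpha < \sqrt{2}$ and $\epsilon_n = n^{-\delta}$ with $\delta$ as in \eqref{additionaleps}; the two terms in that max balance the Pythagorean gain against the slack from replacing continuum points by $\epsilon_n$-close random samples, and the dimensional counting of lateral displacements in $\R^d$.

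The main obstacle I expect is this last step: converting the pointwise (Pythag) penalty into a \emph{uniform} Hausdorff bound valid along every optimal discrete path simultaneously, while tracking the random geometric fluctuations at scale $\epsilon_n = n^{-\delta}$. It is this balancing, rather than the $\Gamma$-convergence arguments above, that forces the precise exponent condition in \eqref{additionaleps}.
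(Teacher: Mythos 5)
Your argument for the convergence of the minima is sound and in fact takes a cleaner route than the paper: you compare $H_n$ directly with the linear cost $L_n$ via (Lip) (the paper instead detours through the quasinormal cost $G_n$, which is why it needs (Hilb) via Proposition \ref{quasilinearcompare}), and your liminf via arclength reparametrization plus Lemma \ref{liminf} is legitimate, since $\int_0^1|\dot l_{\v^{(n)}}|\,dt \leq \min H_n/m_1$ is bounded and $F$ is reparametrization-invariant when $p=1$. This also yields subsequential Hausdorff convergence of $\v^{(n)}$ to $S_{\tilde\gamma}$ with $\tilde\gamma\in\argmin F$.

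The genuine gap is in the second half. The theorem asserts uniform convergence of the \emph{equal-time} linear interpolations $l_{\v^{(n)}}$ themselves, and your reparametrization trick only gives compactness of the constant-speed versions: $l_{\v^{(n)}}$ has speed $m(n)\,|v^{(n)}_{i+1}-v^{(n)}_i|$ on each segment, so equicontinuity fails unless one shows the total vertex count $m(n)$ is $O(1/\epsilon_n)$, i.e. that an optimal path cannot accumulate many vertices in a small region. This is precisely where the paper spends (TrIneq), (Pythag) and \eqref{additionaleps}: partitioning $D$ into boxes of side $\sim\epsilon_n$, showing via a localization estimate plus (TrIneq)+(Pythag) that within each box the optimal path is confined to an $\epsilon_n^\alpha$-tube around the chord joining its first and last visits (the excess cost $C\epsilon_n^{\alpha^2}$ of exiting the tube must dominate the localization error $C'\epsilon_n^{2-\beta}$, forcing $\alpha<\sqrt2$), and then a Binomial tail bound showing such tubes contain $O(1)$ sample points (this is where $\delta>[\alpha(d-1)+1]^{-1}$ enters). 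Your proposal instead deploys (TrIneq)/(Pythag) on the whole-sequence uniqueness claim, but that step is actually routine — once every subsequence has a further Hausdorff-convergent subsequence with limit $S_\gamma$ for the unique (up to reparametrization) minimizer, the whole sequence converges by the standard argument, with no extra hypotheses. Meanwhile your sketch of "summing $r^\alpha$ penalties over $O(\epsilon_n^{-1})$ edges into a global lateral deviation bound" does not correspond to a workable estimate and, more importantly, does not address the vertex-counting problem that the uniform-convergence claim actually requires.
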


See Figure \ref{fig2} for an example of an $H_n$-cost geodesic path.

As noted in the introduction, when $d=p=1$, $H_n(\v)$ is a certain Riemann sum.  Let $\w\in \argmin H_n$, and observe by optimality that $\w= (w_0,\ldots, w_m)\in V_n(a,b)$ satisfies $w_i<w_{i+1}$ for $0\leq i\leq m-1$.  Hence, by $1$-homogenity of $f$, $H_n(\w) = \sum_{i=0}^{m-1} f(w_i, 1)|w_{i+1}-w_i|$, and $H_n(\w)$ strongly approximates the integral $\int_a^b f(s, 1)ds$, given that the partition length $\max |w_{i+1}-w_i|\leq \epsilon_n \to 0$.  For this reason, this case is not included in the above theorem.

\medskip
\noindent
{\it Linear interpolating costs when $p=1$.}  Having now introduced (Lip), (Hilb), (TrIneq) and (Pythag), we address the case $d\geq 2$ and $p=1$ with respect to the cost $L_n$.

\begin{theorem}
\label{linear_p=1approxthm}
	Suppose $d\geq 2$ and  $p=1$, and that $f$ also satisfies (Lip) and (Hilb).
	With respect to realizations $\{ X_n\}_{n\geq 1}$ in a
  probability $1$ set, the minimum values of the
  cost $L_n$  
  converge to the minimum of $F$,
  \[ \lim_{n \to \infty} \min_{\gamma \in \Omega^l_n(a,b)} L_n(\gamma) = 
  \min_{\gamma \in \Omega(a,b)} F(\gamma). \]

Moreover, suppose now that $f$ in addition satisfies (TrIneq) and (Pythag) for an $1<\alpha<\sqrt{2}$, and that $\epsilon_n$ satisfies \eqref{additionaleps}.

Consider a sequence of optimal paths $l_{\v^{(n)}} \in \argmin L_n$.
 Then, for any subsequence of $\{l_{\v^{(n)}}\}$ and so 
 of the discrete paths $\{\v^{(n)}\}$, there is a further subsequence of the linear paths which converges uniformly to a limit path
  $\gamma \in \argmin F$, and of the discrete paths in the Hausdorff sense to $S_\gamma$.   

If $F$ has a unique (up to reparametrization) minimizer $\gamma$, the whole sequence of discrete paths converges, $\lim_{n\rightarrow\infty}d_{haus}(\v^{(n)}, S_\gamma) = 0$.
\end{theorem}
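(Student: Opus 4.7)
The strategy is to reduce Theorem~\ref{linear_p=1approxthm} to Theorem~\ref{p=1approxthm} by comparing the linear cost $L_n$ with the Riemann cost $H_n$. Starting from \eqref{L_n_equation} with $p=1$ and using (Lip) together with $|l_{v_i,v_{i+1}}(t)-v_i|\le|v_{i+1}-v_i|\le\epsilon_n$, a direct computation gives, for every $\v=(v_0,\ldots,v_m)\in V_n(a,b)$,
\[
|L_n(l_\v) - H_n(\v)|\;\le\; \frac{c}{2}\,\epsilon_n\sum_{i=0}^{m-1}|v_{i+1}-v_i|.
\]
Moreover the polygonal length is controlled by the costs themselves: $f\ge m_1|v|$ implies $\sum|v_{i+1}-v_i|\le L_n(l_\v)/m_1$ and $\sum|v_{i+1}-v_i|\le H_n(\v)/m_1$. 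Hence $|L_n-H_n|=O(\epsilon_n)$ along any sequence of bounded cost.

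\textbf{Convergence of minima.} The trivial inclusion $\Omega_n^l(a,b)\subset\Omega(a,b)$ gives $\min L_n\ge\min F$ for every $n$. For the matching upper bound, pick $\v^{(n)}\in\argmin H_n$; by the first conclusion of Theorem~\ref{p=1approxthm}, $H_n(\v^{(n)})\to\min F$, so the preceding comparison yields $L_n(l_{\v^{(n)}})\le H_n(\v^{(n)})+O(\epsilon_n)\to\min F$. Thus $\limsup\min L_n\le\min F$, and only (Lip) and (Hilb) have been used (the latter entering through the invocation of Theorem~\ref{p=1approxthm}).

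\textbf{Path and Hausdorff convergence.} Under the additional hypotheses (TrIneq), (Pythag) and \eqref{additionaleps}, I would run, on the $L_n$ side, the same compactness / liminf / ``no long excursion'' machinery developed for Theorem~\ref{p=1approxthm}. The steps are: (a) for $l_{\v^{(n)}}\in\argmin L_n$, use $L_n(l_{\v^{(n)}})\to\min F$ together with $f\ge m_1|v|$ to get a uniform bound on the polygonal length, then (exploiting reparametrization invariance for $p=1$) extract a uniformly convergent subsequence via Arzel\`a-Ascoli with limit $\gamma\in\Omega(a,b)$; (b) identify $\gamma\in\argmin F$ via lower semicontinuity of $F$ under uniform convergence of uniformly Lipschitz paths, which holds by convexity (A1) and the fact that $L_n=F$ on $\Omega_n^l(a,b)$; (c) rule out long excursions of $\v^{(n)}$ off $S_\gamma$: if a vertex $v^{(n)}_j$ stood at distance $\ge r$ from the chord of its neighbors, (TrIneq) and (Pythag) would inflate the cost by an amount $\gtrsim r^\alpha$ at that site, while the number of possible detour triples in $D$ is polynomial in $r^{-1}$ (volume plus density bound). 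The rate \eqref{additionaleps} is precisely tuned so that the accumulated excess cannot be absorbed by the $O(\epsilon_n)$ slack afforded by the $L_n\approx H_n$ comparison, forcing $r\to 0$ and yielding $d_{haus}(\v^{(n)},S_\gamma)\to 0$. Uniqueness of $\gamma$ up to reparametrization upgrades subsequential to full convergence.

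\textbf{Main obstacle.} The delicate point is step (c). The Pythagorean inequality (Pythag) is stated at vertex triples and is natural for the Riemann cost $H_n$; for $L_n$ it must be transferred segment-by-segment along the linear interpolant. The global comparison $|L_n-H_n|\le c\epsilon_n\sum|v_{i+1}-v_i|$ is too coarse for this: one needs a localized version that, at each potential detour site, converts the straight-line integrand $\int_0^1 f(l_{v_i,v_{i+1}}(t),\cdot)\,dt$ into $f(v_i,\cdot)$ with error controlled by the \emph{segment length}, not the total length. Calibrating this local comparison against the scale $r^\alpha$ in (Pythag) without degrading the exponent constraints in \eqref{additionaleps} is where the bulk of the technical work lies; this is the direct $L_n$-analogue of the corresponding obstacle in the proof of Theorem~\ref{p=1approxthm}.
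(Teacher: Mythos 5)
Your first part (convergence of $\min L_n$ to $\min F$) is correct and essentially the paper's argument: the paper's Proposition \ref{p=1_minapproxthm} derives \eqref{l_eq}, the same (Lip)-based comparison $|L_n(l_\v)-H_n(\v)|\leq c\,\epsilon_n\sum_i|v_{i+1}-v_i|\leq c\,m_1^{-1}\epsilon_n\min(L_n,H_n)$, and then transfers $\min H_n\to\min F$ to $\min L_n$ (the paper uses Proposition \ref{minprop} for a two-sided bound; your observation that $\min L_n\geq\min F$ is free since $L_n$ is a restriction of $F$ is a harmless simplification).

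The second part has a genuine gap, and it sits exactly where you placed your "main obstacle" --- but the obstacle is not quite the one you identify. The theorem asserts uniform convergence of a subsequence of the linear paths $l_{\v^{(n)}}$ \emph{in their given equal-time parametrization} (each segment traversed in time $1/m$). Bounded polygonal length plus arclength reparametrization, as in your step (a), gives compactness only of \emph{reparametrized} versions; it does not yield equicontinuity of $l_{\v^{(n)}}$ itself, whose Lipschitz constant is $\max_i m|v_{i+1}^{(n)}-v_i^{(n)}|\leq m\epsilon_n$ and can blow up if the optimal path accumulates many short steps in a small region --- a scenario that bounded arclength does not exclude when $p=1$. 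The entire role of (TrIneq), (Pythag) and the exponent restriction \eqref{additionaleps} in the paper is to rule this out: Lemma \ref{numvisited} bounds the number of $\epsilon_n$-boxes visited by $C/\epsilon_n$ using only the cost lower bound; Lemma \ref{initialwbounds} bounds, via a Binomial tail and Borel--Cantelli, the number of consecutive vertices between two $\epsilon_n$-close ones by $\epsilon_n^{-\beta}$; Lemma \ref{exitcost} uses (TrIneq) and (Pythag) on the localized integrand $f(v_i,\cdot)$ to show that leaving an $\epsilon_n^{\alpha}$-tube around a chord costs at least $C\epsilon_n^{\alpha^2}$, which beats the localization error $O(\epsilon_n^{2-\beta})$ precisely because $\alpha^2<2$; and Lemma \ref{kbound} then traps the intermediate vertices in a thin cylinder of volume $O(\epsilon_n^{\alpha(d-1)+1})$ and uses a second union/Binomial bound (this is where the lower bound on $\delta$ in \eqref{additionaleps} enters) to conclude at most $K$ vertices per box. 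Together these give $m\leq KC/\epsilon_n$, hence a uniform Lipschitz bound $KC$ for $l_{\v^{(n)}}$ and Arzel\`a--Ascoli in the correct parametrization; the Hausdorff statement then follows from uniform convergence via Proposition \ref{unifhausdorff}, with no further use of (Pythag). Your step (c), by contrast, tries to use (Pythag) directly to control the Hausdorff distance to $S_\gamma$, which is not the mechanism needed and whose counting ("detour triples polynomial in $r^{-1}$") is not an argument. In short: the quantitative vertex-count bound per box is the missing key lemma, and without it the compactness and uniform-convergence claims of the theorem are not established.
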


\subsection{Remarks}  We make several comments about the assumptions and related issues.
\label{remarks}

\vskip .1cm {\textbf 1.} {\it Domain.}  The requirements that $D$
should be closed and connected are needed for the quasinormal path
results in \cite{buttazzo1998one} and \cite{hildebrandt} to hold.
Also, the proof of Proposition \ref{nearestneighbor}, on the maximum
distance to a nearest neighbor vertex, requires that the domain boundary
should be Lipschitz, true for convex domains. The convexity of the domain also ensures that all the linearly interpolated paths are within the domain, and allows comparison with quasinormal ones, which by definition are constrained in the domain, as in the proof of the `limsup' inequality, Lemma \ref{limsup}.   In addition, a bound on
the domain allows the Arzela-Ascoli equicontinuity criterion to be
applied in the compactness property, Lemma \ref{compactness}.

\vskip .1cm

{\textbf 2.} {\it Ellipticity of $\rho$.}  The bound on $\rho$ is useful to compare $\nu$ to the uniform distribution in the nearest-neighbor map result, Proposition \ref{nearestneighbor}, as well as in bounding the number of points in certain sets in Lemma \ref{kbound}.  We note, as our approximating costs, $L_n$, $G_n$, $H_n$, do not involve density estimators, our results do not depend on the specifics of $\rho$, unlike for `density based distances' discussed in \cite{Sajama}.

\vskip .1cm

{\textbf 3.}  {\it Decay of $\epsilon_n$ \eqref{rateassumption}.}  Intuitively, the rate $\epsilon_n$ cannot vanish too quickly, as then the graph may be disconnected with respect to a postive set of realizations $\{X_i\}$.  However, the estimate in \eqref{rateassumption} ensures that the graph $\mathcal{G}_n(a,b)$ is connected for all large $n$ almost surely--see Proposition \ref{nearestneighbor}.  This is a version of the `$\delta$-sampling' condition in \cite{bernstein2000graph}, and is related to connectivity estimates in continuum percolation \cite{penrose}.  Moreover, we note, the prescribed rate yields in fact that any vertex $X_i$ will have degree tending to infinity as $n$ grows, as long as $\rho$ is elliptic:  One calculates that the mean number of points in the $\epsilon_n$ ball around $X_i$ is of order $n\epsilon_n^d$ which grows faster than $\log(n)$.

\vskip .1cm

{\textbf 4.}  {\it Assumptions (A0)-(A3) on $f$.}  These are somewhat standard assumptions to treat parametric variational integrals such as $F$ (cf. \cite{buttazzo1998one} and \cite{hildebrandt}), which include the basic case $f(x,v) = |v|^p$.

\vskip .1cm

{\textbf 5.}  {\it Assumption on $p$.}  The assumption $p\geq 1$ is useful to show existence of quasinormal paths in Proposition \ref{existence}, and compactness of minimizers.   The case $p<1$ is more problematic in this sense and not discussed here.  

\vskip .1cm

 {\textbf 6.} {\it Extra assumptions in Theorems \ref{p=1approxthm} and \ref{linear_p=1approxthm}. } The main difficulty is in showing compactness of optimal $H_n$ and $L_n$ paths when $p=1$.  With respect to Theorem \ref{p>1approxthm}, when $p>1$, the form of the cost allows a Holder's inequality argument to deduce equicontinuity of the paths, from which compactness follows using Ascoli-Arzela's theorem.  However, there is no such coercivity when $p=1$.  Yet, with the additional assumptions, one can approximate a geodesic locally by straight lines.  Several geometric estimates on the number of points in small windows around these straight lines are needed to ensure accuracy of the approximation, for which the upperbound on $\epsilon_n$ in \eqref{additionaleps} is useful.

\vskip .1cm

{\textbf 7.} {\it Unique minimizers of $F$.}
Given that our results achieve their strongest form when $\argmin F$ consists of a unique minimizing path, perhaps up to reparametrization, we comment on this possibility.  Under suitable smoothness conditions on the integrand $f$, uniqueness criteria for ordinary differential equations allow to deduce from the Euler-Lagrange equations, $d/dt \nabla_v f(\gamma(t), \dot\gamma(t)) = \nabla_x f(\gamma(t), \dot\gamma(t))$, that there is a unique geodesic between points $a,b$ sufficiently close together (cf. Proposition 5.25 in \cite{buttazzo1998one} and Chapter 5 in \cite{Burago}).  On the other hand, for general $a,b$, `nonuniqueness' may hold depending on the structure of $f$.  For instance, one may construct an $f$, satisfying the `standing assumptions', with several $F$-minimizing paths, by penalizing portions of $D$ so as to induce `forks'.

\vskip .1cm

{\textbf 8.} {\it $k$-nearest neighbor graphs.} It is not clear if our approximation results, say Theorem \ref{p=1approxthm}, hold with respect to the $k$-nearest-neighbor graph with $k$ bounded--that is the graph formed by attaching edges from a vertex to the nearest $k$ points.  For instance, when $\{X_i\}$ is arranged along a fine regular grid, $f(x,v)\equiv |v|$, $d=2$, and $k=4$, the optimal $H_n$ route of moving from the origin to $(1,1)$ is on a `staircase' path with length $\sim{2}$, no matter how refined the grid is, yet the Euclidean distance is $\sqrt{2}$.  In this respect, the random geometric graph setting of Theorem \ref{p=1approxthm} allows enough choices among nearby points, as long as $\rho$ is elliptic, for the optimal path to approximate the straight line from $(0,0)$ to $(1,1)$.  It would be of interest to investigate the extent to which our results extend to $k$-nearest neighbor graphs.

\vskip .1cm

\section{Proof of Theorems \ref{theorem1}, \ref{theorem2}, and Corollary \ref{hausdorffcor}}
\label{interpolating_proofs}

As mentioned in the introduction, the proof of Theorems \ref{theorem1} and \ref{theorem2} relies on a probabilistic `Gamma
Convergence' argument. After establishing some basic notation and results on quasinormal minimizers, we
present three main proof elements, `liminf inequality', `limsup inequality' and `compactness', in the following Subsections.
Proofs of Theorems \ref{theorem1} and \ref{theorem2}, and Corollary \ref{hausdorffcor} are the end of the Subsection.
\subsection{Preliminaries}

Define a `nearest-neighbor' map $T_n: D \to \mathcal{X}_n$ where, for $x\in D$,
$T_n(x)$ is the point of $\mathcal{X}_n$ closest to $x$ with
respect to the Euclidean distance. In the event of a tie, we adopt the
convention that $T_n(x)$ is that nearest neighbor in $\mathcal{X}_n$
with the smallest subscript.  Since $\mathcal{X}_n$ is random, we note $T_n$ and the distortion
$$\|T_n - Id\|_{\infty} = \sup_{x \in D} |T_n(x) - x|
=\|T_n - Id\|_{\infty} = \sup_{y \in D} \min_{1 \leq i \leq n} |X_i -
y|$$
are also random. In Proposition \ref{nearestneighbor} of the appendix, we show for $a,b\in D$ that, almost surely, 
\begin{equation}
\label{graph_line}
\text{the graph }\mathcal{G}_n(a,b) \text{ is connected for all } a,b\in D \text{ and all large }n.
\end{equation}
  Moreover, it is shown there that
exists a constant $C$ such that almost surely,
\begin{equation} \label{tnrate}
 \limsup_{n \to \infty} \frac{\|T_n - Id\|_{\infty} n^{1/d}}{(\log n)^{1/d}} \leq C.
\end{equation}
Throughout, we will be working with realizations where both \eqref{graph_line} and \eqref{tnrate} are satisfied.  Let 
\begin{equation*}
A_1 \text{ be the probability }1\text{ event that } \eqref{graph_line} \text{ and } \eqref{tnrate} \text{ hold}.
\end{equation*}
We observe, when the decay rate \eqref{rateassumption} on $\epsilon_n$ holds, on the set of realizations $A_1$, we have
$\lim_{n\rightarrow\infty} \|T_n - Id\|_\infty/\epsilon_n = 0$.

To rule out certain degenerate configurations of points, in $d\geq 2$, let
\begin{equation*}
\label{A_2}
  A_2 \text{ be the event that } X_k \not \in S_{\gamma_{X_i,X_j}} \forall \text{ distinct } i,j,k \in \mathbb{N}.
\end{equation*}
Since the $\{X_i\}$ come from a continuous distribution, and the image of the Lipschitz path $\gamma_{X_i,X_j}$ in $D\subset \mathbb{R}^d$, when $d\geq 2$, is of lower dimension, $A_2$ has
probability $1$.

Recall the definitions of quasinormal and linear paths $\gamma_{u,v}$ and $l_{u,v}$.

\begin{prop} \label{lgammacompare} Let $m_1$ and $m_2$ be the
  constants in \eqref{fbounds}. Then for $u,v\in D$,
    \begin{equation} \label{dfcomparison}
    m_1 | u - v|^p  \leq d_f(u,v) \leq m_2 |u - v|^p.
  \end{equation}
  Further, the path $\gamma_{u,v}$ satisfies, for $0\leq s,t\leq 1$, that
  \begin{equation} \label{uniformlipschitz} |\gamma_{u,v}(s) -
    \gamma_{u,v}(t)| \leq \big({m_2}/{m_1})^{1/p} |u-v| |s-t|,
  \end{equation}
  and
  \begin{equation} \label{segmentdistance}
    \sup_{0 \leq t \leq 1} |\gamma_{u,v}(t) - l_{u,v}(t)| \leq \Big(\big({m_2}/{m_1}\big)^{1/p} + 1\Big) |u-v|.
  \end{equation}
\end{prop}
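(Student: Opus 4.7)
\textbf{Proof proposal for Proposition \ref{lgammacompare}.}

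The plan is to establish the three claims in order, using (A3) together with convexity of $D$ and a Jensen-type estimate, then exploiting the defining property of quasinormal minimizers to bound their speed.

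First I would obtain the comparison \eqref{dfcomparison}. For the upper bound, use the linear path $l_{u,v}$ as a competitor: since $D$ is convex, $l_{u,v}([0,1]) \subset D$, and $\dot l_{u,v}(t) = v-u$, so by the right-hand side of \eqref{fbounds},
\[
d_f(u,v) \;\leq\; F(l_{u,v}) \;=\; \int_0^1 f\bigl(l_{u,v}(t),\,v-u\bigr)\,dt \;\leq\; m_2|v-u|^p.
\]
For the lower bound, let $\gamma \in \Omega(u,v)$ be arbitrary. By the left-hand side of \eqref{fbounds} and Jensen's inequality applied to the convex function $r \mapsto |r|^p$ on $\mathbb{R}^d$ (valid since $p \geq 1$),
\[
F(\gamma) \;\geq\; m_1 \int_0^1 |\dot\gamma(t)|^p\,dt \;\geq\; m_1 \Bigl|\int_0^1 \dot\gamma(t)\,dt\Bigr|^p \;=\; m_1 |v-u|^p.
\]
Taking the infimum over $\gamma$ gives the lower bound.

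Next I would prove the Lipschitz estimate \eqref{uniformlipschitz}. By definition, $\gamma_{u,v}$ is a quasinormal optimal path, so there exists $c \geq 0$ with $f(\gamma_{u,v}(t),\dot\gamma_{u,v}(t)) = c$ for a.e.\ $t$, and $c = \int_0^1 f(\gamma_{u,v},\dot\gamma_{u,v})\,dt = d_f(u,v)$. Combining with \eqref{dfcomparison} and the left-hand side of \eqref{fbounds} gives
\[
m_1 |\dot\gamma_{u,v}(t)|^p \;\leq\; f(\gamma_{u,v}(t),\dot\gamma_{u,v}(t)) \;=\; d_f(u,v) \;\leq\; m_2 |v-u|^p
\]
for a.e.\ $t$, so $|\dot\gamma_{u,v}(t)| \leq (m_2/m_1)^{1/p}|u-v|$. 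Integrating over $[s,t]$ yields \eqref{uniformlipschitz}.

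Finally, for \eqref{segmentdistance}, apply the triangle inequality at each $t$ relative to the common endpoint $u = \gamma_{u,v}(0) = l_{u,v}(0)$:
\[
|\gamma_{u,v}(t) - l_{u,v}(t)| \;\leq\; |\gamma_{u,v}(t) - u| + |l_{u,v}(t) - u|.
\]
The first term is at most $(m_2/m_1)^{1/p}|u-v|\,t$ by \eqref{uniformlipschitz} with $s = 0$, and the second is $t|v-u|$, giving the claimed bound after taking the supremum over $t$. I do not anticipate a genuine obstacle here; the only subtlety is ensuring the Jensen step in part (1) uses $p \geq 1$, and that quasinormal optimality is invoked correctly so that the constant $c$ equals $d_f(u,v)$ rather than being merely bounded by it.
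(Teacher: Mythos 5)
Your proposal is correct and follows essentially the same route as the paper: (A3) plus the fact that $\inf\int_0^1|\dot\gamma|^p\,dt=|v-u|^p$ (which you make explicit via the linear competitor and Jensen) gives \eqref{dfcomparison}, and the quasinormal identity $f(\gamma_{u,v},\dot\gamma_{u,v})=c=d_f(u,v)$ gives the speed bound \eqref{uniformlipschitz}. The only cosmetic difference is in \eqref{segmentdistance}, where you argue directly via the triangle inequality through the endpoint $u$, whereas the paper runs an equivalent contradiction argument through the endpoint $v$; both are fine.
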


\begin{proof}

For a Lipschitz path $\gamma$ between $u$ and $v$, we have $ m_1 |\dot\gamma(t)|^p \leq f(\gamma(t),\dot\gamma(t))\leq m_2|\dot\gamma(t)|^p$ by \eqref{fbounds}.  Also, $\inf_{\gamma\in \Omega(a,b)} \int_0^1|\dot\gamma(t)|^pdt = |b-a|^p$ by a standard calculus of variations argument (see also Proposition \ref{existence}).  So, by taking infimum over $\gamma$, we obtain \eqref{dfcomparison}.

  Suppose now $\gamma=\gamma_{u,v}$ is quasinormal, so that
  $f(\gamma(t),\dot{\gamma}(t)) = c$ for some constant $c$
  and a.e. $t$. Integrating, and noting \eqref{dfcomparison}, gives $c = \int_0^1 f(\gamma,\dot{\gamma})\, dt = d_f(u,v)\leq m_2|u-v|^p$.
	On the other hand, by \eqref{fbounds}, $m_1|\dot\gamma(t)|^p\leq f(\gamma(t),\dot\gamma(t)) = c$.  Hence, $|\dot\gamma(t)|\leq (m_2/m_1)^{1/p}|u-v|$ and \eqref{uniformlipschitz} follows.

  Finally, to establish \eqref{segmentdistance}, suppose that there is
  a $t$ such that
  $|\gamma_{u,v}(t) - l_{u,v}(t)| > \big((m_2/m_1)^{1/p} + 1\big)|u-v|$. Then,
  considering that $|l_{u,v}(t) - v| \leq |u-v|$, an application of
  the triangle inequality gives
  $|\gamma_{u,v}(t) - v|  > (m_2/m_1)^{1/p}|u-v|$.   However, by \eqref{uniformlipschitz}, 
  $|\gamma_{u,v}(t) -v| = |\gamma_{u,v}(t) - \gamma_{u,v}(1)| \leq (m_2/m_1)^{1/p} |u-v||t-1|\leq (m_2/m_1)^{1/p}|u-v|$, a contradiction. 
Thus, inequality \eqref{segmentdistance} holds.
\end{proof}

\subsection{Liminf Inequality}

A first step in getting some control over the limit cost $F$ in
terms of the discrete costs is the following bound.

\begin{lemma}[Liminf Inequality] \label{liminf} Consider $\gamma \in \Omega(a,b)$, and suppose
  we have a sequence of paths $\gamma_n \in \Omega(a,b)$ such that
  $$
    \lim_{n \to \infty} \sup_{0 \leq t \leq 1} |\gamma_n(t) - \gamma(t)| = 0 \hspace{0.5cm} \text{ and } \hspace{0.5cm}
    \sup_n \int_0^1 |\dot{\gamma}_n|^p\, dt < \infty.
  $$
  Then,
  $F(\gamma) \leq \liminf_{n \to \infty} F(\gamma_n)$.
\end{lemma}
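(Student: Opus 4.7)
The plan is to deduce the lemma from a classical lower semicontinuity theorem for convex integral functionals in one dimension. Since the conclusion is about a liminf, we may pass to a subsequence (still indexed by $n$) along which $F(\gamma_n) \to \liminf_{n\to\infty} F(\gamma_n)$, and assume without loss of generality that this liminf is finite, as otherwise there is nothing to prove.

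Next I would extract a weakly convergent subsequence of the derivatives. When $p > 1$, the uniform bound $\sup_n \int_0^1 |\dot\gamma_n|^p\, dt < \infty$ together with Banach-Alaoglu gives a further subsequence $\dot\gamma_{n_k} \rightharpoonup g$ weakly in $L^p([0,1];\mathbb{R}^d)$. Combined with the uniform convergence $\gamma_n \to \gamma$, one identifies $g$ with $\dot\gamma$ by passing to the limit in the identity $\int_0^1 \dot\gamma_n \phi \, dt = -\int_0^1 \gamma_n \phi'\, dt$ for every $\phi \in C_c^\infty((0,1))$; note $\dot\gamma \in L^p$ since $\gamma \in \Omega(a,b)$ is Lipschitz. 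The $p=1$ case is analogous but carried out in the space of $\mathbb{R}^d$-valued Radon measures on $[0,1]$, where the uniform total-variation bound yields weak-$*$ compactness and the distributional limit is again $\dot\gamma$.

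The core of the argument is then to invoke a lower semicontinuity result of Tonelli-Serrin type (see, e.g., Buttazzo \cite{buttazzo1998one}): for a nonnegative integrand $f(x,v)$ continuous in $(x,v)$ and convex in $v$, the functional $\gamma \mapsto \int_0^1 f(\gamma, \dot\gamma)\, dt$ is sequentially lower semicontinuous along the convergence mode just obtained, namely $\gamma_n \to \gamma$ uniformly with $\dot\gamma_n \rightharpoonup \dot\gamma$ weakly in $L^p$. The hypotheses (A0) and (A1) are precisely what this theorem requires, so we conclude $F(\gamma) \leq \liminf_k F(\gamma_{n_k}) = \liminf_n F(\gamma_n)$, as desired.

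The main obstacle I anticipate is the $p=1$ case, where weak $L^1$ compactness of $\{\dot\gamma_n\}$ can fail through concentration phenomena, so one cannot literally stay within the $W^{1,p}$ framework. The cleanest workaround is to interpret the problem in the space of BV paths and apply Reshetnyak-type lower semicontinuity for convex $1$-homogeneous integrands; alternatively, one can approximate $f$ from below by an increasing sequence of Lipschitz convex-in-$v$ integrands (for instance via sup-convolutions $f_k(x,v) = \sup_w\{f(x,w) - k|v-w|\}$), apply the $p>1$-style argument to each $f_k$, and then invoke monotone convergence. Either route leans on the same continuity-plus-convexity structure of $f$ guaranteed by (A0) and (A1).
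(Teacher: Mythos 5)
Your proposal is correct and takes essentially the same route as the paper, whose proof simply cites the classical Tonelli--Serrin lower semicontinuity theorem (Theorem 3.5 and the subsequent Remark 2 of \cite{buttazzo1998one}) for nonnegative integrands continuous in $(x,v)$ and convex in $v$, under exactly the convergence you set up. (One minor slip in your optional fallback: the Lipschitz approximation of $f$ from below is the inf-convolution $f_k(x,v)=\inf_w\{f(x,w)+k|v-w|\}$; the expression $\sup_w\{f(x,w)-k|v-w|\}$ you wrote majorizes $f$, but this does not affect your main argument.)
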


\begin{proof}
  A sufficient condition for this inequality, a `lower
  semicontinuity' property of $F$, to hold is that $f(x,v)$ be
  jointly continuous and convex in $v$. See Theorem 3.5 (and the
  subsequent Remark 2) of \cite{buttazzo1998one} for more discussion
  on this matter.
\end{proof}

\subsection{Limsup Inequality}

To make effective use of the liminf inequality, we need to identify a
sufficiently rich set of sequences for which a reverse inequality holds. 
To this end, we develop certain approximations of Lipschitz paths by
piecewise linear or piecewise quasinormal paths.

The following result gives a method for recovering an element of
$V_n(a,b)$ from a suitable element of $\Omega(a,b)$.

\begin{prop} \label{ctwoapprox}
Suppose, for constants $c,C$, that $\gamma \in \Omega(a,b)$ satisfies
\begin{equation} \label{growthbound}
  c \leq \frac{|\gamma(s) - \gamma(t) |}{|s-t|} \leq C, 
\end{equation}
for all $0 \leq s < t \leq 1$.  Let $N=N(n) = \ceil{K/\epsilon_n}$,
where $K=C+1$ say, and define
$v_0 = a, v_N = b$, and $v_i = T_n \gamma(i/N)$ for $0 < i < N$. 

Then,
with respect to realizations $\{X_i\}$ in the probability 1 set $A_1$, we have $\v=(v_0,\ldots,v_N) \in V_n(a,b)$ for all sufficiently large $n$.
\end{prop}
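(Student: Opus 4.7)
The plan is to verify, one edge at a time, that consecutive vertices $v_i, v_{i+1}$ in the proposed sequence satisfy the edge condition $0 < |v_i - v_{i+1}| < \epsilon_n$ in the graph $\mathcal{G}_n(a,b)$. The two ingredients I will use are: (i) the deterministic bilipschitz bound \eqref{growthbound} on $\gamma$, which controls the spacing of the reference points $\gamma(i/N)$, and (ii) the almost sure decay $\|T_n - Id\|_\infty = o(\epsilon_n)$, which holds on $A_1$ by \eqref{tnrate} combined with the rate assumption \eqref{rateassumption}.

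For the upper bound, I would estimate, using the triangle inequality,
\[
|v_i - v_{i+1}| \leq |T_n\gamma(i/N) - \gamma(i/N)| + |\gamma(i/N) - \gamma((i+1)/N)| + |\gamma((i+1)/N) - T_n\gamma((i+1)/N)| \leq 2\|T_n - Id\|_\infty + \tfrac{C}{N},
\]
for interior indices $0 < i < N-1$. Since $N \geq K/\epsilon_n = (C+1)/\epsilon_n$, one gets $C/N \leq \tfrac{C}{C+1}\epsilon_n$, a strict fraction of $\epsilon_n$; adding the $o(\epsilon_n)$ correction from $\|T_n - Id\|_\infty$ still leaves the sum strictly below $\epsilon_n$ for all large $n$. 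The boundary pieces $|v_0 - v_1|$ and $|v_{N-1} - v_N|$ only involve one application of $T_n$ and are treated identically.

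For the lower bound, the same triangle inequality gives
\[
|v_i - v_{i+1}| \geq |\gamma(i/N) - \gamma((i+1)/N)| - 2\|T_n - Id\|_\infty \geq \tfrac{c}{N} - 2\|T_n - Id\|_\infty,
\]
and since $1/N \geq \epsilon_n/(K+\epsilon_n)$, the dominant term $c/N$ is of order $\epsilon_n$ while the correction is $o(\epsilon_n)$, so the lower bound is strictly positive for large $n$. The boundary edges are again handled the same way, using only one $T_n$ correction.

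I expect no real obstacle here; the whole argument reduces to comparing the mesh scale $1/N \asymp \epsilon_n$ with the distortion $\|T_n - Id\|_\infty$, which the standing hypotheses arrange to be of smaller order. The only mild subtlety is making sure the estimates apply uniformly in $i$ (including the endpoints, where the definition of $v_i$ differs) and using the event $A_1$ on which \eqref{tnrate} holds so that the conclusion is valid for all sufficiently large $n$ simultaneously.
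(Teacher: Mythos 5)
Your proposal is correct and follows essentially the same argument as the paper: bound each consecutive pair via the triangle inequality, using \eqref{growthbound} to get $|\gamma(i/N)-\gamma((i+1)/N)|$ trapped between $c/N$ and $C/N \leq (C/K)\epsilon_n$ with $C/K<1$, and absorb the $\|T_n - Id\|_\infty = o(\epsilon_n)$ distortion (valid on $A_1$ by \eqref{tnrate} and \eqref{rateassumption}), with the endpoint edges treated separately since they involve only one application of $T_n$. No gaps.
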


\begin{proof}
To show that $\v \in V_n(a,b)$, it is
  sufficient to verify that consecutive vertices $v_{i-1}$ and $v_{i}$ are connected by an edge
  in $\mathcal{G}_n(a,b)$, or in other words
  \begin{equation*}
    0 < |v_i - v_{i-1}| < \epsilon_n, \hspace{0.5cm} \text{for } i = 1,\ldots,N.
  \end{equation*}

  We first show that $|v_i - v_{i-1}| < \epsilon_n$.  Note that
  $|\gamma(i/N) - \gamma((i-1)/N)| \leq C/N \leq (C/K)
  \epsilon_n$ and $C/K<1$. For $1 < i < N$, we have
  \begin{align*}
    |v_i - v_{i-1}| &= |T_n \gamma(i/N) - T_n \gamma((i-1)/N)| \\
    &\leq (C/K)\epsilon_n +  2 \|T_n - Id\|_{\infty}.
  \end{align*}
  Similarly, for segments incident to an endpoint $a$ or $b$, we have
  \begin{align*}
    \max(|v_1 - v_0|, |v_N - v_{N-1}|) \leq (C/K)\epsilon_n + \|T_n - Id\|_{\infty}.
  \end{align*}
  In either case, assumption \eqref{rateassumption} on the decay of $\epsilon_n$ implies that, for realizations $\{X_i\}$ in $A_1$,  we
  have $|v_i - v_{i-1}| < \epsilon_n$ for all $1\leq i\leq N$ and
  sufficiently large $n$. 
  
  Now, we show that $0 < |v_i - v_{i-1}|$. By the Lipschitz lower bound on $\gamma$, we have
  \begin{align*}
    |\gamma(i/N) - \gamma((i-1)/N)| \geq c/N > (c/(K+1))\epsilon_n
  \end{align*}
  for $1\leq i\leq N$.
  By a triangle inequality argument, the distance between $v_i$ and $v_{i-1}$ is bounded below
  by $(c/(K+1))\epsilon_n - 2\|T_n - Id\|_{\infty}$, which on the set $A_1$, as $\epsilon_n$ satisfies \eqref{rateassumption} and therefore vanishes slower than $\|T_n - Id\|_{\infty}$, is positive for all large $n$.
\end{proof}

 We now establish
some approximation properties obtained by interpolating paths between points in $\v=(v_0,\ldots, v_N)$.

\begin{prop} \label{uniformrecovery} Fix $\gamma \in \Omega(a,b)$
  satisfying \eqref{growthbound}, and a realization $\{X_i\}$ in the probability $1$ set $A_1$. Let $\gamma_n = \gamma_{\v}$ and
  $l_n = l_{\v}$, where $N=N(n)$ and $\v=(v_0,\ldots,v_N)$ are defined as
  in Proposition \ref{ctwoapprox}. Then, we obtain
  \begin{align*}
    \lim_{n \to \infty} \sup_{0 \leq t \leq 1} |\gamma_n(t) - \gamma(t)| = 0,
    \text{   and   }
    \lim_{n \to \infty} \sup_{0 \leq t \leq 1} |l_n(t) - \gamma(t)| = 0.
  \end{align*}
  In addition,
  \begin{equation} \label{recoverybounds}
    \sup_{n} \|l_n'\|_{\infty}  <\infty, \text{ and }\ \ l_n'(t)\to \gamma'(t)  \text{ for a.e. } t \in [0,1].
\end{equation}

\end{prop}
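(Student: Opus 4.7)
The plan is to derive all four conclusions directly from the construction of $\v=(v_0,\ldots,v_N)$, combined with the quantitative estimates of Proposition \ref{lgammacompare} and the bounds on $|v_{i+1}-v_i|$ worked out in the proof of Proposition \ref{ctwoapprox}. Everything reduces, on each subinterval $[i/N, (i+1)/N]$, to controlling three small quantities: $|v_{i+1}-v_i|$, the nearest-neighbor error $|v_i - \gamma(i/N)|\leq \|T_n-Id\|_\infty$, and the ratio $\|T_n-Id\|_\infty/\epsilon_n$, which vanishes on $A_1$ by \eqref{tnrate} and \eqref{rateassumption}, since $N=\lceil K/\epsilon_n\rceil$.

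For uniform convergence of $l_n$, on $[i/N,(i+1)/N]$ I apply the triangle inequality
\[
|l_n(t)-\gamma(t)|\leq |l_n(t)-v_i|+|v_i-\gamma(i/N)|+|\gamma(i/N)-\gamma(t)|\leq |v_{i+1}-v_i|+\|T_n-Id\|_\infty+C/N.
\]
The bound $|v_{i+1}-v_i|\leq (C/K)\epsilon_n+2\|T_n-Id\|_\infty$ from Proposition \ref{ctwoapprox} (together with $N\sim K/\epsilon_n$) makes the whole right-hand side $o(1)$ uniformly in $i$. For uniform convergence of $\gamma_n$, I use the same estimate combined with \eqref{segmentdistance}, which gives $|\gamma_n(t)-l_n(t)|\leq ((m_2/m_1)^{1/p}+1)|v_{i+1}-v_i|\to 0$ uniformly.

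For the Lipschitz bound in \eqref{recoverybounds}, $l_n'(t)=N(v_{i+1}-v_i)$ on each open subinterval, so
\[
\|l_n'\|_\infty \leq N\max_i |v_{i+1}-v_i|\leq \bigl(K/\epsilon_n+1\bigr)\bigl((C/K)\epsilon_n+2\|T_n-Id\|_\infty\bigr),
\]
which stays bounded on $A_1$ because $\|T_n-Id\|_\infty/\epsilon_n\to 0$.

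The most delicate part is the a.e. pointwise limit $l_n'(t)\to\gamma'(t)$. Since $\gamma$ is Lipschitz, it is absolutely continuous and $\gamma'\in L^\infty([0,1])$ exists a.e. Fix a Lebesgue point $t$ of $\gamma'$, let $i_n$ be the (interior) index with $t\in[s_n^-,s_n^+):=[i_n/N,(i_n+1)/N)$, and write
\[
l_n'(t)=\frac{\gamma(s_n^+)-\gamma(s_n^-)}{s_n^+-s_n^-}+N\bigl[(v_{i_n+1}-\gamma(s_n^+))-(v_{i_n}-\gamma(s_n^-))\bigr].
\]
The first term equals $\tfrac{1}{s_n^+-s_n^-}\int_{s_n^-}^{s_n^+}\gamma'(s)\,ds$, and since $s_n^\pm\to t$ with $s_n^+-s_n^-=1/N\to 0$, the Lebesgue differentiation theorem yields convergence to $\gamma'(t)$ at a.e.\ $t$. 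The error term is at most $2N\|T_n-Id\|_\infty=O(\|T_n-Id\|_\infty/\epsilon_n)=o(1)$ on $A_1$. The main subtlety I anticipate is that the enclosing intervals $[s_n^-,s_n^+]$ are not centered at $t$, so ordinary differentiability of $\gamma$ at $t$ is not enough — one genuinely needs the Lebesgue point property of $\gamma'$, which is what underlies this step.
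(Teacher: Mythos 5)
Your proof is correct and follows essentially the same route as the paper: both compare $l_n$ (and its derivative) to the piecewise-linear interpolation of $\gamma$ through the grid points $\gamma(i/N)$, control the discrepancy by $\|T_n-Id\|_\infty$ and $N\|T_n-Id\|_\infty\to 0$ on $A_1$, and transfer to $\gamma_n$ via \eqref{segmentdistance}. Your closing remark is a valid way to justify the a.e.\ convergence of the difference quotients, though it is slightly over-cautious: since $t$ lies \emph{inside} the enclosing interval $[s_n^-,s_n^+]$, ordinary differentiability of $\gamma$ at $t$ (which holds a.e.\ for a Lipschitz path) already suffices, so the Lebesgue-point property is not genuinely needed.
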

\begin{proof}
  We first argue that
  $\lim_{n \to \infty} \sup_{0 \leq t \leq 1} |l_n(t) - \gamma(t)| = 0$. Let
  $u_i = \gamma(i/N)$, and let $\tilde{l}_n =l_{\u^{(n)}}\in \Omega(a,b)$ be the
  piecewise linear interpolation of $\u^{(n)}=(u_0,\ldots,u_N)$. As $\gamma$ is Lipschitz and $\lim_{n \to \infty} N(n) = \infty$, we have
  $\lim_{n \to \infty} \sup_{0 \leq t \leq 1} |\tilde{l}_n(t) - \gamma(t)| = 0$, and also $\lim_{n \to \infty} \tilde{l}_n'(t) = \gamma'(t)$ for a.e. $t \in [0,1]$. By
  construction, $l_n(i/N) = v_i= T_n\gamma(i/N)$ and $\tilde{l}_n(i/n) = u_i = \gamma(i/N)$ so that
  \begin{equation*}
    \max_{0\leq i\leq N} |l_n(i/N) - \tilde{l}_n(i/N)| \leq \|Id - T_n\|_{\infty}.
  \end{equation*}
  Then, as $l_n$ and $\tilde{l}_n$ are piecewise linear,
  it follows that
  $\sup_{0 \leq t \leq 1} |l_n(t) - \tilde{l}_n(t)| \leq \|Id - T_n
  \|_{\infty}$ and, as $\|Id - T_n \|_{\infty}$ vanishes on $A_1$, that
  $\lim_{n \to \infty} \sup_{0 \leq t \leq 1} |l_n(t) - \gamma(t)| = 0$.

  For $i/N < t < (i+1)/N$, we have \[l_n'(t) = N(v_{i+1} - v_i). \]
  As $|v_{i+1} - v_i| \leq \epsilon_n$ (Proposition \ref{ctwoapprox}), it follows that
  $|l_n'(t)| \leq N \epsilon_n \leq K+\epsilon_n$.  Hence,
  $\sup_n \|l_n'\|_{\infty} < \infty$.

 Likewise,
  $\tilde{l}_n'(t) = N(u_{i+1} - u_i)$, and so
  $$
    |l_n'(t) - \tilde{l}_n'(t)| \leq N(|v_{i+1} - u_{i+1}| + |v_i - u_i|) \leq 2N \|T_n - Id\|_{\infty}.
  $$
  For realizations in the probability $1$ set $A_1$, since $N = \ceil{K/\epsilon_n}$ and $\epsilon_n$ satisfies \eqref{rateassumption} and therefore vanishes slower than $\|T_n - Id\|_\infty$, we have $\lim_{n \to \infty} N\|T_n - Id\|_{\infty} = 0$.  Hence, $l_n'(t) \to \gamma'(t)$ for a.e. $t \in [0,1]$.

  Now, considering  the bound \eqref{segmentdistance}, it follows that
  \begin{equation*} \label{segmentsdistance}
    \sup_{0 \leq t \leq 1} |\gamma_n(t) - l_n(t) | \leq \max\left(C|v_0-v_1|,\ldots,C|v_{N-1} - v_N|\right) \leq C\epsilon_n,
  \end{equation*}
and hence $\|\gamma_n - \gamma\|_\infty \rightarrow 0$.
\end{proof}

With the above work in place, we proceed to the main result of this subsection.

\begin{lemma}[Limsup Inequality] \label{limsup} Let
  $\gamma \in \Omega(a,b)$ satisfy inequality
  \eqref{growthbound}. Then, with respect to realizations $\{X_i\}$ in the probability $1$ set
  $A_1$, we may find a sequence of paths $\{\gamma_n\}$ taken either in form for all large $n$ as (1) $\gamma_n \in \Omega_n^l(a,b)$ or (2) $\gamma_n\in \Omega_n^\gamma(a,b)$ such that
  $\lim_{n \to \infty} \sup_{0 \leq t \leq 1} |\gamma_n(t) -
  \gamma(t)| = 0$ and
  \begin{equation}
  \label{limsupineq}
    F(\gamma) \geq \limsup_{n \to \infty} F(\gamma_n).
   \end{equation}

\end{lemma}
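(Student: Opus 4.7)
The plan is to exploit the approximation scheme already built in Propositions \ref{ctwoapprox} and \ref{uniformrecovery}. Given $\gamma \in \Omega(a,b)$ satisfying \eqref{growthbound}, on the probability $1$ event $A_1$, for all large $n$ the points $\v = (v_0,\ldots,v_N)$ (with $v_0=a$, $v_N=b$, $v_i = T_n \gamma(i/N)$ and $N = \lceil K/\epsilon_n \rceil$) form a valid element of $V_n(a,b)$, yielding candidate approximants $l_n = l_{\v} \in \Omega_n^l(a,b)$ and $\gamma_n = \gamma_{\v} \in \Omega_n^{\gamma}(a,b)$. Proposition \ref{uniformrecovery} already gives $\sup_t |l_n(t) - \gamma(t)| \to 0$ and $\sup_t|\gamma_n(t) - \gamma(t)| \to 0$, so only the cost inequality \eqref{limsupineq} remains.

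For the linear case $\gamma_n = l_n$, I would invoke the dominated convergence theorem. From \eqref{recoverybounds}, $\dot l_n(t) \to \dot\gamma(t)$ at almost every $t$, and $\sup_n \|\dot l_n\|_\infty < \infty$. Combined with $l_n(t) \to \gamma(t)$ uniformly and the continuity of $f$ on $D \times \mathbb{R}^d$ (assumption (A0)), we get $f(l_n(t), \dot l_n(t)) \to f(\gamma(t),\dot\gamma(t))$ for a.e.\ $t$. The upper bound in (A3) provides the uniform integrable majorant $m_2 \sup_n \|\dot l_n\|_\infty^p < \infty$, so $F(l_n) \to F(\gamma)$, which is even stronger than the required limsup inequality.

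For the quasinormal case $\gamma_n = \gamma_{\v}$, I would piggyback on the linear case by a segment-by-segment comparison. Since $D$ is convex, each linear segment $l_{v_{i-1}, v_i}$ lies entirely in $D$ and is therefore an admissible competitor against the quasinormal segment $\gamma_{v_{i-1},v_i}$. The optimality identity \eqref{segment_optimal} then yields
\begin{equation*}
\int_{(i-1)/N}^{i/N} f(\gamma_n(t), \dot\gamma_n(t))\,dt \;\leq\; \int_{(i-1)/N}^{i/N} f(l_n(t), \dot l_n(t))\,dt
\end{equation*}
for each $i$. Summing over $i$ gives $F(\gamma_n) \leq F(l_n)$, and then the linear-case convergence $F(l_n) \to F(\gamma)$ yields $\limsup_n F(\gamma_n) \leq F(\gamma)$, as desired.

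The main technical point, and the only place real care is needed, is ensuring that the a.e.\ convergence $\dot l_n(t) \to \dot\gamma(t)$ interacts correctly with the uniform $L^\infty$ bound on $\dot l_n$ so that dominated convergence actually applies: this is exactly what Proposition \ref{uniformrecovery} has been tailored to provide, through the rate assumption \eqref{rateassumption} which forces $N \|T_n - \mathrm{Id}\|_\infty \to 0$ on $A_1$. Convexity of $D$ plays a second essential role in the quasinormal case, since it is needed to treat $l_n$ as a legitimate competitor against $\gamma_n$ on each subinterval; without this the segment-wise optimality comparison would fail.
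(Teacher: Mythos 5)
Your proposal is correct and follows essentially the same route as the paper: construct $\v$ via Proposition \ref{ctwoapprox}, use Proposition \ref{uniformrecovery} together with bounded/dominated convergence to get $F(l_n)\to F(\gamma)$ in the linear case, and then reduce the quasinormal case to the linear one by the segment-wise optimality comparison \eqref{segment_optimal} (relying on convexity of $D$). No gaps.
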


We remark that the sequence $\{\gamma_n\}$ in the last lemma is called the `recovery sequence' since the liminf inequality in Lemma \ref{liminf} and the limsup inequality in Lemma \ref{limsup} together imply the limit, $\lim_n F(\gamma_n)=F(\gamma)$.

\begin{proof}
   Let
  $N = \ceil{K/\epsilon_n}$, where $K=C+1$ say is a constant greater than
  $C$ in \eqref{growthbound}.
	Define $v_0 = a, v_N = b$, and
  $v_i = T_n \gamma(i/N)$ for $0 < i < N$. Then, by Proposition
  \ref{ctwoapprox}, $\v=\v^{(n)}=(v_0,\ldots,v_N) \in V_n(a,b)$.

  We now consider paths in case (1). By Proposition \ref{uniformrecovery}, the
  interpolated paths $l_n = l_{\v} \in \Omega_n^l(a,b)$
  converge uniformly to $\gamma$. Consider the bound
  \begin{equation*}
    |F(l_n) - F(\gamma)| \leq \int_0^1 |f(l_n(t),l_n'(t)) - f(\gamma(t),\gamma'(t))|\, dt.
  \end{equation*}
    By Proposition \ref{uniformrecovery}, $l_n'$ converges
  almost everywhere to $\gamma'$, and
  $\sup_n \|l_n'\|_{\infty} < \infty$.  Hence $(l_n(t),l_n'(t)) \to (\gamma(t),\gamma'(t))$ for almost every
  $t$.  Also, $\|\dot\gamma\|_\infty< C$ by \eqref{growthbound}. Since, by \eqref{fbounds}, $f(x,v) \leq m_2 |v|^p$, an
  application of the bounded convergence theorem yields
  $\lim_{n \to \infty} |F(l_n) - F(\gamma)| = 0$.
Here, $\{l_n\}$ is the desired `recovery' sequence.

  We now consider case (2). Let
  $\gamma_n = \gamma_{\v}\in \Omega^\gamma(a,b)$. By Proposition
  \ref{uniformrecovery}, it follows that
  $\lim_{n \to \infty} \sup_{0 \leq t \leq 1} |\gamma_n(t) -
  \gamma(t)| = 0$. To show \eqref{limsupineq}
  for this sequence, write 
	\begin{align*}
   F(\gamma_n) &= \int_0^1 f(\gamma_n(t),\dot{\gamma_n}(t))\, dt \\
             &= \sum_{i=1}^N \int_{(i-1)/N}^{i/N} f(\gamma_n(t),\dot{\gamma_n}(t))\, dt \\
              &\leq \sum_{i=1}^N \int_{(i-1)/N}^{i/N} f(l_n(t),\dot{l_n}(t))\, dt \ = \ F(l_n),
\end{align*}
as $\gamma_n$ on the time interval $[i/N, (i+1)/N]$ corresponds to the minimum cost, geodesic path moving from $v_i$ to $v_{i+1}$ (cf. \eqref{segment_optimal}), and $l_n$ is a possibly more expensive path.

But, by case (1), $\limsup F(\gamma_n) \leq \limsup F(l_n) \leq F(\gamma)$.
\end{proof}

  \subsection{Compactness}

  In this Subsection, we consider circumstances under which a
  sequence of paths $\{\gamma_n\}$, in the context of Theorems \ref{theorem1} and \ref{theorem2}, has a limit
  point with respect to uniform convergence. Here, the arguments when $p = 1$ differ from those when $p > 1$.

  In particular, consider paths $\gamma_n$ where
  $\int_0^1 f(\gamma_n(t),\dot{\gamma_n}(t))\, dt$ is uniformly
  bounded. One has
  $m_1 |v|^p \leq f(x,v)$ and it follows that
  $\{\gamma_n \}$ is bounded in the $W^{1,p}$ Sobolev
  space. When $p > 1$, this is sufficient to derive a suitable
  compactness result. But, when $p=1$, this is no longer the case.

	However, when $p=1$, our general
  outlook is that it is enough to establish a compactness result for
  {\it sequences of optimal paths}, on which certain eccentric possibilities are ruled out.
	
	  We begin by considering such compactness when $p = 1$, when the paths lie in $\Omega_n^\gamma(a,b)$.  The setting $p>1$ is discussed afterwards.
  
  \begin{prop}  \label{p=1compactnesslemma}
		Suppose $d\geq 2$ and that $p=1$.  Then, with respect to realizations $\{X_i\}$ in the probability $1$ set $A_1\cap A_2$,
		for all large $n$, if $\gamma \in \argmin G_n$ and $0\leq s,t\leq 1$, we have that
    \begin{equation*} \label{lipschitz}
      |\gamma(s) - \gamma(t)| \leq (4m_2/m_1^2) G_n(\gamma) |s-t|.
    \end{equation*}
    
  \end{prop}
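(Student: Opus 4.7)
\textit{Plan.} Write $\gamma = \gamma_{\v}$ for some $\v = (v_0, \ldots, v_m) \in V_n(a,b)$ with $\gamma \in \argmin G_n$. Since $\gamma$ is the concatenation of quasinormal segments each traversed in time $1/m$, the Lipschitz bound on $\gamma$ will follow from a uniform almost-everywhere bound on $|\dot\gamma(t)|$. On the $i$th segment, $\gamma(t) = \gamma_{v_{i-1},v_i}(mt - (i-1))$, and so \eqref{uniformlipschitz} of Proposition~\ref{lgammacompare} (specialized to $p=1$) yields $|\dot\gamma(t)| \leq (m_2/m_1)\, m\, |v_i - v_{i-1}| \leq (m_2/m_1)\, m\, \epsilon_n$, where the last step uses the edge constraint $|v_i - v_{i-1}| < \epsilon_n$. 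The claim thus reduces to establishing $m\epsilon_n \leq (4/m_1)\, G_n(\gamma)$ for all large $n$.

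The key step is a \emph{no-skip} property: on $A_2$, for each $1 \leq i \leq m-1$ we must have $|v_{i+1} - v_{i-1}| \geq \epsilon_n$. Suppose for contradiction that $0 < |v_{i+1} - v_{i-1}| < \epsilon_n$. Then the shortened sequence $\v' = (v_0, \ldots, v_{i-1}, v_{i+1}, \ldots, v_m)$ again lies in $V_n(a,b)$, and the triangle property of $d_f$ valid when $p=1$ (cf.\ \eqref{p=1triangle}) gives $d_f(v_{i-1}, v_{i+1}) \leq d_f(v_{i-1}, v_i) + d_f(v_i, v_{i+1})$, so $G_n(\gamma_{\v'}) \leq G_n(\gamma)$. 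Equality would force the concatenated path through $v_i$ to be an $f$-geodesic joining $v_{i-1}$ and $v_{i+1}$; invoking local uniqueness of $f$-geodesics between close points, this geodesic must coincide with the fixed $\gamma_{v_{i-1}, v_{i+1}}$, placing $v_i \in S_{\gamma_{v_{i-1}, v_{i+1}}}$ and contradicting $A_2$. Hence $G_n(\gamma_{\v'}) < G_n(\gamma)$, contradicting the optimality of $\v$.

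Using the no-skip inequality, the lower bound $d_f \geq m_1 |\cdot|$ gives $d_f(v_{i-1}, v_i) + d_f(v_i, v_{i+1}) \geq m_1 |v_{i+1} - v_{i-1}| \geq m_1 \epsilon_n$ for each interior $i$. Summing over a disjoint collection of consecutive edge pairs yields $G_n(\gamma) \geq m_1 \epsilon_n \lfloor m/2 \rfloor$, so $m\epsilon_n \leq 2 G_n(\gamma)/m_1 + 2\epsilon_n$. Since $G_n(\gamma) \geq d_f(a,b) \geq m_1 |b-a|$ is bounded below uniformly in $n$ while $\epsilon_n \to 0$, for sufficiently large $n$ we have $2\epsilon_n \leq 2G_n(\gamma)/m_1$, giving $m\epsilon_n \leq 4 G_n(\gamma)/m_1$. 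Combining with the speed bound from the first paragraph and integrating yields the stated Lipschitz estimate.

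The main obstacle is the strict-triangle step in the no-skip argument: $A_2$ only excludes $v_i$ from the specific quasinormal geodesic $\gamma_{v_{i-1}, v_{i+1}}$, so to rule out equality one must appeal to a local uniqueness of $f$-geodesics between nearby points (which should follow from the convexity/ellipticity of $f$ and its $C^1$ regularity away from the origin) or establish a suitably strengthened avoidance property. A secondary subtlety is that the endpoints $a, b$ are not elements of $\mathcal{X}_n$, so triples $(v_{i-1}, v_i, v_{i+1})$ involving them require a minor separate treatment, but the structure of the argument is unchanged.
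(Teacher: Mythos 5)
Your proof is correct and follows the same overall strategy as the paper's: reduce the Lipschitz bound to the estimate $m\epsilon_n \leq (4/m_1)\,G_n(\gamma)$, and obtain that estimate by showing that an optimal path cannot cluster its vertices, using the $p=1$ triangle inequality for $d_f$, the event $A_2$, and optimality to produce a strictly cheaper shortcut. The counting step is implemented differently. The paper covers the vertices by balls of radius $\epsilon_n/2$ centered at the $v_i$, shows each ball contains at most two vertices of $\v$ (shortcutting from the first to the last vertex in the ball), and then lower-bounds the Euclidean arclength by $\epsilon_n/2$ per ball over a subfamily of at least $m/2$ balls, giving $m_1 m\epsilon_n/4 \leq G_n(\gamma)$ with no additive error. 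Your ``no-skip'' version works edge-pair by edge-pair and is a bit more elementary, at the cost of the extra $+2\epsilon_n$, which you correctly absorb using $G_n(\gamma) \geq m_1|b-a|>0$ (implicitly $a\neq b$, as the paper assumes throughout); you should also dispose of the degenerate case $v_{i+1}=v_{i-1}$, which your stated no-skip inequality does not cover but which optimality excludes directly, since deleting the loop strictly reduces the cost. The two subtleties you flag are not defects of your route relative to the paper's: the paper asserts the strict inequality $d_f(v_j,v_l) < d_f(v_j,v_k)+d_f(v_k,v_l)$ on $A_2$ with exactly the same implicit appeal to identifying an equality-case minimizer with the fixed path $\gamma_{v_j,v_l}$, and its $v_j,v_l$ may likewise be the endpoints $a,b$, which $A_2$ as literally defined does not cover (one enlarges $A_2$ to include triples containing $a$ or $b$, still a probability-$1$ event). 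So your proposal matches the paper's proof in substance, with a slightly different and equally valid combinatorial core.
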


  \begin{proof}
    The path $\gamma \in \Omega_n^{\gamma}(a,b)$ is a piecewise
    quasinormal path of the form $\gamma = \gamma_\v$ where $\v=(v_0,v_1,\ldots,v_m)\in V_n(a,b)$. 
		We now try to relate $m$, the number of segments in the path, to
    $G_n(\gamma)$, the path energy.  Recall the formula \eqref{G_n_equation}.

Let $B_i$ denote the (open) Euclidean ball of radius
    $\epsilon_n / 2$ around $v_i$. We claim that
    $| B_i \cap \{v_0,\ldots,v_m\}| \leq 2$. To see this, suppose that
    there are at least $3$ points of $\{v_0,\ldots,v_m\}$ in $B_i$. Let $v_j$ and $v_l$ denote the points in $B_i$ with the smallest and largest index, respectively. 
     By minimality of $\gamma$, $v_j \neq v_l$. Let $v_k$ denote
    a third point in $B_i$.

    As $v_k, v_l\in B_i$, we have $|v_k - v_l| < \epsilon_n$, and so these points are connected in the graph. Applying the
    triangle inequality for $d_f$, valid when $p=1$ (cf. \eqref{p=1triangle}), and noting on the event $A_2$
    that $v_k \not \in S_{\gamma_{v_j,v_l}}$, we have
    \begin{equation*}
      d_f(v_j,v_l) < d_f(v_j,v_k) + d_f(v_k,v_l)
      \leq d_f(v_j,v_{j+1}) \ldots + d_f(v_{l-1},v_l).
    \end{equation*}
     Thus, the path
    $\tilde{\gamma} = \gamma_\w$, where $\w=(v_0,\ldots,v_j,v_l,\ldots,v_m)$,
    satisfies $G_n(\tilde{\gamma}) < G_n(\gamma)$. This contradicts
    the optimality of $\gamma$, and therefore
    $|B_i \cap \{v_0,\ldots,v_m\}| \leq 2$.

    We may thus cover the vertices of $\gamma$ with balls
    $\{B_i\}_{i=1}^m$, centered on the vertices $\{v_i\}_{i=1}^m$, and
    each of these balls contains at most two vertices. It follows that
    there is a subcover by $s\geq m/2$ balls, $\{B'_1,\ldots,B'_s\}$, no two of them containing a common point in $\v$.

    A lower bound for $G_n(\gamma)$ is found by considering that
    part of the $G_n$-integral contributed to by the portion of the
    path $\gamma$ in $B'_i$. Each such portion, if it does not
    terminate in $B'_i$, must visit both the center of $B'_i$ and the
    boundary $\partial B'_i$, and hence has Euclidean length at least
    $\epsilon_n / 2$. Summing over these portions, we obtain
    \begin{equation*}
      \frac{m\epsilon_n}{4} \leq \sum_{i=1}^s \frac{\epsilon_n}{2} \leq L,
    \end{equation*}
    where $L = \int_0^1 |\dot{\gamma}(t)|\, dt$ is the Euclidean
    arclength of $\gamma$.

    By \eqref{fbounds}, it follows that
    \begin{equation} \label{mepsbound}
      m_1 \frac{m\epsilon_n}{4} \leq  m_1L \leq \int_0^t f(\gamma(t), \dot\gamma(t))dt = G_n(\gamma).
    \end{equation}

    To get a Lipschitz bound for $\gamma$, recall the bound
    \eqref{uniformlipschitz}.  Then, $\gamma_{v_{i-1},v_i}(m\cdot -i)$ is
    Lipschitz with constant $(m_2/m_1)m|v_{i-1} - v_i|$.  It follows
    that $\gamma$, being the concatenation of these segments, 
    satisfies
    \begin{eqnarray*}
      |\gamma(s) - \gamma(t)| &=& \big| \sum_{r=0}^{q-1} \gamma(w_r)-\gamma(w_{r+1})\big|\\
      &\leq&
      (m_2/m_1) m \max\left(|v_0-v_1|,\ldots,|v_{m-1} - v_m|\right)\sum_{r=0}^{q-1}|w_r - w_{r+1}|\\
      &\leq& (m_2/m_1) m \epsilon_n|s-t|, 
    \end{eqnarray*}
        where $s=w_0<\cdots w_q=t$, $\{w_r\}_{r=1}^{q-1}\subset \{j/m\}_{j=1}^{m-1}$ so that $|w_r - w_{r+1}|\leq 1/m$ for $0\leq r\leq q-1$.
    Then, with \eqref{mepsbound}, we obtain
    \begin{equation*}
      |\gamma(t) - \gamma(s)| \leq (m_2/m_1) m \epsilon_n|s-t|
      \leq (4/m_1)(m_2/m_1) G_n(\gamma)|s-t|,
    \end{equation*}
		finishing the proof.
		\end{proof}

  We now prove our compactness property.

\begin{lemma}[Compactness Property]\label{compactness}
\indent
  
  (I).  Suppose for all large $n$ that either $\gamma_n\in \argmin L_n$ or $\gamma_n \in \argmin G_n$.   Then, for realizations $\{X_i\}$ in the probability $1$ set $A_1$, we have $\sup_n F(\gamma_n)<\infty$.  
  
(II).  Consider now the following cases:
  
\noindent (i) Suppose paths  $\gamma_n\in \argmin G_n$ for all large $n$.
  
  \noindent (ii) Suppose $p > 1$, and paths $\gamma_n\in \Omega(a,b)$ for all large $n$ such that $\sup_n F(\gamma_n) < \infty$.

  Then, in case (i) when $p>1$, and in case (ii),
  with respect to realizations $\{X_i\}$ in the probability $1$ set $A_1$, we have $\{\gamma_n\}$
  is relatively compact for the topology of uniform convergence.  For case (i) when $d\geq 2$ and $p=1$, the same conclusion holds with respect to realizations $\{X_i\}$ in the probability $1$ set $A_1\cap A_2$.
\end{lemma}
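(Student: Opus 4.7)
The plan is to prove Part (I) by comparing the optimal cost $F(\gamma_n)$ against the cost of a recovery sequence associated to a fixed reference path, and Part (II) by combining appropriate equicontinuity estimates with the Arzel\`a--Ascoli theorem. Since the paths always lie in the bounded convex domain $D$, uniform boundedness is automatic, and only equicontinuity needs to be established.

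For Part (I), I would apply the Limsup Inequality (Lemma \ref{limsup}) to the constant-speed linear reference path $\gamma^\ast(t) = a + t(b - a)$, which by convexity of $D$ belongs to $\Omega(a,b)$ and satisfies the growth bound \eqref{growthbound} with $c = C = |b - a|$ (assuming $a \neq b$; the case $a = b$ is trivial since then the single-vertex discrete path gives $F(\gamma_n) = 0$). On realizations in $A_1$, Lemma \ref{limsup} produces, in each setting, a recovery sequence $\tilde\gamma_n \in \Omega_n^l(a,b)$ or $\tilde\gamma_n \in \Omega_n^\gamma(a,b)$ with $\limsup_n F(\tilde\gamma_n) \leq F(\gamma^\ast) \leq m_2 |b - a|^p$. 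Since $L_n$ and $G_n$ are restrictions of $F$ to their respective discrete path spaces, optimality of $\gamma_n$ yields $F(\gamma_n) \leq F(\tilde\gamma_n)$, so $\sup_n F(\gamma_n) < \infty$.

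For Part (II), case (ii) follows from standard $W^{1,p}$-type reasoning: the lower bound $m_1 |v|^p \leq f(x,v)$ from (A3) gives $\int_0^1 |\dot\gamma_n|^p\,dt \leq F(\gamma_n)/m_1$, and for $p > 1$ H\"older's inequality produces the uniform bound
\begin{equation*}
|\gamma_n(t) - \gamma_n(s)| \;\leq\; \Bigl(F(\gamma_n)/m_1\Bigr)^{1/p} |t - s|^{1 - 1/p},
\end{equation*}
yielding equicontinuity and, via Arzel\`a--Ascoli, relative compactness in $C([0,1];D)$. Case (i) with $p > 1$ reduces to case (ii) by invoking Part (I) together with the inclusion $\Omega_n^\gamma(a,b)\subset \Omega(a,b)$. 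For case (i) with $d \geq 2$ and $p = 1$, I would invoke Proposition \ref{p=1compactnesslemma} on realizations in $A_1 \cap A_2$ to obtain the uniform Lipschitz bound $|\gamma_n(t) - \gamma_n(s)| \leq (4 m_2 / m_1^2)\, G_n(\gamma_n)\, |t - s|$; combining with Part (I) ensures this constant is bounded in $n$, again giving equicontinuity and compactness via Arzel\`a--Ascoli.

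The conceptually delicate step is case (i) of Part (II) when $p = 1$, where the usual $W^{1,p}$ coercivity fails: an $L^1$-bound on $\dot\gamma_n$ does not prevent the derivative from concentrating on an arbitrarily short subinterval via reparametrization, so equicontinuity cannot be extracted from energy bounds alone. The resolution is furnished by Proposition \ref{p=1compactnesslemma}, which rules out such pathology for $G_n$-minimizers by exploiting the triangle inequality for $d_f$ available at $p = 1$ together with the event $A_2$ (no random vertex lies on a quasinormal segment between two others) to show that every ball of radius $\epsilon_n/2$ contains at most two vertices of an optimal discrete path; this in turn controls the number $m$ of quasinormal segments in terms of $G_n(\gamma_n)$ and hence yields the required Lipschitz constant.
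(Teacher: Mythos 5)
Your proposal is correct and follows essentially the same route as the paper: Part (I) via the limsup inequality applied to a fixed reference path satisfying \eqref{growthbound} (the paper leaves the reference path generic where you take the straight segment, which convexity of $D$ makes admissible) combined with optimality, and Part (II) via Arzel\`a--Ascoli, using the H\"older/coercivity bound for $p>1$ and Proposition \ref{p=1compactnesslemma} for the $p=1$, $d\geq 2$ case on $A_1\cap A_2$. No gaps.
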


\begin{proof}
  We first prove the bound $\sup_n F(\gamma_n)<\infty$ in part (I). Choose a
  $\tilde{\gamma} \in \Omega(a,b)$, where \eqref{growthbound} holds, and $F(\tilde{\gamma}) <
  \infty$. By Lemma \ref{limsup}, there is a sequence
  $\{\tilde{\gamma}_n\}$ of either piecewise linear or quasinormal paths such that
  $\limsup_{n \to \infty} F(\tilde{\gamma}_n) \leq
  F(\tilde{\gamma})$. Hence, by minimality of $\{\gamma_n\}$, with respect to paths in either $\Omega_n^l$ or $\Omega_n^\gamma$, we have
  \begin{equation} \label{fnbound}
    \sup_n F(\gamma_n) \leq \sup_n F(\tilde{\gamma}_n) < \infty.
  \end{equation}
  
  We now argue the claims for cases (i) and (ii).  In both cases, as $D$ is bounded, the paths $\gamma_n : [0,1] \to D$ are
  uniformly bounded. To invoke the Arzela-Ascoli theorem, we
  must show that $\{\gamma_n\}$ is an equicontinuous family.

  In case (i), when $d\geq 2$ and $p=1$,  by Lemma \ref{p=1compactnesslemma} on realizations in $A_1\cap A_2$, we have
  $|\gamma_n(s) - \gamma_n(t)| \leq C G_n(\gamma_n) |s-t|$, with $C$
  independent of $n$. As $G_n(\gamma_n)=F(\gamma_n)$, combining with \eqref{fnbound}, it follows that
  $\{\gamma_n\}$ is equicontinuous.
  
  If $p > 1$, with respect to realizations in $A_1$, \eqref{fnbound} implies that, if case (i) holds for the sequence,
  then case (ii) holds. 
  
  Without loss of generality, then, we focus our
  attention now on case (ii).
  Recall, by \eqref{fbounds}, that $m_1 |v|^p \leq f(x,v)$.  Let $q$ be the conjugate of $p$, that is $1/p + 1/q =1$.  Then, for $0 \leq s < t \leq 1$,
  \begin{equation} \label{equicontinuitybound}
  \begin{aligned} 
    |\gamma_n(s) - \gamma_n(t)| & \leq \int_s^t |\gamma_n'(r)|\, dr \\
                      & \leq |t-s|^{1/q} \left(\int_0^1 |\gamma_n'(t)|^p \, dt\right)^{1/p} \\
                      & \leq (|t-s|^{1/q} / m_1^{1/p}) \left(\int_0^1 f(\gamma_n(t),\gamma_n'(t))\, dt\right)^{1/p} \\
    &= (|t-s|^{1/q} / m_1^{1/p}) F(\gamma_n)^{1/p}
  \end{aligned}
  \end{equation}

  Combining \eqref{equicontinuitybound} and the assumption in case (ii) that $\sup_n F(\gamma_n)<\infty$,  we have
  $|\gamma_n(s) - \gamma_n(t)| \leq C |t-s|^{1/q}$ for a constant $C$
  independent of $n$, and hence $\{\gamma_n\}$ is equicontinuous.
 \end{proof}

\subsection{Proof of Theorems \ref{theorem1} and \ref{theorem2}}

With the preceding `Gamma convergence' ingredients in place, the proofs of Theorem \ref{theorem1} and \ref{theorem2} are similar, and will be given together.

\medskip
\noindent {\bfseries Proofs of Theorems \ref{theorem1} and \ref{theorem2}}.
  Fix a realization $\{X_i\}$ in the probability $1$ set $A_1$.
  Let $\{\gamma_n \}$ be a sequence of paths such that, for all large $n$, we have either $\gamma_n \in \argmin L_n$ or $\gamma_n\in \argmin G_n$. Supposing
  that $\{ \gamma_n \}$ has a subsequential limit
  $\lim_{k \to \infty} \gamma_{n_k} = \gamma$, with respect to the topology
  of uniform convergence, we now argue that $\gamma \in \argmin F$.

  By the `liminf' Lemma \ref{liminf}, we have
  \begin{equation} \label{lower2}
    F(\gamma) \leq \liminf_{k \to \infty} F(\gamma_{n_k}).
  \end{equation}
  Let $\gamma^* \in \argmin F$. Then, by inequality \eqref{derivbounds} of Proposition \ref{existence}, there exist constants $c_1,c_2$ such that $c_1|s-t| \leq |\gamma^*(s) - \gamma^*(t)| \leq c_2 |s-t|$ for $0 \leq s,t \leq 1$. Hence, by the `limsup' Lemma \ref{limsup}, there exists a sequence $\{\gamma^*_n\}$, of either piecewise linear or quasinormal paths, converging uniformly to $\gamma^*$ and $\limsup_{n \to \infty} F(\gamma_n^*) \leq F(\gamma^*)$. 
  Recall that $F(\gamma) = L_n(\gamma)$ or $F(\gamma)=G_n(\gamma)$ when $\gamma$ is piecewise linear or quasinormal respectively.  Combining with \eqref{lower2} and minimality of $\gamma_{n_k}$, we have
  \begin{equation} \label{squeeze2}
    F(\gamma) \leq \liminf_{k \to \infty} F(\gamma_{n_k}) \leq \limsup_{k \to \infty} F(\gamma^*_{n_k}) \leq F(\gamma^*)=\min F,
  \end{equation}
  and so $\gamma \in \argmin F$. 
  
  In the case the paths $\{\gamma_n\}$ are piecewise linear, since $F(\gamma_n) = \min L_n$,
   it follows from \eqref{squeeze2} that
  \begin{equation*} \label{minconverge}
    \lim_{k \to \infty} \min L_{n_k}= \min F.
  \end{equation*}
Similarly, when $\{\gamma_n\}$ are piecewise quasinormal, $\lim_{k\rightarrow\infty} \min G_{n_k} = \min F$.

  Therefore, we have shown that, if a subsequential limit of $\{ \gamma_n \}$ exists, it is an optimal continuum path $\gamma \in \argmin F$.  
  
  Consider now Theorem \ref{theorem1}, where $p>1$ and $\gamma_n\in \argmin L_n$, and part (1) of Theorem \ref{theorem2} where $p>1$ and $\gamma_n \in \argmin G_n$.  By the `compactness' Lemma \ref{compactness}, $\sup_n F(\gamma_n)<\infty$ and a subsequential limit exists.    
  
Consider now part (2) of Theorem \ref{theorem2} where $d\geq 2$, $p=1$ and $\gamma_n\in \argmin G_n$. Suppose that the realization $\{X_i\}$ belongs also to the probability $1$ set $A_2$.  Then, subsequential limits follow again from the `compactness' Lemma \ref{compactness}. 

  Now, consider any subsequence
  $\{n_k\}$ of $\mathbb{N}$. Then, by the work above, applied to the sequence $\{n_k\}$, there is a further subsequence $\{n_{k_j}\}$, and
  a $\gamma \in \argmin F$, with $\gamma_{n_{k_j}} \to \gamma$ uniformly, in the settings of Theorems \ref{theorem1} and \ref{theorem2}.
	Moreover,
  $\lim_{j \to \infty} \min L_{n_{k_j}} = \min F$ when the paths $\{\gamma_{n_{k_j}}\}$ are piecewise linear, and $\lim_{j\to \infty} \min G_{n_{k_j}} = \min F$ when the paths are piecewise quasinormal. 
	
	Since this argument is valid for any subsequence $\{n_k \}$ of $\mathbb{N}$, we recover that $\min L_n \to \min F$ or $\min G_n \to \min F$ when respectively the paths are piecewise linear or quasinormal.
Finally, if $F$ has a unique minimizer $\gamma$, by considering subsequences again, the whole sequence $\{\gamma_n\}$ must converges uniformly to $\gamma$.
\qed

\subsection{Proof of Corollary \ref{hausdorffcor}}

Corollary \ref{hausdorffcor} is a statement about Hausdorff
convergence. In order to adapt the results of Theorems \ref{theorem1} and \ref{theorem2} to this end, we make the following observation.

\begin{prop} \label{unifhausdorff} Fix a realization
  $\{X_i\}$ in $A_1$, and consider a sequence of paths $\{\gamma_n\}$
  such that $\gamma_n$ for all large $n$ is either in the form (1) $\gamma_n = \gamma_{\v^{(n)}}$ or (2) $\gamma_n = l_{\v^{(n)}}$, where
  $\v^{(n)} \in V_n(a,b)$.  Suppose that $\gamma_n$
  converges uniformly to a limit $\gamma \in \Omega(a,b)$. Then,
  \begin{equation*}
    \lim_{n \to \infty} d_{haus}(\v^{(n)}, S_{\gamma}) = 0.
  \end{equation*}
\end{prop}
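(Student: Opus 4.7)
\medskip
\noindent \textbf{Proof proposal for Proposition \ref{unifhausdorff}.} The plan is to split the Hausdorff distance into its two one-sided suprema, handle the ``vertices-to-curve'' direction by the hypothesis of uniform convergence, and handle the ``curve-to-vertices'' direction by a short argument showing that each interpolating segment of $\gamma_n$ stays within $O(\epsilon_n)$ of its endpoint vertices, uniformly in $n$ and in the segment index.

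First, observe that in both cases the vertex set satisfies $\v^{(n)} \subset S_{\gamma_n}$, since each $v_i^{(n)}$ is attained by $\gamma_n$ (at time $i/m_n$, where $m_n$ is the number of steps in $\v^{(n)}$). Setting $\eta_n = \sup_{0 \leq t \leq 1} |\gamma_n(t) - \gamma(t)| \to 0$, it follows at once that
\begin{equation*}
\sup_{x \in \v^{(n)}} \inf_{y \in S_\gamma} |x-y| \leq \eta_n \longrightarrow 0,
\end{equation*}
which handles the first half of the Hausdorff distance.

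For the reverse direction, I would establish a bound of the form
\begin{equation*}
\delta_n := \sup_{z \in S_{\gamma_n}} \min_{0 \leq i \leq m_n} |z - v_i^{(n)}| \leq C \epsilon_n,
\end{equation*}
with a constant $C$ depending only on $f$. In case (2), for $t \in [i/m_n,(i+1)/m_n]$ the point $z = l_{\v^{(n)}}(t)$ is a convex combination of $v_i^{(n)}$ and $v_{i+1}^{(n)}$, so its distance to the nearer endpoint is at most $|v_{i+1}^{(n)} - v_i^{(n)}|/2 < \epsilon_n/2$. In case (1), $z = \gamma_{v_i^{(n)},v_{i+1}^{(n)}}(m_n t - i)$ and the Lipschitz bound \eqref{uniformlipschitz} from Proposition \ref{lgammacompare} yields $|z - v_i^{(n)}| \leq (m_2/m_1)^{1/p}|v_{i+1}^{(n)}-v_i^{(n)}| \leq (m_2/m_1)^{1/p}\epsilon_n$, and similarly for $v_{i+1}^{(n)}$. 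Thus $\delta_n \to 0$ in either case.

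To finish, given $y = \gamma(t) \in S_\gamma$, let $z = \gamma_n(t) \in S_{\gamma_n}$; by the previous step there exists a vertex $v \in \v^{(n)}$ with $|v - z| \leq \delta_n$, so the triangle inequality gives
\begin{equation*}
\min_{x \in \v^{(n)}} |x - y| \leq |v - y| \leq |v - z| + |z - y| \leq \delta_n + \eta_n,
\end{equation*}
uniformly in $y$. Combining with the first half yields $d_{haus}(\v^{(n)}, S_\gamma) \leq \delta_n + \eta_n \to 0$. I do not foresee a serious obstacle: the only delicate point is verifying $\delta_n = O(\epsilon_n)$ in the quasinormal case, which is handled cleanly by \eqref{uniformlipschitz}, and the argument does not invoke optimality of the paths, only the interpolation structure and the edge length bound inherent to $V_n(a,b)$.
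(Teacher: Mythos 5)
Your proposal is correct and follows essentially the same route as the paper: both halves of the Hausdorff distance are handled by combining the uniform convergence $\|\gamma_n-\gamma\|_\infty\to 0$ with the observation that every point of the interpolating path lies within $O(\epsilon_n)$ of a vertex of $\v^{(n)}$. The only cosmetic difference is in the quasinormal case, where the paper bounds $|\gamma(t)-\gamma(i/m(n))|$ via the Lipschitz constant of the limit $\gamma$ together with $m(n)\geq |a-b|/\epsilon_n\to\infty$, whereas you bound $|\gamma_n(t)-v_i^{(n)}|$ directly through \eqref{uniformlipschitz}; both are valid.
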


\begin{proof} Write $\v^{(n)} = (v_0^{(n)}, \ldots, v_{m(n)}^{(n)})$.
  Since $\gamma_{n} \to \gamma$ uniformly and
  $v_i^{(n)} = \gamma_n(\frac{i}{m(n)})$, it follows that
\begin{equation} \label{hausdorff1}
  \lim_{n \to \infty} \max_{1 \leq i \leq m(n)} \inf_{x \in S_{\gamma}} |v_i^{(n)} - x| = 0.
\end{equation}

On the other hand, consider $t$ with $i/m(n) \leq t <
(i+1)/m(n)$. 
In case (1), 
$$|\gamma(t) - v_i^{(n)}| \leq |\gamma(t) - \gamma(i/m(n))| + \|\gamma_n - \gamma\|_\infty \leq C/m(n) + \|\gamma_n - \gamma\|_\infty,$$
where $C$ is the Lipschitz constant of $\gamma$.
In case (2), using linearity of the path, $|\gamma(t) - v_i^{(n)}|\leq |v^{(n)}_{i+1} - v^{(n)}_i| + \|\gamma_n - \gamma\|_\infty \leq \epsilon_n + \|\gamma_n - \gamma\|_\infty$. 

Since $\v^{(n)}$ is a path in $V_n(a,b)$, one may bound $\epsilon_n m(n) \geq \sum_{i=0}^{m(n)} |v_i^{(n)} - v_{i+1}^{(n)}| \geq |a-b|$, and so $m(n) \geq |a-b|/\epsilon_n$ diverges.  Hence, in both cases,
\begin{equation} \label{hausdorff2}
  \lim_{n \to \infty} \sup_{x \in S_{\gamma}} \min_{1 \leq i \leq m(n)} |x - v_i^{(n)}| = 0.
\end{equation}
Combining \eqref{hausdorff1} and \eqref{hausdorff2}, it follows that
$\lim_{n \to \infty} d_{haus}(\v^{(n)},S_{\gamma}) = 0$.
\end{proof}

We now proceed to prove Corollary \ref{hausdorffcor}.

\medskip \noindent {\bfseries Proof of Corollary \ref{hausdorffcor}}.   We give the argument for the case of piecewise linear optimizers, as the the argument is exactly the same for piecewise quasinormal paths, using Theorem \ref{theorem2} instead of Theorem \ref{theorem1} below.
 
  Suppose $l_n=l_{\v^{(n)}}\in \argmin L_n$  is a sequence of paths where $\v^{(n)}\in V_n(a,b)$.
	  By Theorem
  \ref{theorem1}, with respect to a probability $1$ set of realizations $\{X_i\}$, any subsequence of $\{l_n\}$ has a further subsequence $\{l_{n_k}\}$ which converges uniformly to a $\gamma\in \argmin F$. 
  By Proposition
  \ref{unifhausdorff}, it follows that
  $\lim_{k \to \infty} d_{haus}(\v^{(n_k)},S_{\gamma}) = 0$.

Suppose now that $F$ has a unique (up to reparametrization) minimizer $\gamma$.  Note that $S_\gamma$ is invariant under reparametrization of $\gamma$.  Then, we conclude that all limit points of $\{v^{(n)}\}$ correspond to $S_\gamma$, and hence the whole sequence $v^{(n)}$ converges to $S_\gamma$,
  $\lim_{n \to \infty} d_{haus}(v^{(n)},S_{\gamma}) = 0$.
\qed

\section{Proof of Theorems \ref{p>1approxthm}, \ref{p=1approxthm} and \ref{linear_p=1approxthm}}
\label{riemann_proofs}

The proofs of Theorems \ref{p>1approxthm}, \ref{p=1approxthm} and \ref{linear_p=1approxthm} all make use of Theorems \ref{theorem1} and \ref{theorem2} in comparing the costs $H_n$ and $L_n$ to $G_n$.  When $p=1$, as with respect to Theorem \ref{theorem2}, the arguments in Theorems \ref{p=1approxthm} and \ref{linear_p=1approxthm} are more involved, especially with respect to the minimal cost $H_n$-path convergence, where several geometric estimates are used to show a compactness principle.

We begin with the following useful fact.
\begin{prop} \label{minprop}
  Suppose $U, W, C : X \to \mathbb{R}$ are 
  functions such that $|U(x) - W(x)| \leq C(x)$ for all
  $x \in X$. If $U(x_1) = \min U$ and $W(x_2) = \min W$ then
  $$-C(x_1)\leq \min U - \min W \leq C(x_2).$$
\end{prop}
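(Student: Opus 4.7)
The plan is to prove the two inequalities separately by evaluating $U$ and $W$ at the minimizers of the \emph{other} function, and then use the pointwise closeness bound $|U-W|\le C$ to compare values.

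For the upper bound $\min U - \min W \le C(x_2)$, I would observe that since $x_1$ minimizes $U$, we have $\min U = U(x_1) \le U(x_2)$. The hypothesis $|U(x_2) - W(x_2)| \le C(x_2)$ gives $U(x_2) \le W(x_2) + C(x_2) = \min W + C(x_2)$. Chaining these yields the desired inequality.

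For the lower bound $\min U - \min W \ge -C(x_1)$, I would argue symmetrically: since $x_2$ minimizes $W$, $\min W = W(x_2) \le W(x_1) \le U(x_1) + C(x_1) = \min U + C(x_1)$, which rearranges to the claim.

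There is no substantive obstacle here; the proposition is a purely formal comparison lemma of the kind routinely invoked when one wants to control the difference of minima by a pointwise bound on the functionals. The only thing to be mindful of is that the bound $C(x_2)$ on the upper side involves evaluating $C$ at the minimizer of $W$ (not $U$), and likewise $C(x_1)$ on the lower side involves the minimizer of $U$ — i.e., each side uses $C$ evaluated at the ``wrong'' minimizer, a point worth stating explicitly so the reader can apply the result in the intended direction.
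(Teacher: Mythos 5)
Your proof is correct and follows essentially the same route as the paper: for the upper bound you evaluate $U$ at the minimizer $x_2$ of $W$ and apply the pointwise bound there, and the lower bound is the symmetric argument with the roles swapped. Nothing further is needed.
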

\begin{proof}
  For any $y \in X$ we have
  $\min U = U(x_1) \leq U(y) \leq W(y) + C(y)$. Taking $y = x_2$ gives
  $\min U \leq \min W + C(x_2)$.  The other inequality follows
  similarly.
\end{proof}

\subsection{Proof of Theorem \ref{p>1approxthm}}
  Suppose $\v = (v_0,v_1,\ldots,v_m) \in V_n(a,b)$. Then (Lip) implies, for $0\leq i\leq m-1$, that
  \begin{equation} \label{linpathineq}
     \Big| \int_0^1 f(v_i + t(v_{i+1}-v_i), v_{i+1}-v_i) \, dt - f(v_i, v_{i+1}-v_i)\Big| 
      \leq c |v_{i+1} - v_i|^{p+1}.  
  \end{equation}
  Since $v_i,v_{i+1}$ are neighbors in the $\epsilon_n$-graph, $|v_{i+1}-v_i| \leq \epsilon_n$. Thus, from the homogeneity \eqref{fhomo} and bounds \eqref{fbounds} of $f$, rescaling
  \eqref{linpathineq} gives
  \begin{eqnarray*}
	&&\big |\int_{\frac{i}{m}}^{\frac{i+1}{m}} f(v_i + m(s-\frac{i}{m}) (v_{i+1} - v_i),m(v_{i+1} -
v_i)) \, ds - \frac{1}{m}f(v_i,m(v_{i+1} - v_i))\big|\\ 
&&\ \ \ \ \ \leq
\frac{c\epsilon_n}{m} |v_{i+1} - v_i|^p m^p.
\end{eqnarray*} 
Recall formulas \eqref{L_n_equation} and \eqref{H_n_eq}.  Summing over $i$ gives the
following estimate relating $L_n$ and $H_n$:
  \begin{equation} \label{fgcompare0}
      |L_n(l_\v) - H_n(\v)| \leq \frac{c \epsilon_n}{m} \sum_{i=0}^{m-1} |v_{i+1} - v_i|^p m^p.
      \end{equation}
Applying \eqref{fbounds}, the right-side of \eqref{fgcompare0} can be bounded above in terms of both $L_n(l_\v)$ and $H_n(\v)$.  Hence, with $c'=cm_1^{-1}$,
\begin{equation}\label{fgcompare}
   |L_n(l_\v) - H_n(\v)|    \leq c' \epsilon_n \min(L_n(l_\v), H_n(\v)).
  \end{equation}

  Suppose $l_{\v^{(n)}}\in \argmin L_n$ and $\w^{(n)}\in \argmin H_n$.  An immediate consequence of \eqref{fgcompare} and Proposition \ref{minprop} is 
  \begin{eqnarray*}
    -c'\epsilon_n \min L_n &\leq& -c'\epsilon_n \min (L_n(l_{\v^{(n)}}), H_n(\v^{(n)}))\\
		&\leq &\min L_n - \min H_n\\
    & \leq& c' \epsilon_n \min(L_n(l_{\w^{(n)}}), H_n(\w^{(n)})) \leq c'\epsilon_n \min  H_n.
  \end{eqnarray*}

  By Theorem \ref{theorem1}, we have $\lim_{n \to \infty} \min L_n  = \min F$ for almost all realizations $\{X_i\}$ (those in $A_1$ as the proof shows).  Then, $\min H_n \leq (1 + c'\epsilon_n)\min L_n$ and so $\limsup \min H_n\leq \min F$ a.s.  In particular, as $\min F <\infty$, we have $\sup_n \min H_n <\infty$ a.s.  
  
  On the other hand, $\min H_n \geq \min L_n -c'\epsilon_n \min H_n$ a.s.  As $\sup_n \min H_n<\infty$, we observe that
  $\liminf \min H_n \geq \min F$ a.s.  Hence, $\min H_n \to \min F$ a.s.  
  This finishes one part of Theorem \ref{p>1approxthm}.
  
To address the others, consider $l_{\w^{(n)}}$, the piecewise linear interpolation of $\w^{(n)}$. 
By \eqref{fgcompare}, we have
  \[ L_n({l}_{\w^{(n)}}) \leq (1+c' \epsilon_n) H_n(\w^{(n)}) = (1+c'\epsilon_n)\min H_n. \]

    Moreover, noting the optimality of $l_{\v^{(n)}}$ and $\w^{(n)}$ gives
  \begin{equation}
      L_n(l_{\v^{(n)}}) \leq L_n(l_{\w^{(n)}}) 
      \leq (c' \epsilon_n +1) H_n(\v^{(n)}).
		\label{thm3eq}
  \end{equation}
  Another application of \eqref{fgcompare} yields 
  $$H_n(\v^{(n)}) \leq c'\epsilon_n L_n(l_{\v^{(n)}}) + L_n(l_{\v^{(n)}}).$$  
  Hence, the left-side of \eqref{thm3eq} is bounded as
  \begin{eqnarray}
   \min L_n =  L_n(l_{\v^{(n)}}) &\leq& L_n(l_{\w^{(n)}}) \leq (c'\epsilon_n+1)^2 L_n(\v^{(n)})\nonumber\\
    &=& (c'\epsilon_n +1)^2 \min L_n.
		\label{samelimit0}
      \end{eqnarray}

  Hence, as $\min L_n \to \min F$ a.s., we have 
  \begin{equation} \label{samelimit} 
  \min F =
    \lim_{n \to \infty} L_n(l_{\v^{(n)}}) =\lim_{n\to\infty} L_n(l_{\w^{(n)}}) \ \ \text{a.s.} \end{equation} 
 We also observe, as a consequence, that $\sup_n L_n(l_{\w^{(n)}}) < \infty$ a.s. 
 
  Given that $p>1$, by the `compactness' Lemma
  \ref{compactness}, with respect to realizations $\{X_i\}$ in the probability $1$ set $A_1$, any subsequence of $\{l_{\w^{(n)}}\}$ has a further uniformly convergent subsequence to a limit 
  $\tilde{\gamma} \in \Omega(a,b)$. By the `liminf' Lemma
  \ref{liminf},
  $F(\tilde{\gamma}) \leq \liminf_{n \to \infty} L_n(l_{\w^{(n)}})$ a.s. Finally, by \eqref{samelimit}, it follows that $F(\tilde \gamma) = \min F$ and so $\tilde \gamma\in \argmin F$.
	Consequently, if $F$ has a unique minimizer $\gamma$, then the whole sequence $\{l_{\w^{(n)}}\}$ converges uniformly almost surely to it.

The proofs of statements about Hausdorff convergence follow the same arguments as given for Corollary \ref{hausdorffcor}, and are omitted. 
\qed

\subsection{Proof of Theorems \ref{p=1approxthm} and \ref{linear_p=1approxthm}}
We prove Theorems \ref{p=1approxthm} and \ref{linear_p=1approxthm} in two parts.

\medskip
\noindent {\bfseries Proof of Theorems \ref{p=1approxthm} and \ref{linear_p=1approxthm}.}
First, we prove in Proposition \ref{p=1_minapproxthm} that the minimal costs of $H_n$ and $L_n$ converge to $\min F$, making use of comparisions with quasinormal paths, for which we have control in Theorem \ref{theorem2}.  

 Second, in Proposition \ref{p=1_convergethm} in Subsection \ref{convergence_subsect}, we show that the minimizing paths converge in the various senses desired.  A main tool in this proof is a compactness property (Proposition \ref{gcompactness}), for minimal $H_n$ and $L_n$-paths when $p=1$, shown in Subsection \ref{p=1compactness_section}.  \qed

\vskip .1cm
To supply the proofs of the desired propositions, we now obtain an useful estimate between the cost of a quasinormal path and a linear one.

\begin{prop}
\label{quasilinearcompare}
Suppose 
$d\geq 2$, $p=1$, and that $f$ also satisfies (Lip) and (Hilb).  For $a,b\in D$ such that $|b-a|\leq 1$, there is a constant $c_1$ such that
$$\big|d_f(a,b) - f(a,b-a)\big| \leq c_1|b-a|^2.$$
In particular, as $d_f(a,b) = \int_0^1 f(\gamma(t), \dot\gamma(t))dt$ for the quasinormal path $\gamma = \gamma_{a,b}$ connecting $a$ and $b$, 
we have
$$\Big| \int_0^1 f(\gamma(t),\dot\gamma(t))dt - f(a,b-a)\Big| \leq c_1|b-a|^2.$$
\end{prop}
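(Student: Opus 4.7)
\medskip
\noindent \textbf{Proof proposal for Proposition \ref{quasilinearcompare}.}

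The plan is to establish the two-sided bound by comparing the optimal cost $d_f(a,b)$ against the linear-path cost in one direction, and against the straight-line integrand with the basepoint frozen at $a$ in the other direction. The key inputs are the Lipschitz condition (Lip), which lets us replace $f(\gamma(t),\cdot)$ by $f(a,\cdot)$ with quadratic error, the Hilbert condition (Hilb), which says straight lines are minimizers for the frozen kernel $f(a,\cdot)$, and the Lipschitz bound \eqref{uniformlipschitz} on the quasinormal minimizer, which ensures $|\gamma(t) - a|$ and $|\dot\gamma(t)|$ are controlled linearly by $|b - a|$.

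For the upper bound, I would use the linear path $\ell(t) = a + t(b-a)$ as a test path. Using (Lip) and $p = 1$,
\[
  \bigl|f(\ell(t), b - a) - f(a, b-a)\bigr| \leq c\,t\,|b-a|\cdot |b-a|,
\]
and integrating in $t$ yields
\[
  d_f(a,b) \leq \int_0^1 f(\ell(t), b-a)\,dt \leq f(a,b-a) + \tfrac{c}{2}|b-a|^2.
\]

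For the lower bound, let $\gamma = \gamma_{a,b}$ be the quasinormal optimizer, so $d_f(a,b) = \int_0^1 f(\gamma(t),\dot\gamma(t))\,dt$. Applying (Lip) pointwise in $t$ gives
\[
  f(\gamma(t),\dot\gamma(t)) \geq f(a,\dot\gamma(t)) - c\,|\gamma(t) - a|\,|\dot\gamma(t)|.
\]
By (Hilb), the frozen functional $\tilde\gamma \mapsto \int_0^1 f(a, \dot{\tilde\gamma}(t))\,dt$ is minimized among paths from $a$ to $b$ by the straight line, so
\[
  \int_0^1 f(a,\dot\gamma(t))\,dt \geq f(a, b-a).
\]
By Proposition \ref{lgammacompare} (with $p = 1$), both $|\gamma(t) - a| \leq (m_2/m_1)|b-a|$ and $|\dot\gamma(t)| \leq (m_2/m_1)|b-a|$, giving
\[
  \int_0^1 |\gamma(t) - a|\,|\dot\gamma(t)|\,dt \leq (m_2/m_1)^2|b-a|^2.
\]
Combining these estimates yields $d_f(a,b) \geq f(a,b-a) - c(m_2/m_1)^2|b-a|^2$, completing the proof with $c_1 = \max\{c/2, c(m_2/m_1)^2\}$.

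The main conceptual point, and the only place where serious structure of $f$ is used, is the lower bound: we need (Hilb) to deal with the frozen integrand after invoking (Lip), and we need the uniform Lipschitz estimate on quasinormal paths to guarantee that the error term inherits the desired $|b-a|^2$ order. Everything else is routine manipulation. The restriction $|b-a| \leq 1$ does not enter essentially; it simply lets us absorb the constant into $c_1$ without worrying about a competing linear-in-$|b-a|$ term.
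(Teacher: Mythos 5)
Your proof is correct and follows essentially the same route as the paper: freeze the basepoint at $a$ via (Lip), invoke (Hilb) for the frozen kernel $f(a,\cdot)$, and control the error by the arclength/Lipschitz bounds on the quasinormal minimizer from Proposition \ref{lgammacompare}. The only (cosmetic) difference is that the paper packages both directions at once by applying Proposition \ref{minprop} to the pair of functionals, whereas you prove the upper bound by testing with the linear path and the lower bound pointwise.
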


\begin{proof}    
 By \eqref{fbounds}, for a Lipschitz path $\beta$ from $a$ to $b$, we have
 $$m_1|\int_0^1|\dot\beta(t)|dt \leq \int_0^1 f(\beta(t), \dot\beta(t))dt \leq m_2\int_0^1 |\dot\beta(t)|dt.$$
 Optimizing over $\beta$, we recover that $m_1|b-a| \leq \int_0^1 f(\gamma(t), \dot\gamma(t))dt \leq m_2|b-a|$.  By \eqref{fbounds} again, we have that the arclength of $\gamma$ satisfies $ \int_0^1 |\dot \gamma(t)|dt \leq (m_2/m_1)|b-a|$.    
In particular, the path $\gamma$ is constrained in the Euclidean ball $B$ around $a$ of radius $(m_2/m_1)|b-a|$.  Note also that the minimizing Euclidean path $\widetilde\gamma$, with constant speed $|b-a|$ on the straight line from $a$ to $b$ in times $0\leq t\leq 1$, is also constrained in this ball.

Now, for a Lipschitz path $\beta$, constrained in the ball $B$, expand 
$$\int_0^1 f(\beta(t),\dot\beta(t))dt = \int_0^1 f(a, \dot\beta(t))dt + \int_0^1 \big(f(\beta(t), \dot\beta(t))-f(a, \dot\beta(t))dt.$$
As the paths are in $B$, by (Lip), with respect to a Lipschitz constant $C$,
\begin{align*}
  |f(\beta(t),\dot\beta(t)) - f(a,\dot\beta(t))| &\leq C |\beta(t)-a||\dot\beta(t)| \\
  & \leq C(m_2/m_1)|b-a||\dot\beta(t)|.
\end{align*}
Therefore, with respect to Lipschitz paths $\beta$ constrained in $B$,
$$\Big|\int_0^1 f(\beta(t),\dot\beta(t))dt - \int_0^1 f(a,\dot\beta(t))dt\Big| \leq C(m_2/m_1)|b-a|\int_0^1|\dot\beta(t)|dt.$$
Note, by condition (Hilb) that, for the cost with respect to $f(a,\cdot)$, straight lines are geodesics, and in particular $\widetilde{\gamma}(t)= (1-t)a + tb$ is optimal.  Hence, the minimal $F$-cost, with respect to $f(a,\cdot)$, of moving from $a$ to $b$, given invariance to parametrization when $p=1$, is $f(a,b-a)$.

Then, by Proposition \ref{minprop}, applied to the two functionals of $\beta$ on the left-hand sides,
we obtain 
\begin{eqnarray*}
\big|d_f(a,b) - f(a,b-a)\big| &\leq& 
C(m_2/m_1)|b-a|\max \Big[\int_0^1 |\dot\gamma(t)|dt, \int_0^1 |\dot{\widetilde{\gamma}}(t)|dt\Big],\\
&\leq& C(m_2/m_1)^2|b-a|^2,
\end{eqnarray*}
noting the arclength bounds of $\gamma=\gamma_{a,b}$ and $\widetilde \gamma$ above.
\end{proof}

\begin{prop}
\label{p=1_minapproxthm}
  Suppose 
	$d\geq 2$, $p=1$, and that $f$ also satisfies (Lip) and (Hilb).
	With respect to realizations $\{ X_i\}$ in the
  probability $1$ set $A_1\cap A_2$, the minimum values of
  $H_n$ and $L_n$ converge to the minimum of $F$,
  \[ \lim_{n \to \infty} \min_{v \in V_n(a,b)} H_n(v) = \lim_{n\to\infty}\min_{\gamma\in \Omega^l_n(a,b)}L_n(\gamma)= \min_{\gamma \in \Omega(a,b)} F(\gamma). \]
\end{prop}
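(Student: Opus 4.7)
\textbf{Proof proposal for Proposition \ref{p=1_minapproxthm}.} My strategy is to leverage the convergence $\min G_n \to \min F$ already established in Theorem \ref{theorem2} (which applies precisely in the regime $d\ge 2$, $p=1$) and to show that the three minima $\min H_n$, $\min L_n$, $\min G_n$ agree up to a multiplicative error of order $\epsilon_n$. The key input beyond Theorem \ref{theorem2} is Proposition \ref{quasilinearcompare}, which gives the quadratic-in-$|b-a|$ control on $d_f(a,b)-f(a,b-a)$.

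First I would establish edge-wise comparison estimates for any $\v=(v_0,\ldots,v_m)\in V_n(a,b)$. Using $1$-homogeneity, we have $H_n(\v)=\sum_{i}f(v_i,v_{i+1}-v_i)$ and $G_n(\gamma_\v)=\sum_{i} d_f(v_i,v_{i+1})$. Since $|v_{i+1}-v_i|\le\epsilon_n$, Proposition \ref{quasilinearcompare} applies to each consecutive pair, so
\[
|G_n(\gamma_\v)-H_n(\v)|\ \le\ c_1\sum_{i}|v_{i+1}-v_i|^2\ \le\ c_1\epsilon_n\sum_{i}|v_{i+1}-v_i|.
\]
Both \eqref{fbounds} (applied to $H_n$) and Proposition \ref{lgammacompare} (applied to $G_n$) give $\sum_i|v_{i+1}-v_i|\le m_1^{-1}\min(H_n(\v),G_n(\gamma_\v))$, producing
\begin{equation}\label{HGcompareplan}
|G_n(\gamma_\v)-H_n(\v)|\ \le\ (c_1/m_1)\,\epsilon_n\,\min(H_n(\v),G_n(\gamma_\v)).
\end{equation}
A completely analogous argument based on (Lip), carried out exactly as in \eqref{fgcompare0}--\eqref{fgcompare} from the proof of Theorem \ref{p>1approxthm}, yields
\begin{equation}\label{LHcompareplan}
|L_n(l_\v)-H_n(\v)|\ \le\ c_2\,\epsilon_n\,\min(L_n(l_\v),H_n(\v)).
\end{equation}

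The second step is to transfer \eqref{HGcompareplan}--\eqref{LHcompareplan} to the minima via Proposition \ref{minprop}. Plugging in the respective minimizers gives
\[
|\min H_n-\min G_n|\ \le\ (c_1/m_1)\,\epsilon_n\min(\min H_n,\min G_n),
\]
and similarly for $|\min L_n - \min H_n|$. Since $\min G_n\to\min F<\infty$ almost surely on $A_1\cap A_2$ by Theorem \ref{theorem2}, the first inequality forces $\min H_n$ to be eventually bounded, hence $\min H_n\to\min F$; feeding this into the analogous estimate for $L_n$ yields $\min L_n\to\min F$ as well.

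The only substantive input is Proposition \ref{quasilinearcompare}; once it is in hand the argument is essentially a two-line application of Proposition \ref{minprop}. Notably, no new compactness-of-minimizers statement is invoked at this stage (that is deferred to Proposition \ref{p=1_convergethm}), because we are only comparing \emph{values} of minima, not the minimizing paths themselves. Likewise, the restrictive decay condition \eqref{additionaleps} plays no role here and enters only in the path-level convergence proof.
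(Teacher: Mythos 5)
Your proposal is correct and follows essentially the same route as the paper: apply Proposition \ref{quasilinearcompare} edge-wise to get $|G_n-H_n|\le c\epsilon_n\min(G_n,H_n)$, use (Lip) to get the analogous $|L_n-H_n|$ bound, transfer to minima via Proposition \ref{minprop}, and bootstrap from $\min G_n\to\min F$ (Theorem \ref{theorem2} on $A_1\cap A_2$). Your closing observations—that no compactness of minimizers and no condition \eqref{additionaleps} are needed at this stage—also match what the paper does.
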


\begin{proof}
  Consider the energies $G_n$ and $H_n$ in \eqref{G_n_equation} and \eqref{H_n_eq}.
	For $\gamma = \gamma_{\v}$, the piecewise quasinormal path through the vertices $\v=(v_0,v_1,\ldots,v_m)\in V_n(a,b)$, we have, noting $p=1$, that
  $$
	G_n(\gamma) \ = \ 
    \sum_{i=0}^{m-1} \int_{0}^1 f(\gamma_i(t),\dot\gamma_i(t))\, dt, 
	$$
  where $\gamma_i =\gamma_{v_i, v_{i+1}}$ is a quasinormal path from $v_i$ to $v_{i+1}$.

  An application of Proposition \ref{quasilinearcompare}, noting that $|v_{i+1}-v_i|\leq \epsilon_n$, gives
  $$\Big| \int_0^1 f(\gamma_i(t),\dot{\gamma_i}(t))\, dt - f(v_i,v_{i+1}-v_i)\Big|
  \leq c_1 \epsilon_n |v_{i+1}-v_i|.$$ Summing this over $i$ gives
  \begin{equation} \label{fgquasibound}
    \begin{aligned}
      |G_n(\gamma) - H_n(\v)| &\leq c_1 \epsilon_n \sum_{i=0}^{m-1} |v_{i+1} - v_i|  \\
      & \leq c_1m_1^{-1} \epsilon_n \min(G_n(\gamma),H_n(\v)),
    \end{aligned}
  \end{equation}
  where the last inequality follows from applying \eqref{fbounds} to
  both $G_n$ and $H_n$.
	
	Recall the energy $L_n$ in \eqref{L_n_equation}. Similarly, and more directly, using (Lip), we have for a linear path $l = l_\v\in \Omega^l_n(a,b)$ through vertices $\v= (v_0,\ldots, v_m)\in V_n(a,b)$ that
	$$\Big| \int_0^1 f(l_i(t),\dot{l_i}(t))\, dt - f(v_i,v_{i+1}-v_i)\Big|
  \leq c_1 \epsilon_n |v_{i+1}-v_i|,$$ 
where $l_i = l_{v_i,v_{i+1}}$ is the linear path from $v_i$ to $v_{i+1}$ with slope $v_{i+1}-v_i$.
Summing over $i$, using \eqref{fbounds}, we obtain
\begin{equation}
\label{l_eq}
 |L_n(l) - H_n(\v)| \leq c_1 \epsilon_n \sum_{i=0}^{m-1} |v_{i+1} - v_i|  \leq c_1m_1^{-1} \epsilon_n \min(L_n(l),H_n(\v)).
\end{equation}

  We now reprise some of the argument for Theorem \ref{p>1approxthm}.  A consequence of \eqref{fgquasibound} and Proposition \ref{minprop} is
  \begin{equation*}
    -c_1m_1^{-1}\epsilon_n \min G_n \leq \min G_n - \min H_n \leq c_1m_1^{-1} \epsilon_n \min H_n.
  \end{equation*}
   Hence, $\sup_n \min H_n < \sup_n (1+c_1m_1^{-1}\epsilon_n)\min G_n$. 
  By Theorem \ref{theorem2}, as seen in its proof, for realizations in the probablility $1$ set $A_1\cap A_2$, we have
  $\lim_{n \to \infty} \min G_n = \min F$, which is finite.  Then, we conclude that also
  $\lim_{n \to \infty} \min H_n = \min F$ a.s.
	
	Now, we can repeat this same argument with $L_n$ and \eqref{l_eq} in place of $G_n$ and \eqref{fgquasibound}, using now $\min H_n\rightarrow \min F$ a.s., to conclude that also $\min L_n$ converges to $\min F$ a.s.
\end{proof}

\subsubsection{Compactness Property}
\label{p=1compactness_section}

When $d\geq 2$ and $p=1$, analogous to Lemma \ref{p=1compactnesslemma}, we formulate now a compactness property for minimal paths $w^{(n)} \in \argmin H_n$ and $l_{\v^{(n)}}\in \argmin L_n$.

It will be useful to consider a partition of $D$ by a regular
grid. Let $z \in \mathbb{Z}^d$ and let $\Box_{n,z}$ be the
intersection of the box $\prod_{i=1}^d [z_i \tau_n, (z_{i}+1)\tau_n)$
with $D$, where $\tau_n = \epsilon_n / \sqrt{d}$. We will refer to these sets as `boxes', with the understanding that the boundary
of $D$ results in some of these being irregularly shaped. Regardless,
each $\Box_{n,z}$ has diameter at most $\epsilon_n$, and so points of
$\{X_i\}_{i=1}^n$ in $\Box_{n,z}$ are all connected in the random geometric graph.

\begin{prop} \label{gcompactness}  Consider the assumptions in the second parts of Theorems \ref{p=1approxthm} and \ref{linear_p=1approxthm}. 
	 Suppose $\w^{(n)}\in \argmin H_n$, and consider the piecewise linear interpolations $l_n=l_{\w^{(n)}}$.
	Then, with respect to a realizations $\{X_i\}_{i\geq 1}$ in a probability $1$ subset of $A_1\cap A_2$, the sequence $\{l_n\}$ is relatively compact for the topology of uniform convergence.
	
Suppose now $l_{\v^{(n)}}\in\argmin L_n$.  Then, the same conclusion holds for the optimal linear interpolations $\{l_{\v^{(n)}}\}$.
\end{prop}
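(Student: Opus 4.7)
The plan is to exhibit a uniform Lipschitz estimate on the linear interpolations and invoke the Arzela-Ascoli theorem. Writing $\w^{(n)}=(w_0,\ldots,w_{m_n})$, the constant-speed piecewise linear interpolation $l_n=l_{\w^{(n)}}$ has Lipschitz constant at most $m_n\max_i |w_{i+1}-w_i|\leq m_n\epsilon_n$, so the task reduces to proving a uniform upper bound on $m_n\epsilon_n$. By Proposition \ref{p=1_minapproxthm}, $\sup_n H_n(\w^{(n)})=\sup_n \min H_n<\infty$, and the lower bound in \eqref{fbounds} with $p=1$ then yields a uniform bound on the Euclidean arclength $L_n:=\sum_i |w_{i+1}-w_i|$.

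The crux of the argument is a local no-clumping property: for some fixed $\rho>0$ and all sufficiently large $n$, no Euclidean ball of radius $\rho\epsilon_n$ contains more than a universal constant $K$ of the vertices $w_0,\ldots,w_{m_n}$. I would prove this by contradiction. Suppose three consecutive vertices $w_{k-1},w_k,w_{k+1}$ lie in such a ball. Since $\{X_i\}$ comes from a continuous distribution and $d\geq 2$, almost surely no three are collinear, and $w_k$ lies off the chord from $w_{k-1}$ to $w_{k+1}$ by some distance $r>0$. Since $|w_{k-1}-w_{k+1}|<\epsilon_n$, this chord is available as an edge in $\mathcal{G}_n(a,b)$, so one may replace the two edges through $w_k$ by this single edge; using (Lip) to re-base $f(w_k,\cdot)$ at $w_{k-1}$, (TrIneq) to consolidate, and (Pythag) to quantify the off-line loss, the resulting $H_n$-cost decreases by at least $Cr^\alpha - C'\epsilon_n|w_{k+1}-w_k|$. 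The upper bound on $\delta$ in \eqref{additionaleps} is precisely what ensures, via the density estimates of Lemma \ref{kbound} applied to the boxes $\{\Box_{n,z}\}$, that whenever a ball contains more than $K$ vertices one can extract a triple satisfying the (Pythag) hypothesis $|w_{k-1}-w_{k+1}|\leq c r^{1/\alpha}$ with $r$ large enough that $Cr^\alpha$ genuinely dominates the (Lip) error, contradicting the optimality of $\w^{(n)}$.

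Once the no-clumping bound is in hand, cover $\{w_i\}$ by balls of radius $\rho\epsilon_n$; at most $K^{-1}m_n$ suffice, and each (except possibly one containing $b$) must contribute at least $\rho\epsilon_n$ to $L_n$. Therefore $m_n\epsilon_n \leq (K/\rho)L_n$ is uniformly bounded, $l_n$ has uniformly bounded Lipschitz norm, and Arzela-Ascoli on the bounded set $D$ delivers the claimed relative compactness. The case $l_{\v^{(n)}}\in \argmin L_n$ is parallel: inequality \eqref{l_eq} compares $L_n$ and $H_n$ up to a multiplicative factor $1+O(\epsilon_n)$, so the arclength bound transfers, and the bypass argument is applied directly to $L_n$, using that the $F$-cost of a single linear segment differs from the corresponding $H_n$-term by $O(\epsilon_n|v_{i+1}-v_i|)$, again absorbed by the Pythag gain.

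The main obstacle lies in the quantitative balancing inside the bypass step: one must simultaneously secure that the substitute edge $w_{k-1}w_{k+1}$ lies in $\mathcal{G}_n(a,b)$ and that the Pythag gain $Cr^\alpha$ strictly exceeds the (Lip) error $C'\epsilon_n|w_{k+1}-w_k|$ for every clumped configuration occurring in the random point cloud. Making both conditions hold uniformly in $n$, while extracting the requisite off-line triple from a sufficiently populated ball, is exactly the purpose of the two-sided constraint on $\delta$ recorded in \eqref{additionaleps}, mediated by random-geometric-graph counting bounds such as Lemma \ref{kbound}.
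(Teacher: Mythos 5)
Your overall architecture matches the paper's: reduce relative compactness to a uniform bound on the Lipschitz constant $m_n\epsilon_n$ of the interpolations, get this from a uniform arclength bound plus a local bound on how many path vertices can accumulate in a region of diameter $O(\epsilon_n)$, and finish with Arzela--Ascoli on the bounded domain $D$. The gap is in how you establish the local accumulation (``no-clumping'') bound. Your bypass argument on a consecutive triple $w_{k-1},w_k,w_{k+1}$ does not work as stated: almost-sure non-collinearity only gives that the distance $r$ from $w_k$ to the chord is positive, with no quantitative lower bound, and when $r$ is small the (Pythag) gain $Cr^\alpha$ is swamped by the (Lip) re-basing error of order $\epsilon_n^2$, so optimality is not contradicted. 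Nothing prevents an optimal path from having many consecutive vertices each lying extremely close to the local chord. Your proposed repair --- invoking ``the density estimates of Lemma \ref{kbound}'' to extract a triple with $r$ large --- is circular, since Lemma \ref{kbound} \emph{is} the statement that a box contains at most $K$ path vertices, i.e.\ exactly the no-clumping property you are trying to prove; and even granting many vertices in a small ball, there need not exist any consecutive triple with large off-chord distance. (Also, \eqref{additionaleps} is a lower bound on $\delta$, not an upper bound; the upper bound $\delta<1/d$ comes from \eqref{rateassumption}.)

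The paper closes this gap with a two-step argument in Lemma \ref{kbound} that your proposal is missing. First, taking $w_i,w_j$ to be the first and last visits of the path to a box, it shows using (TrIneq), (Pythag), (Lip) and the a priori bound $|j-i|\le\epsilon_n^{-\beta}$ from Lemma \ref{initialwbounds} that the \emph{entire} sub-path $(w_i,\ldots,w_j)$ must lie in the $r=\epsilon_n^{\alpha}$-neighborhood of the single chord from $w_i$ to $w_j$: exiting this tube costs at least $Cr^\alpha=C\epsilon_n^{\alpha^2}$ more than the one-hop replacement (a legal edge since $|w_i-w_j|<\epsilon_n$), while the accumulated localization error over at most $\epsilon_n^{-\beta}$ edges is only $C'\epsilon_n^{2-\beta}$, and $\alpha^2<2-\beta$ by the choice $\beta=(2-\alpha^2)\eta$ with $1<\alpha<\sqrt2$. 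Second --- and this is the probabilistic ingredient your argument lacks as a non-circular input --- a Borel--Cantelli count shows that such a thin cylinder (length $\epsilon_n$, radius $\epsilon_n^{\alpha}$) contains only a bounded number of sample points almost surely for all large $n$; this is where the hypothesis $\delta>[\alpha(d-1)+1]^{-1}$ is actually used. Without this confinement-plus-counting step, the uniform bound on $m_n\epsilon_n$, and hence the equicontinuity, does not follow.
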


\begin{proof}
  We show that the sequence $\{ l_n \}$ is
  equicontinuous for almost all realizations in $A_1$. As the paths belong to a bounded set $D$, the proposition would then follow from the Arzela-Ascoli criterion.
  
  Partition $D$ by boxes
  $\{\Box_{n,z}\}_{z \in \mathbb{Z}^d}$. 
	By Lemma \ref{numvisited} below, the number of boxes
  visited by $\w^{(n)}$ and $\v^{(n)}$ is a.s. bounded by $C / \epsilon_n$ a.s., for all large $n$, where $C=C(f,d)$. By Lemma
  \ref{kbound} below, the number of vertices in $\w^{(n)}$ and $\v^{(n)}$ in a box is a.s. bounded by a constant
  $K=K(d, \rho, \alpha)$ for all large $n$. Thus, the maximum number $k_n$ of points in $\w^{(n)}$ and $\v^{(n)}$ is a.s. bounded,
  \[ k_n \leq K C /\epsilon_n. \] Since
  $|w^{(n)}_{i+1}-w^{(n)}_i|, |v^{(n)}_{i+1} - v^{(n)}_i| \leq \epsilon_n$, we obtain
  $\sup_{i} k_n |w^{(n)}_{i+1} - w^{(n)}_i| \leq KC$ and $\sup_i k_n |v^{(n)}_{i+1} - v^{(n)}_i| \leq KC$ a.s. for all large $n$.

  This implies a.s. that the piecewise linear
  paths $l_n$ and $l_{\v^{(n)}}$
  are Lipschitz, with
  respect to the fixed constant $KC$, for all large $n$, and so in
  particular equicontinuous. Indeed, for $l_n = l_{\w^{(n)}}$, where say $\w^{(n)}=(w^{(n)}_0,\ldots,w^{(n)}_{k_n})$, consider the part of the path
  connecting $w^{(n)}_i$ and $w^{(n)}_{i+1}$ from times $i/k_n$ to $(i+1)/k_n$,
  namely $l_n(t) = w^{(n)}_i(i+1 - k_n t) + w^{(n)}_{i+1}(k_n t - i)$. Then,
  we have $|\dot l_n(t)| = k_n |w^{(n)}_{i+1} - w^{(n)}_i|\leq k_n \epsilon_n \leq KC$.  The same argument holds for the paths $l_{\v^{(n)}}$.
\end{proof}

We now show the lemmas used in the proof Proposition \ref{gcompactness}.  We first bound the number of boxes visited by an optimal path.

\begin{lemma} \label{numvisited} 
Suppose $d\geq 2$, $p=1$, and that $f$ also satisfies 
(Lip) and (Hilb). Suppose $\w\in \argmin H_n$ and $l_{\v}\in \argmin L_n$ are optimal paths. Then, for realizations $\{X_i\}$ in the probability $1$ set $A_1\cap A_2$, for all large $n$, the number of distinct boxes
  $\{\Box_{n,z}\}_{z \in \mathbb{Z}^d}$ visited by $\w$ and $\v$ is bounded by
  $C / \epsilon_n$, where $C=C(d,f)$.
\end{lemma}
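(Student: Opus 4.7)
The plan is to combine an a.s.\ upper bound on the optimal costs $\min H_n$ and $\min L_n$ with the ellipticity estimate \eqref{fbounds} to control the total Euclidean length of an optimal path, and then translate this length bound into a box count by an elementary covering argument.

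First, by Proposition \ref{p=1_minapproxthm}, on the probability $1$ event $A_1\cap A_2$ both $\min H_n$ and $\min L_n$ converge to $\min F<\infty$, so both are a.s.\ bounded uniformly in $n$. Since $p=1$, the $1$-homogeneity of $f$ together with \eqref{fbounds} gives, for $\w=(w_0,\ldots,w_m)\in V_n(a,b)$,
\begin{equation*}
 H_n(\w) \ = \ \sum_{i=0}^{m-1} f(w_i, w_{i+1}-w_i) \ \geq \ m_1 \sum_{i=0}^{m-1} |w_{i+1}-w_i|,
\end{equation*}
and similarly $L_n(l_\v) \geq m_1 \sum_i |v_{i+1}-v_i|$. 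Hence, for any $\w\in \argmin H_n$ and $l_\v\in\argmin L_n$, the total Euclidean step-length $\sum_i|w_{i+1}-w_i|$ (respectively $\sum_i|v_{i+1}-v_i|$) is a.s.\ bounded above by a constant $C_1=C_1(f,d,D)$, uniformly in $n$.

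Second, every vertex of $\w$ lies on the image of its piecewise linear interpolation $l_\w$, so any box $\Box_{n,z}$ containing a vertex of $\w$ must meet the image of $l_\w$. The arclength of $l_\w$ equals $\sum_i |w_{i+1}-w_i|\leq C_1$. Parameterizing $l_\w$ by arclength and subdividing its domain into $\lceil C_1/\tau_n\rceil$ consecutive subintervals of length at most $\tau_n=\epsilon_n/\sqrt{d}$, each subinterval maps to a connected subset of $\R^d$ of Euclidean diameter at most $\tau_n$. Such a set meets at most a dimensional constant $c_d$ of the side-$\tau_n$ grid cubes, and hence at most $c_d$ of the boxes $\Box_{n,z}$ (which are restrictions of those cubes to $D$). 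Therefore the number $N$ of distinct boxes visited by $\w$ satisfies
\begin{equation*}
 N \ \leq \ c_d \lceil C_1/\tau_n\rceil \ \leq \ C/\epsilon_n
\end{equation*}
for $C=C(f,d)$ and all $n$ large. The argument is identical for $l_\v$.

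The only substantive step is the length bound: it is where the $p=1$ structure and the a.s.\ upper bound on $\min H_n, \min L_n$ supplied by Proposition \ref{p=1_minapproxthm} are both essential. I expect no real obstacle beyond this, since the covering step is standard, modulo the innocuous fact that a set of diameter $\tau_n$ in $\R^d$ meets at most a bounded (dimensional) number of axis-aligned grid cubes of side $\tau_n$.
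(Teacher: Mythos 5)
Your proposal is correct and follows essentially the same route as the paper: both arguments bound the Euclidean length of the optimal discrete path by $\min H_n/m_1$ (resp.\ $\min L_n/m_1$) using the ellipticity bound \eqref{fbounds}, invoke Proposition \ref{p=1_minapproxthm} to control $\min H_n$ and $\min L_n$ almost surely for large $n$, and then convert the length bound into a box count via a covering argument at scale $\tau_n = \epsilon_n/\sqrt d$. Your covering step (arclength-parameterize $l_\w$, cut into segments of diameter $\leq\tau_n$, observe each segment meets only a bounded number of grid cubes) is just a slightly more explicit phrasing of the paper's observation that visiting $2^d+1$ distinct boxes forces Euclidean travel of at least $\epsilon_n/\sqrt d$.
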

\begin{proof}
  Any visit of the path $\w$ or $\v$ to $2^d + 1$ distinct boxes has an Euclidean length
  of at least $\epsilon_n/\sqrt{d}$, since not all $2^d+1$ boxes can
  be adjacent. Recalling \eqref{fbounds}, and the formulas \eqref{L_n_equation} and \eqref{H_n_eq}, such a visitation therefore has a
  $H_n$ cost or $L_n$ cost of at least $m_1 \epsilon_n/\sqrt{d}$. So, we may bound
  the number of boxes visited by $\w$ or $\v$ by  $C' H_n(\w) / \epsilon_n$, where $C'=(2^d+1)\sqrt{d}/m_1$ depends on the dimension and $f$, but not on
  the path $\w$ or $\v$. Recalling Proposition \ref{p=1_minapproxthm}, we have with respect to realizations in $A_1\cap A_2$ that  
	$\lim \min H_n = \lim \min L_n = \min F<\infty$. The lemma then follows with say $C= 2C'\min F$.
\end{proof}

The next result shows that optimal paths $\w\in \argmin H_n$ and $l_{\v}\in \argmin L_n$ cannot have `long necks', and gives a bound on the number of points nearby an edge in the graph.

\begin{lemma} \label{initialwbounds} 
Suppose $d\geq 2$ and $p = 1$. Fix a realization $\{X_i\}$ in the probability $1$ set $A_1$.  Suppose $\w
\in \argmin H_n$ is an optimal path. If $i < j$ is such that
  $|w_i - w_j| < \epsilon_n$, then 
	\begin{equation}
	\label{w_k_eq}
	w_k \in B(w_i,C\epsilon_n), \text{for } i\leq k\leq j,
	\end{equation}
	where $C = 2(m_2 / m_1)$.

  Further, let $\Theta_n = \sup_{i,j:|w_i - w_j| < \epsilon_n} |i-j|$, and suppose $\epsilon_n = n^{-\delta}$, where
  $\delta > 1/(\beta + d)$ and $\beta > 0$. Then, with respect to realizations in a probability $1$ subset of $A_1$, for all large $n$, we have
	\begin{equation}
	\label{theta_eq}
	\Theta_n \leq \epsilon_n^{-\beta}.
	\end{equation}
	
	Suppose now $l_\v\in \argmin L_n$. The same conclusions \eqref{w_k_eq} and \eqref{theta_eq} hold with $\v$ in place of $\w$.
	
\end{lemma}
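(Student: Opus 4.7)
The plan is to handle the two claims separately, exploiting the fact that when $p=1$ both $H_n$ and $L_n$ are path-length invariant: by $1$-homogeneity of $f$, one computes $H_n(\w) = \sum_{k=0}^{m-1} f(w_k, w_{k+1}-w_k)$ and $L_n(l_\v) = \sum_{k=0}^{m-1} \int_0^1 f(l_{v_k,v_{k+1}}(t), v_{k+1}-v_k)\,dt$, both sums of edge costs unaffected by the total number of edges. This invariance permits free `shortcut' exchanges, which is what drives the first claim.

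For the first claim, suppose $0<|w_i - w_j| < \epsilon_n$, so that the edge $\{w_i, w_j\}$ is present in $\mathcal{G}_n(a,b)$. Let $\w'$ be obtained from $\w$ by replacing the subpath $(w_i, w_{i+1}, \ldots, w_j)$ by the direct edge. Minimality of $\w$ forces $H_n(\w) \leq H_n(\w')$; after cancelling common edge costs, this reduces to
\[
\sum_{k=i}^{j-1} f(w_k, w_{k+1}-w_k) \leq f(w_i, w_j - w_i).
\]
Applying the ellipticity bounds \eqref{fbounds} to both sides yields $m_1\sum_{k=i}^{j-1}|w_{k+1}-w_k| \leq m_2 |w_j - w_i| < m_2\epsilon_n$, and the Euclidean triangle inequality then gives $|w_k - w_i| \leq (m_2/m_1)\epsilon_n < C\epsilon_n$ for all $i \leq k \leq j$. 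The identical exchange argument, applied to $L_n$ via its edge-cost decomposition above, yields the same conclusion for $\v$.

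For the second claim, the first part implies that whenever $|w_i - w_j|<\epsilon_n$, every intermediate vertex lies in $B(w_i, C\epsilon_n)$, whence
\[
\Theta_n + 1 \leq \sup_{x \in D}\#\big\{\mathcal{X}_n \cap B(x, C\epsilon_n)\big\}.
\]
It thus suffices to control the right-hand side. By the density bound \eqref{rho_assumption}, the expected count in any such ball is at most $c_0\, n\epsilon_n^d$ for a constant $c_0=c_0(d,C,c)$. A Chernoff estimate combined with a union bound over an $\epsilon_n$-cover of $D$ of cardinality $O(\epsilon_n^{-d})$, together with Borel-Cantelli (using $n\epsilon_n^d \geq c'\log n$, which follows from \eqref{rateassumption}), produces a probability $1$ set on which $\sup_{x\in D}\#\{\mathcal{X}_n\cap B(x,C\epsilon_n)\} \leq K n\epsilon_n^d$ for all large $n$ and some constant $K$. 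Substituting $\epsilon_n = n^{-\delta}$ gives $\Theta_n \leq K n^{1-\delta d}$; the hypothesis $\delta > 1/(\beta+d)$ rearranges to $1 - \delta d < \delta\beta$, so $\Theta_n \leq n^{\delta\beta} = \epsilon_n^{-\beta}$ for all large $n$. The same reasoning carries over verbatim to $\v$.

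The main obstacle is the probability $1$ uniform point-count estimate in the second claim, which requires concentration plus a covering argument plus Borel-Cantelli; the lower bound on $\epsilon_n$ in \eqref{rateassumption} is precisely what enables the Borel-Cantelli summability and also what makes the two exponents close out. By contrast, the first claim is essentially a geometric consequence of path-length invariance at $p=1$, and requires no probabilistic input beyond membership in $A_1$.
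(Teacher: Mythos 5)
Your proof is correct. The first claim is the same shortcut/exchange argument as the paper's, just run directly rather than by contradiction: the paper assumes some $w_k$ escapes $B(w_i,2(m_2/m_1)\epsilon_n)$ and shows the subpath then costs at least $2f(w_i,w_j-w_i)$, contradicting optimality, whereas you use optimality against the one-edge competitor to bound the total Euclidean length of the subpath by $(m_2/m_1)\epsilon_n$ outright (which in fact gives the constant $m_2/m_1$ rather than $2(m_2/m_1)$); both versions rely, as you correctly note, on the $p=1$ edge-additivity of $H_n$ and $L_n$, and both tacitly use that the edge $\{w_i,w_j\}$ is admissible, i.e.\ $w_i\neq w_j$, which holds a.s.\ for optimal paths since the sampling distribution has a density. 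For the second claim the routes differ mildly: the paper bounds $|i-j|$ by the number of sample points in $B(w_i,C\epsilon_n)$, applies the Poissonian tail $\binom{n-1}{k}p^k\le (np)^k/k!$ at the level $k=\lceil\epsilon_n^{-\beta}\rceil$, and takes a union bound over the $n$ ball centers $X_i$; you instead control $\sup_{x\in D}\#\{\mathcal{X}_n\cap B(x,C\epsilon_n)\}$ via a Chernoff bound at the mean scale $n\epsilon_n^d$ together with an $O(\epsilon_n^{-d})$ covering of $D$, obtaining the sharper intermediate bound $\Theta_n\lesssim n\epsilon_n^d$ and then checking $n^{1-\delta d}\le n^{\delta\beta}$, which is exactly the hypothesis $\delta>1/(\beta+d)$. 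Both concentration arguments are standard and both close; yours has the small advantage of isolating the natural quantity $n\epsilon_n^d$ (and of making transparent where \eqref{rateassumption} enters, via $n\epsilon_n^d\gtrsim\log n$ in the Chernoff step), while the paper's avoids the covering step by centering the balls at the sample points themselves.
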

\begin{proof}
  We first show \eqref{w_k_eq}.
  If one of the points $\{w_k\}_{k=i}^{j}$ is more than an Euclidean distance $2(m_2/m_1) \epsilon_n$ away from $w_i$, then, recalling \eqref{fbounds}, we have 
	\begin{eqnarray*}
	\sum_{k=i}^{j-1}f(w_k,w_{k+1}-w_k)&\geq & m_1\sum_{k=i}^{j-1}|w_{k+1}-w_k|\\
	& \geq& 2m_2\epsilon_n\geq 2m_2|w_{j}-w_i| \ \geq\  2f(w_i,w_j-w_i).
	\end{eqnarray*}
		But, this implies that the path connecting $w_i$ and $w_j$ in one step would be less costly, with respect to $H_n$, than $\w$. Since $\w$ was taken to be minimal, all points $\{w_k\}_{k=i}^{j}$ therefore must belong to $B(w_i, 2(m_2/m_1)\epsilon_n)$.
		
		Suppose now $l_\v\in \argmin L_n$ and recall the form of $L_n$ when $p=1$ in \eqref{L_n_equation}.  Similarly, if one of the points $\{v_k\}_{k=i}^j$ is away from $v_i$ by $2(m_2/m_1)\epsilon_n$, we have
	\begin{eqnarray*}
	&&\sum_{k=i}^{j-1}\int_{0}^{1} f(l_{v_k,v_{k+1}}(t),v_{k+1}-v_k)dt \geq m_1\sum_{k=i}^{j-1}|v_{k+1}-v_k|\\
	& &\ \ \ \ \ \ \ \ \ \ \ \ \geq 2m_2\epsilon_n\geq 2m_2|v_{j}-v_i| \ \geq\  2\int_0^1 f(l_{v_i,v_j}(t),v_j-v_i)dt,
	\end{eqnarray*}	
	also a contradiction of minimality of $l_\v$.

  We now consider \eqref{theta_eq}.  The proof here is a count bound with respect to $\w$.  The argument with respect to $\v$ is exactly the same with $\v$ in place of $\w$.  
  
  First, $|j-i|$ is bounded by the number $N_{i,n}$ of points
  $\{X_i\}_{i=1}^n$, distinct from $w_i$, in the ball $B(w_i,C\epsilon_n)$.  Then,
  $N_{i,n}$ is $\mbox{Binomial}(n-1,p)$ where
  $p = \nu(B(X_i,C\epsilon_n))$. For $k\geq 1$, we have
    $$
      \mathbb{P}(N_{i,n} \geq k) \leq
			\binom{n-1}{k}p^k\ \leq \ \frac{(np)^k}{k!}.$$
    Recalling that $\nu = \rho dx$ and $\rho$ is bounded, we have
    $p \leq \|\rho\|_\infty {\rm Vol}(B(0,1)) \epsilon_n^d$, and so
    $P(N_{i,n} \geq k) \leq (C'n\epsilon_n^d)^k/{k!}$ for some constant $C'$.

 Let
    $N_n = \max \{N_{i,n}\}_{i=1}^n$. Then, a union bound gives that
    $$P(N_n \geq k) \leq \frac{n}{k!} \exp \{ k (\log n + d \log
    \epsilon_n + \log C') \}.$$

    Taking $k = \lceil \epsilon_n^{-\beta} \rceil$, and noting $k!\geq \sqrt{2\pi}e^{-k}k^{k+1/2}$, yields that
    \begin{equation} \label{Nntail}
      P(N_n \geq \epsilon_n^{-\beta}) \leq \frac{n}{\sqrt{2\pi} }\exp \{ \epsilon_n^{-\beta} (\log n + (\beta + d) \log \epsilon_n + \log C' +1) \}.
    \end{equation}
    If $\epsilon_n$ is in the form $\epsilon_n = n^{-\delta}$, then the right hand side of
    \eqref{Nntail} is summable when $(\beta + d)\delta > 1$. 
		
		Hence, by Borel-Cantelli lemma, for realizations in the intersection of a probability $1$ set and $A_1$ say, we have 
$\Theta_n \leq \max_i N_{i,n}\leq N_n\leq \epsilon_n^{-\beta}$ for all large $n$, and \eqref{theta_eq} follows. 
  \end{proof}

  We now give a lower bound on the cost of certain `long necks',
  that is the cost of an optimal $H_n$-path $\w$ of moving away from two close by vertices.
	
  \begin{lemma} \label{exitcost} 
	Suppose $d\geq 2$, $p = 1$ and that $f$ also satisfies
    $(TrIneq)$, and $(Pythag)$ with $\alpha > 1$.  Fix a realization $\{X_i\}$ in the probability $1$ set $A_1$.  Suppose
    $\w\in \argmin H_n$ is an optimal path, and let $i<j$ be indices such
    that $|w_i - w_j| < \epsilon_n$. Let $\l$ denote the straight line
    segment from $w_i$ to $w_j$.  
  Consider a neighborhood $\mathcal{A} = \cup_{x \in \l} B(x,r)$ of $\l$, with
  $r=\epsilon_n^\alpha$.  
	
	Then, if there is a point $w_k\not\in \mathcal{A}$ for
  some $i<k<j$, there is a constant $C=C(\alpha,f)$ such that
  \begin{equation} \label{show}
    \sum_{q=i}^{j-1} f(w_i,w_{q+1}-w_q) \geq f(w_i,w_{j}-w_i) + Cr^\alpha.
  \end{equation}
  
Suppose now $l_\v\in \argmin L_n$.  Then, \eqref{show} holds with $\v$ in place of $\w$.
\end{lemma}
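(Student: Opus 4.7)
\medskip
\noindent \textbf{Proof plan.}
The plan is to deduce inequality \eqref{show} from the axioms (TrIneq) and (Pythag) applied at the fixed base point $x=w_i$, using Lemma \ref{initialwbounds} to verify the geometric smallness hypotheses. Throughout, all relevant computations involve only the vertex positions, so the argument for an optimal $L_n$ path $l_{\v}$ will be word-for-word identical once Lemma \ref{initialwbounds} is invoked with $\v$ in place of $\w$; I will carry out the $\w$ case only.

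First I would telescope with (TrIneq). Fix $x=w_i$ and apply the inequality $f(x,v-w)\leq f(x,v-u)+f(x,u-w)$ iteratively with $v=w_{q+1}$, $u=w_q$, $w=w_i$ (using $f(x,0)=0$ from the $p$-homogeneity of $f$). This gives
\begin{equation*}
  f(w_i, w_k-w_i) \leq \sum_{q=i}^{k-1} f(w_i, w_{q+1}-w_q),
\end{equation*}
and an analogous telescoping from $k$ to $j$, yielding
\begin{equation*}
  \sum_{q=i}^{j-1} f(w_i, w_{q+1}-w_q) \geq f(w_i, w_k-w_i) + f(w_i, w_j-w_k).
\end{equation*}

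Next I would verify the hypotheses of (Pythag) with $x=w_i$, $u=w_i$, $v=w_j$, $w=w_k$, and scale $r=\epsilon_n^\alpha$. By Lemma \ref{initialwbounds}, since $|w_i-w_j|<\epsilon_n$ we have $w_k\in B(w_i, C\epsilon_n)$ for $C=2(m_2/m_1)$, so $|uw|\leq C\epsilon_n$, $|uv|\leq \epsilon_n$, and $|vw|\leq (C+1)\epsilon_n$. Hence for $n$ sufficiently large all three distances fall below any fixed $\eta<1$. The separation $\mathrm{dist}(w_k, \mathrm{line}(w_i,w_j))\geq r$ is exactly the hypothesis $w_k\notin\mathcal{A}$, and $|uv|=|w_i-w_j|\leq \epsilon_n = r^{1/\alpha}$ certifies the remaining bound with constant $1$. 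Applying (Pythag) then gives
\begin{equation*}
  f(w_i, w_k-w_i) + f(w_i, w_j-w_k) \geq f(w_i, w_j-w_i) + C r^\alpha,
\end{equation*}
with a constant $C=C(\alpha,f)$, and chaining with the previous display produces \eqref{show}.

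There is no serious obstacle here: the work is really in the axioms themselves and in Lemma \ref{initialwbounds}, which already controls the only nontrivial geometric input (namely that all of $w_i,w_k,w_j$ lie in a ball of radius $O(\epsilon_n)$ around $w_i$, so the small-scale hypotheses of (Pythag) are met for all large $n$). The only point that deserves a moment's care is that (TrIneq) is applied with the base point $x=w_i$ frozen, which is why the claim \eqref{show} is stated with $f(w_i, w_{q+1}-w_q)$ rather than $f(w_q, w_{q+1}-w_q)$. For the $L_n$ case, one invokes the second assertion of Lemma \ref{initialwbounds} to get the analogous ball containment for $\v$, and then the three lines above go through verbatim.
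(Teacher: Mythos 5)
Your proposal is correct and follows essentially the same route as the paper's proof: telescoping (TrIneq) at the frozen base point $x=w_i$ to reduce to the three points $w_i,w_k,w_j$, then checking the smallness and separation hypotheses of (Pythag) via Lemma \ref{initialwbounds} (the paper uses the constant $c=2(m_2/m_1)$ in the bound $|w_iw_j|\leq cr^{1/\alpha}$ where you use $c=1$, an immaterial difference). Nothing further is needed.
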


\begin{proof}. The argument for $\v$ is the same as for $\w$, which we now present.
  Suppose a point $w_k$ is at least an Euclidean
  distance $r$ from $\l$.  By the
  (TrIneq) condition,
  $\sum_{q=i}^{j-1} f(w_i,w_{q+1}-w_q) \geq f(w_i,w_k - w_i) + f(w_i,
  w_j-w_k)$.

  By Lemma \ref{initialwbounds}, as $|w_i-w_j|<\epsilon_n$, we have
  $|w_kw_i| = |w_k-w_i|\leq 2(m_2/m_1)\epsilon_n$, which is strictly less than an $\eta<1$ for all large $n$. We also conclude
  $|w_jw_k|, |w_i w_j|<\eta<1$ for all large $n$.  In addition,
  $2(m_2/m_1)r^{1/\alpha}=2(m_2/m_1)\epsilon_n \geq |w_iw_j|$.  Thus,
  by (Pythag) with $x=w_i$, we obtain
  $f(w_i,w_k - w_i) + f(w_i, w_j-w_k)\geq f(w_i, w_j - w_i) +
  Cr^\alpha$, where $C=C(\alpha,f)$.

  Hence, \eqref{show} follows by combining the inequalities.
\end{proof}

We now bound the number of points of an optimal path in a box, the main estimate used in the proof of Proposition \ref{gcompactness}. 
The argument is in two steps.  In the first step, using a rough count on the number of vertices of the path within a given box, we may approximate the contribution to $H_n$ and $L_n$ from the vertices 
in the box in terms of a `localized' cost. Then, we use (Pythag), applied to the `localized' cost, to deduce that the optimal path in the box is trapped in a `small' set in the box.  The second step then is to show that such `small' sets contain only a constant number of points in $\{X_i\}$.

  \begin{lemma} \label{kbound}
	    Consider the assumptions of the second parts of Theorems \ref{p=1approxthm} and \ref{linear_p=1approxthm}.  
		Suppose
    $\w \in \argmin H_n$ is an optimal path. Then, with respect to realizations $\{X_i\}$ in a probability $1$ subset of $A_1$, for all large $n$,
    there is a constant $K$, 
		such that
    $| \{w_j\}_{j=1}^n \cap \Box_{n,z}| \leq K$
    for all
    $z \in \mathbb{Z}^d$.
    
    Suppose now $l_\v\in \argmin L_n$.  Then, the same statement above holds with $\v$ in place of $\w$.
  \end{lemma}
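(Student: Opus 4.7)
The plan is to follow the two-step outline from the text. First, I would trap the portion of any optimal path between its first and last visits to a fixed box inside a thin tube around the corresponding chord, by pitting the Pythagoras gain from Lemma~\ref{exitcost} against a localization error controlled via (Lip). Second, I would count sample points in such tubes by a union bound; this is where the second clause of \eqref{additionaleps} enters. The $H_n$-minimizer $\w$ and the $L_n$-minimizer $\v$ are handled in parallel because Lemmas \ref{initialwbounds} and \ref{exitcost} are stated for both.

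For the first step, I would set $\beta := (2-\alpha^2)\eta$ and restrict to a realization in $A_1$ on which Lemma~\ref{initialwbounds} gives $\Theta_n \leq \epsilon_n^{-\beta}$; this uses the first clause $\delta > [(2-\alpha^2)\eta + d]^{-1}$ of \eqref{additionaleps}. Label the vertices of $\w$ inside a box $\Box_{n,z}$ by $w_{j_0}, \ldots, w_{j_s}$ in increasing order. Since the box has diameter $\leq \epsilon_n$, one has $|w_{j_0} - w_{j_s}| \leq \epsilon_n$, and Lemma~\ref{initialwbounds} confines all $w_k$ with $j_0 \leq k \leq j_s$ to $B(w_{j_0}, C\epsilon_n)$ with $j_s - j_0 \leq \epsilon_n^{-\beta}$. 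Assuming for contradiction that some intermediate $w_k$ lies outside the tube $T(w_{j_0}, w_{j_s})$ of radius $\epsilon_n^{\alpha}$ around the chord, Lemma~\ref{exitcost} applied at $(i,j) = (j_0, j_s)$ yields the localized Pythagoras gain
\[
  \sum_{q=j_0}^{j_s-1} f(w_{j_0}, w_{q+1} - w_q) \geq f(w_{j_0}, w_{j_s} - w_{j_0}) + C\epsilon_n^{\alpha^2}.
\]
I would then use (Lip) with $|w_q - w_{j_0}| \leq C\epsilon_n$, $|w_{q+1} - w_q| \leq \epsilon_n$, and $(j_s - j_0) \leq \epsilon_n^{-\beta}$ to swap the localized sum for the true contribution $\sum_{q=j_0}^{j_s-1} f(w_q, w_{q+1} - w_q)$ with error $O(\epsilon_n^{2-\beta})$. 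Since $\eta < 1$ forces $2-\beta > \alpha^2$, this error is beaten by the Pythagoras gain, and, using $1$-homogeneity and \eqref{H_n_eq}, replacing the arc $w_{j_0} \to \cdots \to w_{j_s}$ in $\w$ by the single edge $w_{j_0} \to w_{j_s}$ would strictly decrease $H_n(\w)$, contradicting optimality. Hence the full arc lies in $T(w_{j_0}, w_{j_s})$. The identical argument applies to $\v$, with (Lip) absorbing the inner $t$-integral in \eqref{L_n_equation}.

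For the second step, a tube $T(x,y)$ around a segment of length $\leq \epsilon_n$ has volume $\leq C'\epsilon_n^{1 + \alpha(d-1)}$. By the density bound \eqref{rho_assumption}, the number of $X_i$ in $T(x,y)$ is stochastically dominated by $\mathrm{Binomial}(n,p)$ with $np \leq c\, n\, \epsilon_n^{1+\alpha(d-1)} = c\, n^{-\xi}$, where $\xi := \delta[\alpha(d-1) + 1] - 1 > 0$ by the second clause of \eqref{additionaleps}. A union bound over pairs of sample points, together with $\binom{n}{k} p^k \leq (np)^k/k!$, would then yield
\[
 \P \Big( \max_{i_0, i_s}\, |T(X_{i_0}, X_{i_s}) \cap \mathcal{X}_n| \geq K \Big) \leq C'' n^{2 - K\xi},
\]
which is summable in $n$ for $K > 2/\xi$. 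Borel-Cantelli then produces a deterministic $K = K(d,\rho,\alpha)$ bounding the tube occupancy almost surely for all large $n$, and since $\{w_{j_0}, \ldots, w_{j_s}\} \subset T(w_{j_0}, w_{j_s}) \cap \mathcal{X}_n$, one concludes $s + 1 \leq K$. The main obstacle is the first step: balancing the Pythagoras gain $\epsilon_n^{\alpha^2}$ against the localization error $\epsilon_n^{2-\beta}$ forces the restriction $\alpha < \sqrt{2}$ and dictates the precise form of the decay condition \eqref{additionaleps}.
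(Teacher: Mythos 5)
Your proposal is correct and follows essentially the same two-step argument as the paper: trap the in-box arc of the optimal path inside the $\epsilon_n^{\alpha}$-tube by playing the (Pythag) gain $C\epsilon_n^{\alpha^2}$ from Lemma \ref{exitcost} against the (Lip)-localization error $O(\epsilon_n^{2-\beta})$ with $\beta=(2-\alpha^2)\eta$ (via Lemma \ref{initialwbounds}), and then run a binomial/union-bound/Borel--Cantelli count of sample points in thin tubes using the second clause of \eqref{additionaleps}. The only cosmetic differences are that the paper's union bound runs over the $O(\epsilon_n^{-d})$ boxes rather than over pairs of sample points, and your summability threshold should be $K>3/\xi$ (so that $2-K\xi<-1$) rather than $K>2/\xi$, which is immaterial since $K$ only needs to be a large fixed constant.
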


  \begin{proof}
    We will give the main argument for $\w$ and indicate modifications with respect to $\v$.  Consider a box $\Box \coloneqq \Box_{n,z}$. Boxes with at most one point trivially satisfy the claim in the lemma if say $K\geq 2$.  Suppose now that there are at least two points 
    in the box $\Box$.
		
		\vskip .1cm
		\noindent {\it Step 1.} Let 
		$w_i$ and $w_j$ be the first and
    last points of $\w$ in the box, that is, with the smallest and largest indices
    respectively.  By Lemma \ref{initialwbounds}, as $|w_i-w_j|<\epsilon_n$, we have
    $w_k \in B(w_i,C'\epsilon_n)$ for $i\leq k\leq j$.  
		Hence, by (Lip), we have
$    |\sum_{k=i}^{j-1} f(w_k,w_{k+1}-w_k) - \sum_{k=i}^{j-1} f(w_i,w_{k+1}-w_k)|
    \leq C' \epsilon_n^2 |j-i|$.
  Now, also by Lemma \ref{initialwbounds}, when $\delta> (\beta+d)^{-1}$ for $\beta>0$, we have
  $|j-i| \leq \epsilon_n^{-\beta}$.  Hence, the following estimate, with respect to a `localized' energy, where $x=w_i$ is fixed, is obtained:
  \begin{equation}
	\label{initial_eq}
	\Big |\sum_{k=i}^{j-1} f(w_k,w_{k+1}-w_k) - \sum_{k=i}^{j-1} f(w_i,w_{k+1}-w_k)\Big | \leq C' \epsilon_n^{2-\beta}.
	\end{equation}

Similarly, when $\v$ is considered, following the same reasoning using Lemma \ref{initialwbounds} and (Lip), we may obtain \eqref{initial_eq} with $\v$ in place of $\w$, and moreover
$$
\Big | \sum_{k=i}^{j-1} \int_0^1 f(l_{v_k, v_{k+1}}(t), v_{k+1}-v_k)dt - \sum_{k=i}^{j-1} f(v_k, v_{k+1}-v_k)\Big|
\leq C' \epsilon_n^2 |j-i| \leq C' \epsilon^{2-\beta}.
$$
Hence, combining these two estimates, we obtain that 
\begin{equation}
\label{linear_initial_eq}
\Big |\sum_{k=i}^{j-1} \int_0^1 f(l_{v_k, v_{k+1}}(t), v_{k+1}-v_k)dt - \sum_{k=i}^{j-1} f(v_i,v_{k+1}-v_k)\Big | \leq 2C' \epsilon_n^{2-\beta}.
\end{equation}

  Returning to the path $\w$, by Lemma \ref{exitcost}, noting \eqref{initial_eq}, any path $(w_i,\ldots,w_j)$ exiting
  $\mathcal{A}$, the $r=\epsilon_n^\alpha$-neighborhood of the line segment from $w_i$ to $w_j$,
  is costlier, with respect to $H_n$, than the $H_n$-cost $f(w_i,w_j-w_i)$ of a straight path
  connecting $w_i$ to $w_j$ in a single hop, as follows:
  \begin{equation}
  \label{H_n_tri_ineq}
  \sum_{q=i}^{j-1} f(w_q, w_{q+1}-w_q) - f(w_i, w_j-w_i) \geq Cr^\alpha - C'\epsilon_n^{2-\beta} = C\epsilon_n^{2\alpha} - C' \epsilon_n^{2-\beta}.
  \end{equation}
   
  Let us now consider $\v$.  Since $|v_i - v_j|\leq C\epsilon_n$ by Lemma \ref{initialwbounds}, using (Lip), we have that
  \begin{equation}
  \label{linear_onestep}
  \big| f(v_i, v_j-v_i) - \int_0^1 f(l_{v_i, v_j}(t), v_j-v_i)dt\big| \leq C'\epsilon_n^2.
  \end{equation}
   Following the same reasoning given with respect to $\w$, we may obtain \eqref{H_n_tri_ineq} with $\v$ in place of $\w$. Then, noting \eqref{linear_initial_eq},  
  a path $(v_i, \ldots, v_j)$, exiting the $r=\epsilon_n^\alpha$-neighborhood of the line segment from $v_i$ to $v_j$, has $L_n$ cost more than the one step $H_n$ cost $f(v_i, v_j-v_i)$ by the amount $Cr^\alpha - 2C'\epsilon_n^{2-\beta}$.  By \eqref{linear_onestep}, this $H_n$ cost differs from the one step $L_n$ cost $\int_0^1 f(l_{v_i, v_j}(t), v_j-v_i)dt$ of moving from $v_i$ to $v_j$ by $C'\epsilon_n^{2}$.
  
  Therefore, the cost savings of moving in one step, in considering $\w$ or $\v$ which exit the $r$-neighborhood, is bounded below by
  $C r^\alpha - 3C'\epsilon_n^{2-\beta} = O(\epsilon_n^{2\alpha} - \epsilon_n^{2-\beta})$, which is positive, for all large $n$, when $\alpha^2<2-\beta$.  This is the case when we fix $\beta = (2-\alpha^2)\eta>0$, for an $0<\eta<1$, since $1<\alpha<\sqrt{2}$. 
  
  Hence, with this choice of $\beta$, such exiting paths are not optimal, and all the points
  $\{w_i,\ldots, w_j\}$ or $\{v_i, \ldots, v_j\}$  in the box must belong to the $r=\epsilon_n^\alpha$-neighborhood of the line segment connecting the $i$th and $j$th points.

	 We note, given the value of $\beta$, to use Lemma \ref{initialwbounds} above, the exponent $\delta$ should satisfy
	$\delta> [(2-\alpha^2)\eta +d]^{-1}$, afforded by our assumptions.
	
\vskip .1cm

\noindent {\it Step 2.}. We now focus on $\w$ as the following counting argument is the same with respect to $\v$.
  We will count the points in the small set $\mathcal{A}$.  The cardinality
  $|\{w_{k}\}_{k=i}^j|=|j-i|$ is bounded by
  $| \mathcal{X}_n \cap \mathcal{A}| = 2 + N_{n,z}$, where $N_{n,z}$ is the Binomial$(n-2, \nu(\mathcal{A}))$ count of the number of points in $\mathcal{X}_n$ distinct from $w_i,w_j$ in the set $\mathcal{A}$.  Note, as $\mathcal{A}$ is nearly a cylinder
  with length $\epsilon_n$ and radius $\epsilon_n^\alpha$, and $\rho$ is bounded, we have that
  $\nu(\mathcal{A}) \leq C(\rho) \epsilon_n^{\alpha (d - 1) +1}$.  Then, 
$$P(N_{n,z} \geq K)\leq (n^K/K!) \nu(A)^{K} \leq C(\rho)^Kn^K\epsilon_n^{K\alpha d + K(1-\alpha)}.$$ 
Hence, by a union of events bound, as the number of boxes intersecting $D$ is bounded by $C'\epsilon_n^{-d}$,
we have
\begin{equation}
\label{count_prob}
P(\exists z \in \mathbb{Z}^d {\rm \ such\ that\ } N_{n,z} \geq K) \leq
C'C(\rho)^Kn^K\epsilon_n^{K\alpha d + K(1-\alpha)-d}.
\end{equation}

Suppose $\epsilon_n$ is of the form $\epsilon_n = n^{-\delta}$ for $0<\delta<1/d$.  If $d<K<\infty$ and
\begin{equation}
\label{delta_eq}
\delta> (K+1)/[K(\alpha(d-1) + 1) -d],
\end{equation} 
the display \eqref{count_prob} is summable in $n$.  In particular, when $\delta>[\alpha(d-1)+1]^{-1}$, part of our assumptions, a large but fixed $K$ can be chosen so that \eqref{delta_eq} holds.

 Hence, by Borel-Cantelli lemma, on the intersection of a probability $1$ set and $A_1$ say, we recover the claim for all large $n$ 
 that the path visits at most $K$ points between the first and
 last visit to a visited box.
\end{proof}

\subsubsection{Convergence of Optimal Paths}
\label{convergence_subsect}

We now consider the behavior of the optimal paths, in analogy to Theorem \ref{theorem2}, for the energy $H_n$.

\begin{prop}
\label{p=1_convergethm}
Consider the assumptions for the second parts of Theorems \ref{p=1approxthm} and \ref{linear_p=1approxthm}.  Consider a discrete path $\w^{(n)} \in \argmin H_n$ and its linear interpolation, $l_{\w^{(n)}}$.  Then, with respect to realizations in a probability $1$ subset of $A_1\cap A_2$, for any subsequence of $\{l_{\w^{(n)}}\}$, and correspondingly of $\{\w^{(n)}\}$, there is a further subsequence of the linear paths which converges uniformly to a limit path $\gamma\in \argmin F$, and of the discrete paths in the Hausdorff sense to $S_\gamma$.

If $F$ has a unique (up to reparametrization) minimizer $\gamma$, then the whole sequence $\{\w^{(n)}\}$ converges, $\lim_{n\rightarrow\infty}d_{haus}(\w^{(n)},S_\gamma) = 0$.

Consider now a path $l_{\v^{(n)}}\in \argmin L_n$.  The same conclusions holds for $\{\v^{(n)}\}$ in place of $\{\w^{(n)}\}$.
\end{prop}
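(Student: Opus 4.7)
The plan is to follow the same three-part Gamma-convergence scheme used for Theorems \ref{theorem1} and \ref{theorem2}, substituting the newly-established \emph{compactness} property (Proposition \ref{gcompactness}) for the $p=1$, $\argmin H_n$/$\argmin L_n$ setting, and using the comparison estimates \eqref{fgquasibound}--\eqref{l_eq} together with the convergence of minimal costs from Proposition \ref{p=1_minapproxthm}.

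First, fix a realization $\{X_i\}$ in the probability $1$ subset of $A_1\cap A_2$ on which Propositions \ref{p=1_minapproxthm} and \ref{gcompactness} hold. Let $\{l_{\w^{(n)}}\}$ be the linear interpolations of optimal $H_n$-paths, and consider any subsequence. By Proposition \ref{gcompactness}, a further subsequence $\{l_{\w^{(n_k)}}\}$ converges uniformly to some $\gamma\in \Omega(a,b)$. Applying the `liminf' Lemma \ref{liminf} (whose hypothesis $\sup_n \int|\dot l_{\w^{(n_k)}}|\,dt<\infty$ is immediate from the uniform Lipschitz bound $\|\dot l_{\w^{(n)}}\|_\infty\leq KC$ extracted in Proposition \ref{gcompactness}), we obtain
\begin{equation*}
F(\gamma)\ \leq\ \liminf_{k\to\infty} F(l_{\w^{(n_k)}})\ =\ \liminf_{k\to\infty} L_{n_k}(l_{\w^{(n_k)}}).
\end{equation*}
Next, I would invoke the $L_n$--$H_n$ comparison \eqref{l_eq}, which gives $L_{n_k}(l_{\w^{(n_k)}})\leq (1+c_1 m_1^{-1}\epsilon_{n_k})H_{n_k}(\w^{(n_k)})=(1+o(1))\min H_{n_k}$, and combine with Proposition \ref{p=1_minapproxthm} (which provides $\min H_n\to \min F$) to conclude $F(\gamma)\leq \min F$, hence $\gamma\in \argmin F$. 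The argument for $\{l_{\v^{(n)}}\}$ is even more direct: $F(l_{\v^{(n)}}) = L_n(l_{\v^{(n)}})=\min L_n\to \min F$ by Proposition \ref{p=1_minapproxthm}, and the same liminf step gives $F(\gamma)\leq \min F$.

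To upgrade from uniform convergence of the interpolations to Hausdorff convergence of the discrete point sets, I would invoke Proposition \ref{unifhausdorff} (case (2)), which applies verbatim since $\w^{(n_k)}\in V_{n_k}(a,b)$ and $l_{\w^{(n_k)}}\to \gamma$ uniformly; this yields $d_{haus}(\w^{(n_k)},S_\gamma)\to 0$, and similarly for $\v^{(n_k)}$. Finally, if $F$ has a unique minimizer $\gamma$ up to reparametrization, then $S_\gamma$ is invariant under reparametrization, so every subsequential Hausdorff limit equals $S_\gamma$; a standard subsequence argument then upgrades subsequential convergence to convergence of the whole sequence, $d_{haus}(\w^{(n)},S_\gamma)\to 0$, and likewise for $\v^{(n)}$.

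The main subtlety, rather than an obstacle per se, is simply the need to verify equicontinuity of $\{l_{\w^{(n_k)}}\}$ in order to legitimately apply Lemma \ref{liminf}; but this is delivered by the uniform Lipschitz estimate $k_n\epsilon_n \leq KC$ extracted in the proof of Proposition \ref{gcompactness}, so no new analysis is needed here. All genuinely hard work--the geometric lemmas (Lemmas \ref{initialwbounds}, \ref{exitcost}, \ref{kbound}) underpinning the compactness, and the quasinormal-vs-linear comparison leading to $\min H_n,\min L_n\to \min F$--has already been carried out prior to this proposition.
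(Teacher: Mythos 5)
Your proposal is correct and follows essentially the same route as the paper: compactness from Proposition \ref{gcompactness}, the liminf inequality, the $L_n$--$H_n$ comparison together with Proposition \ref{p=1_minapproxthm} to identify the limit as a minimizer, and Proposition \ref{unifhausdorff} for the Hausdorff statements. The only cosmetic difference is that the paper bounds $L_n(l_{\w^{(n)}})$ by $(1+c'\epsilon_n)^2\min L_n$ via \eqref{fgcompare}/\eqref{samelimit0} while you compare directly to $\min H_n$ via \eqref{l_eq}; both are immediate from the same estimates, and your explicit verification of the $\sup_n\int|\dot l_{\w^{(n_k)}}|\,dt<\infty$ hypothesis of Lemma \ref{liminf} is a welcome touch the paper leaves implicit.
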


\begin{proof}  Consider first $\w^{(n)}\in \argmin H_n$.
  By the compactness criterion, Proposition
  \ref{gcompactness}, almost surely, any subsequence of the paths $\{l_{\w^{(n)}}\}$ has a further subsequence $\{l_{\w^{(n_k)}}\}$ converging uniformly
  to a limit $\gamma$. By the `liminf' Lemma
  \ref{liminf},
  $F(\gamma) \leq \liminf_{k \to \infty} F(l_{\w^{(n_k)}})$.  
  
  The same argument and conclusion holds with $l_{\v^{(n)}}\in \argmin L_n$ and $\v^{(n)}$ in place of $l_{\w^{(n)}}$ and $\w^{(n)}$.  
  
  We now show that $\gamma\in \argmin F$.  With respect to optimal $L_n$ paths, as $F(l_{\v^{(n)}})=\min L_n$, and $\min L_n \rightarrow \min F$ a.s. by Proposition \ref{p=1_minapproxthm}, we obtain $F(\gamma)\leq \min F$, and so the desired conclusion.
  
  For $H_n$ optimal paths, 
  we recall an argument in the proof of Theorem \ref{p>1approxthm}.  Using only the `standing assumptions' (allowing $p= 1$) and (Lip), we derived \eqref{fgcompare}, namely, for $\u\in V_n(a,b)$, that
	$|L_n(l_\u) - H_n(\u)|\leq cm_1^{-1}\epsilon_n\min(L_n(l_\u),H_n(\u))$ where $c$ is the constant in (Lip).  Then, as a consequence of Proposition \ref{minprop}, we saw in \eqref{samelimit0} that
	$\min L_n \leq L_n(l_{\w^{(n)}})\leq (cm_1^{-1}\epsilon_n +1)^2\min L_n$.  Since, by Proposition \ref{p=1_minapproxthm},  $\min L_n\rightarrow \min F$ a.s., we conclude that $\gamma\in \argmin F$.

Finally, we remark that the Hausdorff convergences are argued as in the proof of Corollary \ref{hausdorffcor}.
\end{proof}

\section{Appendix}
\label{appendix}
Here we collect some results which we had previously assumed.

\subsection{Nearest-Neighbor Rate}

\begin{prop} \label{nearestneighbor}
Let $\{X_i\}$ be i.i.d. samples from a probability measure $\nu = \rho(x)\, dx$ on a Lipschitz domain $D$, and let \[R_n = \sup_{y \in D} \min_{1 \leq i \leq n} |X_i - y|. \]
Suppose $\rho(x)$ is uniformly bounded below by a positive constant.  Then, there exists a constant $C$, independent of $n$, such that, for almost all realizations $\{X_i\}$, 
\begin{equation*}
  \limsup_{n \to \infty} \frac{R_n n^{1/d}}{(\log n)^{1/d}} \leq C.
\end{equation*}
In particular, when $\epsilon_n$ satisfies \eqref{rateassumption}, for $a,b\in D$, almost surely for all large $n$, there is a path in $V_n(a,b)$ connecting $a,b$ via the graph $\mathcal{G}_n$. 
\end{prop}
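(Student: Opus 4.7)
The plan is a standard covering, union-bound, and Borel-Cantelli argument. Fix $r_n = C(\log n / n)^{1/d}$ for a constant $C$ to be determined. I would select a maximal $(r_n/2)$-separated collection $\{x_1,\ldots,x_{N(n)}\} \subset D$; since $D$ is bounded, a volume-packing argument gives $N(n) \leq K r_n^{-d}$ for a constant $K = K(D,d)$. Maximality implies every $y \in D$ lies within $r_n/2$ of some $x_k$, so if each ball $B(x_k, r_n/2)$ contains a sample point, then $R_n \leq r_n$.

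The probability estimate proceeds as follows. Since $D$ is convex and bounded and $\rho \geq c > 0$, there exists $c_0 > 0$ (depending on $D$ and $c$) such that $\nu(B(x,r) \cap D) \geq c_0 r^d$ for every $x \in D$ and every $r \leq r_0$; at a boundary point this follows from the existence of a supporting hyperplane, which ensures at least a half-ball of $B(x,r)$ lies in $D$. Consequently, for $n$ large,
\[
\mathbb{P}\bigl(B(x_k, r_n/2) \cap \mathcal{X}_n = \emptyset\bigr) \leq (1 - c_0 (r_n/2)^d)^n \leq \exp\bigl(-c_0 2^{-d} C^d \log n\bigr) = n^{-c_0 2^{-d} C^d}.
\]
A union bound yields $\mathbb{P}(R_n > r_n) \leq K r_n^{-d} \cdot n^{-c_0 2^{-d} C^d} = O(n^{1 - c_0 2^{-d} C^d}/\log n)$. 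Choosing $C$ so that $c_0 2^{-d} C^d > 2$ makes this summable, and Borel-Cantelli gives $R_n \leq r_n$ for all large $n$ almost surely, which is the desired $\limsup$ bound.

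For the graph-connectivity claim, fix $a,b \in D$. By \eqref{rateassumption} and the first part, almost surely $R_n < \epsilon_n / 3$ for all large $n$. Using convexity of $D$, the segment $\ell(t) = (1-t)a + tb$ lies in $D$; subdividing $[0,1]$ into $m = \lceil 3|b-a|/\epsilon_n \rceil$ equal pieces and setting $u_i = \ell(i/m)$, each nearest neighbor $T_n u_i \in \mathcal{X}_n$ is within $R_n < \epsilon_n/3$ of $u_i$, so $|T_n u_i - T_n u_{i+1}| \leq |u_i - u_{i+1}| + 2R_n < \epsilon_n$, and likewise for the first and last endpoint hops. The sequence $(a, T_n u_1,\ldots,T_n u_{m-1}, b)$ is therefore a path in $V_n(a,b)$.

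The main obstacle is the uniform volume lower bound up to the boundary of $D$; this is the only place where the geometry of $D$ enters nontrivially, and convexity makes it routine via supporting hyperplanes (the Lipschitz-boundary interior-cone condition would also suffice). The remaining steps are bookkeeping with tail bounds and a direct use of convexity to build the connecting path.
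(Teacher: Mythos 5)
Your argument is essentially the paper's: a covering of $D$ at scale $\sim(\log n/n)^{1/d}$, an exponential tail bound for an empty ball, a union bound, and Borel--Cantelli, followed by discretizing the segment from $a$ to $b$ and replacing the discretization points by nearest sample points. One step of your justification is backwards, however: a supporting hyperplane at a boundary point $x$ of a convex $D$ shows that $B(x,r)\cap D$ is \emph{contained in} a half-ball, not that a half-ball is contained in $D$ (at a vertex of a simplex the fraction of $B(x,r)$ inside $D$ can be far less than $1/2$). The needed inequality $\nu(B(x,r)\cap D)\ge c_0 r^d$ is nonetheless true, but the correct route is the interior cone condition (take the convex hull of $x$ with a fixed interior ball of $D$), which is exactly what the paper invokes for Lipschitz domains and which you yourself mention parenthetically as sufficient; with that substitution the proof is correct.
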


\begin{proof}  We first address the claim with respect to $R_n$.
  Let $B(y,r)$ be the Euclidean ball of radius $r$ centered
  at $y \in D$. Since $D$ is Lipschitz, there is a constant
  $c$ such that $m(B(y,r) \cap D) \geq c m(B(y,r))$ for all small $r>0$, where $m$ denotes
  Lebesgue measure (cf. the discussion about cone conditions in
  Section 4.11 of \cite{adams2003sobolev}). It follows that there is a
  constant $c$ such that $m(B(y,r) \cap D) / m(D) \geq c r^d$ for all
  $y \in D$ and all small $r>0$. Since $\nu$ has density $\rho$ bounded below by a positive
  constant, there exists a constant $c$ such that
  $\nu(B(y,r)) \geq c r^d$ for all $y \in D$ and $0 < r < r_0$, where
  $r_0$ is a sufficiently small constant. Therefore, recalling $\mathcal{X}_n = \{X_1,X_2,\ldots,X_n\}$, we have
  \begin{equation} \label{kiss1}
    \begin{aligned} \mathbb{P}(|B(y,r) \cap
      \mathcal{X}_n | = 0 ) &=
      (1 - \nu(B(y,r)))^n \\
      & \leq (1 - cr^d)^n \  
       \leq e^{-cnr^d}.
    \end{aligned}
    \end{equation}
  Let $\{y_1,\ldots,y_k\} \subset D$ be a collection of points so that
  $\sup_{x \in X} \min_{1 \leq i \leq k} |x-y_i| \leq r$. We may take
  the number of points $k$ to satisfy $k \leq c / r^d$ for some constant $c$ independent of $r$, say, by
  choosing $\{y_i\}$ to be a regular grid, with grid length $\sim r$.

  Let $E_i$ denote the event that $|B(y_i,r) \cap \mathcal{X}_n | = 0$, and consider the event $\{ R_n > 2r \}$ that there exists a $y \in D$ with $\min_{1 \leq i \leq n} |X_i-y| > 2r$.  Then, by a triangle inequality argument, we have $\{R_n > 2r \} \subset \cup_{i=1}^k E_i$. Hence, together with \eqref{kiss1}, we have
  \begin{equation} \label{kiss2}  
    \mathbb{P}(R_n > 2r)  \leq  \sum_{i=1}^k \mathbb{P}(E_i) \ \leq \ \frac{c}{r^d} e^{-cnr^d}.
\end{equation}
  Let $r^d = (3 \log n)/(c n)$. Then, \eqref{kiss2} gives a summable term,
  \[ \mathbb{P}\Big(R_n > 2 \frac{(3 \log n)^{1/d}}{n^{1/d}}\Big) \leq \frac{c^2}{3n^2}. \]
By Borel-Cantelli lemma, $R_n\leq 2 (3 \log n)^{1/d}/ n^{1/d}$ for all large $n$.
    
%

  

  We now show that $G_n(a,b)$ is connected when $\epsilon_n$ satisfies \eqref{rateassumption}.    Let $v_1, v_2$ be any vertices in $\mathcal{X}_n\cup \{a,b\}$, and consider the line $\ell(t) = v_1(1-t) + v_2(t)$ between them for $t\in [0,1]$.  By convexity of $D$, the path $\ell$ is contained in $D$.  Consider points on the path $v_1=\ell(0), \ell(R_n), \ell(2R_n),\ldots, \ell(kR_n), \ell(1)=v_1$, where $k= \lfloor |v_2-v_1|/R_n \rfloor$ so that $|1-kR_n|\leq R_n$.  Each point $y=\ell(jR_n)$ is within Euclidean distance $R_n$ of a point $u_j\in \mathcal{X}_n$ by the `$R_n$-limit' a.s. for all large $n$.  By construction, for $1\leq j\leq k-1$, the Euclidean distance between $u_{j}$ and $u_{j+1}$ is less than sum of the distances, from $u_j$ to $\ell(jR_n)$, from $\ell(jR_n)$ to $\ell((j+1)R_n)$, and from $\ell((j+1)R_n)$ to $u_{j+1}$, which is bounded by $3R_n$.  Similarly, the endpoints $v_1$, $v_2$ are within Euclidean distance $2R_n$ of $u_1$ and $u_k$ respectively.  Since, by \eqref{rateassumption}, $3R_n/\epsilon_n<1$ for all large $n$, the path along vertices $\{v_1, u_1,\ldots, u_k, v_2\}$ belongs to $V_n(a,b)$ and so $v_1$ and $v_2$ are connected in $G_n(a,b)$ a.s. for all large $n$.  
%
\end{proof}

\subsection{Existence of Quasinormal Minimizers}

We discuss a `conservation law' for $F$-minimizing paths, and existence of $F$-minimizing Lipschitz paths, following the treatment in \cite{buttazzo1998one}.

\begin{prop} \label{existence} Consider the integral functional
  $ F(\gamma) = \int_0^1 f(\gamma,\dot{\gamma})\, dt $, where $f$ satisfies (A0)-(A3). 
Then, $F$  attains a minimum on the set $\Omega(a,b)$ of Lipschitz paths from $a$ to $b$. In other words, there exists a $\gamma^* \in \Omega(a,b)$ with $F(\gamma^*) = \inf_{\gamma \in \Omega(a,b)} F(\gamma)$.

  In the case that $p=1$, there exists a $\gamma \in \argmin F$ and constants $c, c_1,c_2$ such that 
  \begin{equation} \label{conservelaw}
      f(\gamma(t),\dot{\gamma}(t)) = c \hspace{0.5cm} \text{a.e. } t \in [0,1],
  \end{equation}
  and 
  \begin{equation} \label{derivbounds}
    c_1 \leq |\dot{\gamma}(t)| \leq c_2 \hspace{0.5cm} \text{a.e. } t \in [0,1].
  \end{equation}
  If $p > 1$, then there are constants $c,c_1,c_2$ such that \eqref{conservelaw} and \eqref{derivbounds} hold for any $\gamma \in \argmin F$.
\end{prop}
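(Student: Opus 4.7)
The plan is to handle existence first by the direct method of the calculus of variations, and then derive the conservation law \eqref{conservelaw} and derivative bounds \eqref{derivbounds} by separate arguments for $p>1$ and $p=1$, with the latter exploiting reparametrization invariance.

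For existence, I would take a minimizing sequence $\{\gamma_n\}\subset \Omega(a,b)$ with $F(\gamma_n)\to \inf_{\Omega(a,b)}F$. By \eqref{fbounds}, $\int_0^1|\dot\gamma_n|^p\,dt \leq m_1^{-1}F(\gamma_n)$ is uniformly bounded. When $p>1$, Hölder's inequality (as in \eqref{equicontinuitybound} in the proof of Lemma \ref{compactness}) gives the equicontinuity bound $|\gamma_n(s)-\gamma_n(t)|\leq C|s-t|^{1/q}$ with $1/p+1/q=1$, so by Arzela--Ascoli we extract a uniformly convergent subsequence with limit $\gamma^*\in \Omega(a,b)$. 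Combined with the weak $W^{1,p}$ compactness, the lower semicontinuity of $F$ under uniform convergence with bounded $W^{1,p}$ norm (Lemma \ref{liminf}) gives $F(\gamma^*)\leq \liminf F(\gamma_n) = \inf F$, so $\gamma^*\in\argmin F$. When $p=1$, the $L^1$ bound on $\dot\gamma_n$ does not yield equicontinuity, but by $1$-homogeneity of $f$ we may reparametrize each $\gamma_n$ by normalized $f$-arclength (or Euclidean arclength) without changing $F(\gamma_n)$; this produces a new minimizing sequence with uniform Lipschitz bound, after which Arzela--Ascoli and lower semicontinuity conclude as before.

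For the conservation law \eqref{conservelaw} when $p>1$, the extension of $f$ to a $C^1$ function on $D\times\mathbb{R}^d$ allows use of the Euler--Lagrange equation; since $f$ has no explicit dependence on $t$, the DuBois--Reymond form of Noether's theorem gives that the Hamiltonian $H(t) = \langle \dot\gamma(t),\nabla_v f(\gamma,\dot\gamma)\rangle - f(\gamma,\dot\gamma)$ is constant along minimizers. Applying Euler's identity for $p$-homogeneous functions, $\langle v,\nabla_v f(x,v)\rangle = p\,f(x,v)$, yields $H = (p-1)f(\gamma,\dot\gamma)$, so $f(\gamma,\dot\gamma)$ is constant a.e. for any minimizer. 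When $p=1$, the Hamiltonian identically vanishes and this argument breaks down; instead, I invoke the reparametrization invariance proved at \eqref{p=1triangle}: given any minimizer $\gamma$, define $s(t) = F(\gamma)^{-1}\int_0^t f(\gamma(r),\dot\gamma(r))\,dr$, which is nondecreasing with $s(0)=0, s(1)=1$. After passing to a suitable inverse (possibly with minor technical modification if $s$ is not strictly increasing, handled by the standard trick of adding a small multiple of $t$ and taking limits), the path $\tilde\gamma(t) = \gamma(s^{-1}(t))$ is still a minimizer and satisfies $f(\tilde\gamma,\dot{\tilde\gamma}) = F(\gamma)$ a.e.

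The derivative bounds \eqref{derivbounds} then follow immediately from (A3): with $c = f(\gamma,\dot\gamma)$ constant a.e., the inequality $m_1|\dot\gamma|^p \leq c \leq m_2|\dot\gamma|^p$ gives $(c/m_2)^{1/p}\leq |\dot\gamma|\leq (c/m_1)^{1/p}$, noting $c>0$ whenever $a\neq b$ (since $m_1|b-a|^p \leq d_f(a,b) = c$). I expect the most delicate step to be the $p=1$ existence argument combined with the reparametrization to obtain \eqref{conservelaw}: one must carefully handle the case where $f(\gamma,\dot\gamma)$ vanishes on a set of positive measure, which would make $s$ non-invertible, and verify that the reparametrized path remains in $\Omega(a,b)$ and achieves the same $F$-value; this is the classical subtlety addressed in detail in \cite{buttazzo1998one}, to which I would defer for the technical details.
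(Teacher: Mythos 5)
Your argument for the $p>1$ case follows the same path as the paper: existence by the direct method via weak $W^{1,p}$ compactness and convexity-based lower semicontinuity, and the conservation law \eqref{conservelaw} via inner variations (DuBois--Reymond) combined with Euler's identity for $p$-homogeneous functions, yielding $(p-1)f(\gamma,\dot\gamma)=\mathrm{const}$; the bounds \eqref{derivbounds} then drop out of (A3). That part is correct and essentially identical.

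For $p=1$ your route is genuinely different from the paper's, and it is worth noting what each buys. You attack the two obstructions directly with the $1$-homogeneity: for compactness of a minimizing sequence you renormalize to constant Euclidean speed (whose Lipschitz constant is controlled by $m_1^{-1}F(\gamma_n)$, via (A3)), so the reparametrized sequence lies in a bounded ball of $W^{1,\infty}$ and Arzel\`a--Ascoli plus Lemma~\ref{liminf} close the existence argument; for the conservation law you reparametrize a minimizer by normalized $f$-arclength, which by the $1$-homogeneity computation produces a quasinormal minimizer. The paper instead cites Theorem~1 of \cite{hildebrandt} (alternatively Theorem~5.22 of \cite{buttazzo1998one}), which establishes both facts at once by comparing $F$ with the quadratic functional $Q(\gamma)=\int_0^1 f^2(\gamma,\dot\gamma)\,dt$: since $f^2$ is $2$-homogeneous, $Q$ enjoys the $p>1$ compactness, and Cauchy--Schwarz forces $Q$-minimizers to be quasinormal $F$-minimizers. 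The two approaches are philosophically close (both convert the degenerate $p=1$ problem into a coercive one by exploiting reparametrization invariance), but yours makes the mechanism more explicit while the paper's hides it inside the cited theorem. The point you flag as the ``most delicate step'' --- legitimacy of the arclength/quasinormal reparametrization when $f(\gamma,\dot\gamma)$ vanishes on a positive-measure set, and the invariance of $F$ under such a non-bijective reparametrization --- is indeed the place where real care is required (one must use the Lipschitz estimate $|\dot{\tilde\gamma}|\le m_1^{-1}F(\gamma)$ and a monotone change-of-variables argument rather than the smooth-diffeomorphism computation in \eqref{p=1triangle}); you correctly defer this to \cite{buttazzo1998one}, as the paper itself defers the whole $p=1$ case. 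So both proofs lean on the same reference for the $p=1$ technicalities, but your outline reveals more of what is actually going on.
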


\begin{proof} We first give an argument in the case where $p > 1$.
  Note, by assumption, the integrand
  $f$ is continuous and $C^1$ on
  $D\times (\mathbb{R}^d\setminus\{0\})$, convex and $p$-homogenous in
  the second argument, and satisfies \eqref{fbounds}.  As $p>1$, $f$
  may be extended continuously to a $C^1$ function on
  $D\times \mathbb{R}^d$.
  
  When the domain of $F$ is extended to all Sobolev paths
  $\gamma \in W^{1,p}([0,1]; D)$ with $\gamma(0) = a, \gamma(1) = b$,
  the existence of a minimizer follows from Remark 2 of Section 3.2 in
  \cite{buttazzo1998one}. In particular, the continuity and convexity
  assumptions (A0) and (A1) imply that $F$ is lower-semicontinuous
  with respect to weak convergence of Sobolev functions. The
  existence of a minimizer then follows from a standard compactness
  argument.
  
  Let $\gamma$ denote such a Sobolev minimizer. Consider now an
  `inner variation' $\omega(t,\epsilon) = \gamma(\xi(t,\epsilon))$
  of $\gamma$, where $\xi$ is $C^1$ on
  $[0,1]\times (-\epsilon_0,\epsilon_0)$ for some $\epsilon_0 > 0$ and
  $\xi(\cdot,\epsilon)$ is a $C^1$ diffeomorphism of the interval
  $[0,1]$ to itself, with $\gamma(t,0) = t$. It may be shown (see the
  discussion on pages 19-21, Proposition 1.16 and Remark 3 in Section
  1.1 of \cite{buttazzo1998one}) that the optimality condition
  $\frac{d}{d \epsilon} F(\omega(\cdot,\epsilon)) |_{\epsilon = 0}$
  over the class of inner variations, together with Euler's identity
  for homogenous functions, $v\cdot\nabla_v f(x,v) = pf(x,v)$, together imply
  that
  \begin{equation} \label{conserveintermediate}
    (p-1)f(\gamma(t), \dot\gamma(t)) = c \ \ \ {\rm a.e.} \ t,
    \end{equation}
    for some constant $c$.

    Finally, by assumption (A3) on $f$, it follows that $c > 0$ and there exist constants $c_1,c_2 > 0$ with $c_1 \leq |\dot{\gamma}(t)| \leq c_2$ for almost every $t$.  In particular, $\gamma\in \Omega(a,b)$, and the proposition is proved for $p > 1$.

  The argument for the $p = 1$ case is complicated by a lack of
  compactness with respect to weak convergence in the Sobolev space
  $W^{1,1}([0,1];D)$, as well as difficulty in establishing an analogue of \eqref{conserveintermediate}.  By a more involved argument, relating optimizers of $F$
  to optimizers of the quadratic functional
  $Q(\gamma) \coloneqq \int_0^1 f^2(\gamma,\dot\gamma)\, dt$, the
  existence of a Lipschitz path $\gamma \in \argmin F$ satisfying
  \eqref{conservelaw} is established in Theorem 1 of
  \cite{hildebrandt} (see also Theorem 5.22 of
  \cite{buttazzo1998one} which gives an alternative argument). From this and assumption (A3), inequality \eqref{derivbounds} follows.
\end{proof}

 \vskip .2cm
\noindent {\bfseries Acknowledgement.}   
This work was partially supported by ARO W911NF-14-1-0179.
\vskip .1cm

\end{document}